\DeclareSymbolFontAlphabet{\mathbbm}{bbold}
\DeclareSymbolFontAlphabet{\mathbb}{AMSb}%
\numberwithin{equation}{section}
\theoremstyle{plain}
\newtheorem{theorem}{Theorem}[section]
\newtheorem{recursionstep}[theorem]{Recursion Step}
\newtheorem{recursiveconstruction}[theorem]{Recursive Construction}
\newtheorem{prop}[theorem]{Proposition}
\newtheorem{cor}[theorem]{Corollary}
\newtheorem{lemma}[theorem]{Lemma}
\theoremstyle{definition}
\newtheorem{defn}[theorem]{Definition}
\newtheorem{example}[theorem]{Example}
\theoremstyle{remark}
\newtheorem{remark}[theorem]{Remark}
\newcommand{\To}{\longrightarrow}
\newcommand{\otau}{\overline\tau}
\newcommand{\trans}{\operatorname{\mathbb T}}
\newcommand{\Hom}{\operatorname{Hom}}
\newcommand{\x}{\times}
\newcommand{\Q}{Q}
\newcommand{\inv}{^{-1}}
\newcommand{\pr}{\mathrm{pr}}
\newcommand{\R}{\mathcal{R}}
\newcommand{\C}{\mathbb{C}}
\renewcommand{\Q}{\mathbb{Q}}
\newcommand{\Z}{\mathbb{Z}}
\newcommand{\Newton}{\operatorname{Newton}}
\newcommand{\Coeff}{\operatorname{Coeff}}
\newcommand{\Log}{\operatorname{LogCoeff_T}}
\newcommand{\AugNewton}{\operatorname{AugNewton}}
\newcommand{\ConvexHull}{\operatorname{ConvexHull}}
\newcommand{\Span}{\operatorname{Span}}
\newcommand{\Image}{\operatorname{im}}
\newcommand{\crit}{d_{\operatorname{crit}}}
\newcommand{\pcrit}{p_{\operatorname{crit}}}
\newcommand{\wcrit}{w_{\operatorname{crit}}}
\newcommand{\mon}{\operatorname{mon}}
\newcommand{\lowestface}{\operatorname{LowestFace}}
\newcommand{\dcoeff}{d_{\operatorname{coeff}}}
\newcommand{\AugDelta}{\mathbbm{\Delta}}
\newcommand{\NDelta}{\Delta}
\newcommand{\m}{\mathfrak m}
                                                                                                                                                                                                                                                                                                                                                                                                                                                                                                                                                                                                                                                                                                                                                                                                                                                                                                                                                                                                                                                                                                                                                                                                                                                                                                                                                                                                                                                                                                                                                                                                                                                                                                                                                                                                                                                                                                                                                                                                                                                                                                                                                                                                                                                                                                                                                                                                                                                                                                                                                                                                                                                                                                                                                                                                                                                                                                                                                                                                                                                                                                                                                                                                                                                                                                                                                                                                                                                                                                                                                                                                                                                                                                                                                                                                                                                                                                                                                                                                                                                                                                                                                                                                                                                                                                                                                                                                                                                                                                                                                                                                                                                                                                                                                                                                                                                                                                                                                                                                                                                                                                                                                                                                                                                                                                                                                                                                                                                                                                                                                                                                                                                                                                                                                                                                                                                                                                                                                                                                                                                                                                                                                                                                                                                                                                                                                                                                                                                                                                                                                                                                                                                                                                                                                                                                                                                                                                                                                                                                                                                                                                                                                                                                                                                                                                                                                                                                                                                                                                                                                                                                                                                                                                                                                                                                                                                                                                                                                                                                                                                                                                                                                                                                                                                                                                                                                                                                                                                                                                                                                                                                                                                                                                                                                                                                                                                                                                                                                                                                                                                                                                                                                                                                                                                                                                                                                                                                                                                                                                                                                                                                                                                                                                                                                                                                                                                                                                                                                                                                                                                                                                                                                                                                                                                                                                                                                                                                                                                                                                                                                                                                                                                                                                                                                                                                                                                                                                                                                                                                                                                                                                                                                                                                                                                                                                                                                                                                                                                                                                                                                                                                                                                                                                                                                                                                                                                                                                                                                                                                                                                                                                                                                                                                                                                                                                                                                                                                                                                                                                                                                                                                                                                                                                                                                                                                                                                                                                                                                                                                                                                                                                                                                                                                                                                                                                                                                                                                                                                                                                                                                                                                                                                                                                                                                                                                                                                                                                                                                                                                                                                                                                                                                                                                                                                                                                                                                                                                                                                                                                                                                                                                                                                                                                                                                                                                                                                                                                                                                                                                                                                                                                                                                                                                                                                                                                                                                                                                                                                                                                                                                                                                                                                                                                                                                                                                                                                                                                                                                                                                                                                                                                                                                                                                                                                                                                                                                                                                                                                                                                                                                                                                                                                                                                                                                                                                                                                                                                                                                                                                                                                                                                                                                                                                                                                                                                                                                                                                                                                                                                                                                                                                                                                                                                                                                                                                                                                                                                                                                                                                                                                                                                                                                                                                                                                                                                                                                                                                                                                                                                                                                                                                                                                                                                                                                                                                                                                                                                                                                                                                                                                                                                                                                                                                                                                                                                                                                                                                                                                                                                                                                                                                                                                                                                                                                                                                                                                                                                                                                                                                                                                                                                                                                                                                                                                                                                                                                                                                                                                                                                                                                                                                                                                                                                                                                                                                                                                                                                                                                                                                                                                                                                                                                                                                                                                                                                                                                                                                                                                                                                                                                                                                                                                                                                                                                                                                                                                                                                                                                                                                                                                                                                                                                                                                                                                                                                                                                                                                                                                                                                                                                                                                                                                                                                                                                                                                                                                                                                                                                                                                                                                                                                                                                                                                                                                                                                                                                                                                                                                                                                                                                                                                                                                                                                                                                                                                                                                                                                                                                                                                                                                                                                                                                                                                                                                                                                                                                                                                                                                                                                                                                                                                                                                                                                                                                                                                                                                                                                                                                                                                                                                                                                                                                                                                                                                                                                                                                                                                                                                                                                                                                                                                                                                                                                                                                                                                                                                                                                                                                                                                                                                                                                                                                                                                                                                                                                                                                                                                                                                                                                                                                                                                                                                                                                                                                                                                                                                                                                                                                                                                                                                                                                                                                                                                                                                                                                                                                                                                                                                                                                                                                                                                                                                                                                                                                                                                                                                                                                                                                                                                                                                                                                                                                                                                                                                                                                                                                                                                                                                                                                                                                                                                                                                                                                                                                                                                                                                                                                                                                                                                                                                                                                                                                                                                                                                                                                                                                                                                                                                                                                                                                                                                                                                                                                                                                                                                                                                                                                                                                                                                                                                                                                                                                                                                                                                                                                                                                                                                                                                                                                                                                                                                                                                                                                                                                                                                                                                                                                                                                                                                                                                                                                                                                                                                                                                                                                                                                                                                                                      \renewcommand{\R}{\mathbb R}
\newcommand{\MonSeq}{\operatorname{MonSeq}}
\newcommand{\KK}{\mathbf K}
\newcommand{\ValK}{\operatorname{Val}_{\KK}}
\newcommand{\VALVK}{\operatorname{Val}_{{V_{\KK}}}}
\newcommand{\Val}{\operatorname{Val_T}}
\newcommand{\VAL}{\operatorname{Val}}
\newcommand{\Trop}{\operatorname{Trop}}
\tikzset{labl/.style={anchor=south, rotate=90, inner sep=.5mm}}
\title[A positive/tropical critical point theorem]{A positive/tropical critical point theorem and mirror symmetry}
\author{Jamie Judd and Konstanze Rietsch}
\begin{document}

\begin{abstract}
Call a Laurent polynomial $W$ `complete' if its Newton polytope is full-dimensional with zero in its interior. Suppose $W$ is a Laurent polynomial with coefficients in the positive part of the field  of (generalised) Puiseaux series. Here a Puiseaux or generalised Puiseux series (with exponents in $\R$) is called `positive' if the coefficient of its leading term is in $\R_{>0}$. We show that $W$ has a unique {\it positive} critical point $\pcrit$, i.e. all of whose coordinates are positive, if and only if $W$ is complete.  For any complete, positive Laurent polynomial $W$ in $r$ variables we also obtain from its positive critical point $\pcrit$ a canonically associated `tropical critical point' $\crit\in \R^{r}$ by considering the valuations of the coordinates of $\pcrit$. Moreover we give a finite recursive construction of $\crit$ in terms of a generalisation of the Newton polytope that we call the `Newton datum' of $W$. 

 We show that this result is compatible with a general form of mutation, so that it can be applied in a cluster varieties setting. 
We also show that our theorem carries over to the case where the exponents of monomials appearing in $W$ are not integral but in $\R$, even though $W$ is then no longer Laurent. 
 
Finally, we describe applications to both algebraic and symplectic toric geometry inspired by mirror symmetry. 
On the one hand, in the algebraic context of a complete toric variety $X_{\Sigma}$ we apply our results to obtain for any divisor class $[D]$ satisfying a certain integrality property, a canonical choice of torus-invariant representative. This  generalises the standard toric boundary divisor of $X_\Sigma$ to divisor classes other than the anti-canonical class.    
On the other hand, our result generalises a result of \cite{FOOO:I} and relates to the construction of canonical non-displaceable Lagrangian tori for toric symplectic orbifolds using \cite{FOOO:survey,Woodward:11}.
\end{abstract}

\maketitle

\section{Introduction}

\subsection{}\label{s:intro1} We begin by giving a concrete statement of our main result. Consider the field $\KK$ of generalised Puiseaux series, whose elements are essentially Laurent series in one variable $t$ but with $\R$-exponents that tend to $+\infty$. This field has an $\R$-valued valuation, and it has a positive part $\KK_{>0}$ consisting of those series whose leading term coefficients are positive. Let us write $x^m$  for the Laurent monomial $x_1^{m_1}\dots x_r^{m_r}$, where $m\in\mathbb Z^r$. 

\begin{theorem}\label{t:mainintro} Let $W=\sum \gamma_i x^{v_i}$ be a Laurent polynomial satisfying the  following two properties. 
\begin{enumerate}
\item 
The coefficients $\gamma_i$ lie in $\KK_{>0}$ (`positivity').
\item   
The Newton polytope of $W$, that is the convex hull of $\{v_1,\dots, v_n\}$, is full-dimensional with zero in the interior (`completeness').
\end{enumerate} 
Then $W$ has a unique critical point in $(\KK_{>0})^r$. We call this point the \emph { positive critical point} of $W$.
\end{theorem}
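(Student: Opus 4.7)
The plan is to reduce the statement to a strict-convexity argument in the classical real-positive setting, and then to bootstrap the conclusion to $\KK_{>0}$ by an order-by-order analysis with respect to the valuation.

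I would first handle the real case where each $\gamma_i\in\R_{>0}$. The substitution $x_j=e^{y_j}$ identifies positive critical points of $W$ with critical points on $\R^r$ of
\[
\tilde W(y)=\sum_i \gamma_i \exp(\langle v_i,y\rangle).
\]
The Hessian of $\tilde W$ is $\sum_i \gamma_i \exp(\langle v_i,y\rangle)\, v_i v_i^{T}$, a nonnegative combination of rank-one matrices. Since the Newton polytope is full-dimensional and contains $0$ in its interior, $\{v_i\}$ spans $\R^r$, so the Hessian is positive definite and $\tilde W$ is strictly convex. Completeness also guarantees that for every $y\neq 0$ some $\langle v_i,y\rangle>0$, whence $\tilde W(y)\to+\infty$ as $\|y\|\to\infty$. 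Strict convexity together with properness yields a unique global minimum, and hence a unique positive real critical point.

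To transfer this to $\KK_{>0}$ I would use the tropical critical point $\crit\in\R^r$ constructed elsewhere in the paper. The natural substitution is $x_j=t^{\crit_j}z_j$: after factoring out the minimal power of $t$ from each critical equation $x_j\partial_j W=0$, one obtains leading equations $F_j(z)=0$ with real positive coefficients whose monomial support consists of those $v_i$ minimising $\langle v_i,\crit\rangle+\VAL(\gamma_i)$. The recursive construction of $\crit$ is arranged precisely so that the $F_j$ form the critical system of a strictly convex exponential sum of the kind treated above, yielding a unique real positive leading solution $z^{(0)}$. Higher-order corrections are produced by a Newton-style recursion: the linearised system at each stage is governed by the Hessian of that leading sum at $z^{(0)}$, which is positive definite and hence invertible, and each correction has strictly greater $t$-valuation than its predecessor. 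This assembles $\pcrit\in(\KK_{>0})^r$, provided one checks that the exponents produced accumulate only at $+\infty$ so that the resulting series lies in $\KK$.

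Uniqueness follows by the same mechanism: any $p\in(\KK_{>0})^r$ solving the critical equations must have the valuation vector dictated by $\crit$ (otherwise the terms of minimal valuation in some $x_j\partial_j W$ cannot cancel), its leading coefficient is then forced to be $z^{(0)}$ by real-case uniqueness, and each higher term is forced by the invertibility of the Hessian. The principal obstacle I anticipate is exactly this bookkeeping --- ensuring that at every stage the leading problem genuinely falls within the strict-convexity framework, and that the accumulating $t$-exponents produce an honest element of $\KK$ rather than a merely formal series. The analytic substance is the strict convexity of the first step; the paper's finite recursive construction of $\crit$ does the combinatorial work of organising the faces of the Newton polytope so that this analytic input applies uniformly throughout the recursion.
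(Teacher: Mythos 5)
Your real case is fine (it is exactly Galkin's Proposition~\ref{p:Galkin}/Lemma~\ref{l:GalkinGen}, which the paper also uses), and the overall leading-term-plus-corrections strategy is the same in spirit as the paper's. But there is a genuine gap where you claim that the one-step substitution $x\mapsto t^{\crit}x$ produces leading equations that ``form the critical system of a strictly convex exponential sum'' with positive definite, hence invertible, Hessian. The monomials contributing at leading order are precisely those with $\delta_i(\crit)=0$, and the tropical critical conditions only guarantee that $0$ lies in the relative interior of the convex hull of these $v_i$ \emph{inside their span} $\Span(\{v_i\mid\delta_i(\crit)=0\})$, which is a proper subspace of $M_\R$ whenever the minimal face of $\AugNewton(W)$ above $(\tau,0)$ has codimension greater than one (compare Remark~\ref{r:facetcase} and Lemma~\ref{l:codim1case}, which isolate the codimension-one case in which your picture is accurate). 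In the general case the leading exponential sum is strictly convex only in those directions: its Hessian is degenerate on the orthogonal complement of that span, the leading coefficient $z^{(0)}$ is determined only modulo those directions, and your Newton-style linearisation is not invertible. The missing components are fixed only by the higher strata $\varepsilon_1<\cdots<\varepsilon_m$, which is exactly why the paper introduces the nested subspaces $B_{<\varepsilon}$, imposes the critical coefficient conditions (Definition~\ref{d:coeffcritcond}), and solves a hierarchy of Galkin-type convex problems on the quotients $B^{\perp}_{<\varepsilon}/B^{\perp}_{\le\varepsilon}$ (Proposition~\ref{p:coefficient}); likewise the higher-order corrections are produced using the graded bilinear forms $\mathbb B_h$ on these quotients (Proposition~\ref{p:recursion}), not a single invertible Hessian.

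The same issue affects your uniqueness argument: ``the leading coefficient is forced by real-case uniqueness'' is only true stratum by stratum, and ``each higher term is forced by invertibility of the Hessian'' requires the graded nondegeneracy statement (the paper's Lemma~\ref{l:wvanishing}), since the honest Hessian at the critical point is degenerate at leading $t$-order in the directions orthogonal to the span of the $\delta_i=0$ monomials. Two further points you flag but do not resolve are genuinely nontrivial in the paper: that the exponents generated by the recursion accumulate only at $+\infty$, so that the limit is an element of $\KK$, is proved by showing they lie in a finitely generated semigroup built from the exponent sets of the $\gamma_i$ and the differences of the $\varepsilon_j$ (Lemma~\ref{l:convergence}); and the assertion that any positive critical point must have valuation $\crit$ requires the tropical critical conditions together with their uniqueness (Lemmas~\ref{l:btropcrit} and~\ref{l:uniqueness}), since minimal-valuation cancellation alone does not pin down the valuation when the lowest face is not a facet.
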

Moreover this theorem is optimal in the sense that any positive Laurent polynomial with a unique positive critical point must be complete, see Corollary~\ref{c:converse}.

Taking the valuation of the positive critical point $\pcrit$ we obtain a distinguished point $d_{\rm crit}$ in $\R^r$ associated to the positive, complete  Laurent polynomial $W$. We call this distinguished point the \emph{tropical critical point} of $W$.

Consider the tropicalization of $W=\sum \gamma_i x^{v_i}$ which is, concretely, the piecewise affine function
\[
\Trop(W)(d)=\min_{i}(\{c_i+\langle v_i,d\rangle\}),
\]
where $c_i=\VAL_{\KK}(\gamma_i)$ and $\langle \, ,\, \rangle$ is the standard inner product on $\R^r$.
Associated to $\Trop(W)$ we have the following subset of $\R^r$, 
\begin{equation}\label{e:PWintro}
\mathcal P_{W}=\{d\in \R^r\mid \Trop(W)(d)\ge 0\}.
\end{equation}
This set is either empty or it is a convex polytope. The second aspect of our result is the following. 

\begin{theorem}\label{t:main2intro}  Suppose $W$ is a complete, positive Laurent polynomial over $\KK$ with tropical critical point $\crit\in \R^r$. Then $\Trop(W)(\crit)$ is the maximal value of $\Trop(W)$. 
In particular whenever $\mathcal P_{W}$ is nonempty then $\crit$ lies in its relative interior.   
\end{theorem}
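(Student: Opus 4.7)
The plan is to extract leading-order (in $t$) information from the critical equations at $\pcrit$ and translate it into a convex-geometric statement. First, multiplying the $j$-th critical equation $\partial W/\partial x_j(\pcrit)=0$ by $\pcrit_j$ and assembling the $r$ resulting scalars into a single vector identity gives
\[
\sum_i \gamma_i\, \pcrit^{v_i}\, v_i = 0 \quad\text{in } \KK^r.
\]
Each $\gamma_i\pcrit^{v_i}$ is a positive element of $\KK$ of valuation $c_i+\langle v_i,\crit\rangle$, so its leading coefficient (at $t^m$, where $m:=\Trop(W)(\crit)$) is a strictly positive real number $a_i$ when $i$ lies in $I(\crit):=\{i:c_i+\langle v_i,\crit\rangle=m\}$, and vanishes otherwise. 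Reading off the $t^m$-coefficient of the vector identity therefore produces
\[
\sum_{i\in I(\crit)} a_i\, v_i = 0 \quad\text{in } \R^r, \qquad a_i>0.
\]

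Next I would use this linear dependence to prove the maximum property. The identity exhibits $0$ as a strictly positive convex combination of $\{v_i:i\in I(\crit)\}$, and in particular as a point in the relative interior of $\operatorname{conv}\{v_i:i\in I(\crit)\}$. Since $\Trop(W)$ is concave (as a pointwise minimum of affine functions) with superdifferential at any $d$ equal to $\operatorname{conv}\{v_i:i\in I(d)\}$, the condition that $0$ lies in the superdifferential at $\crit$ is precisely the first-order characterisation of a global maximum, which establishes the main assertion.

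For the `in particular' statement, if $\mathcal P_W$ is non-empty then $\max\Trop(W)\ge 0$ and so $\crit\in\mathcal P_W$. When $\Trop(W)(\crit)>0$, continuity of $\Trop(W)$ supplies a full Euclidean neighbourhood of $\crit$ inside $\mathcal P_W$. In the boundary case $\Trop(W)(\crit)=0$, $\mathcal P_W$ coincides with the maximiser set; writing $L$ for the linear span of $\{v_i:i\in I(\crit)\}$, the strict positivity of the $a_i$ shows that for any $u\in L\setminus\{0\}$ the identity $\sum_{i\in I(\crit)} a_i\langle v_i,u\rangle=0$ forces some $\langle v_{i_0},u\rangle<0$, so $\Trop(W)$ strictly decreases along $u$ out of $\crit$, while any $u\in L^{\perp}$ leaves all active affine values unchanged. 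Hence the affine hull of $\mathcal P_W$ near $\crit$ is $\crit+L^{\perp}$ and $\crit$ lies in its relative interior.

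The delicate point is securing the strict positivity $a_i>0$ in the key identity: this is exactly what distinguishes the positive critical point from an arbitrary critical point, and both the positivity of $\gamma_i$ and of the components of $\pcrit$ are needed to prevent leading-order cancellations that would otherwise spoil the relative-interior conclusion.
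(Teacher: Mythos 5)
Your proposal is correct, and its first half coincides with the paper's key step: extracting the $t^{m}$-coefficient of the vector identity $\sum_i \gamma_i \pcrit^{v_i} v_i=0$ and using positivity of $\gamma_i$ and of $\pcrit$ to get $\sum_{i\in I(\crit)} a_i v_i=0$ with $a_i>0$ is exactly the $\varepsilon=0$ case of the paper's Lemma~\ref{l:btropcrit} (the ``tropical critical condition'' $0\in\ConvexHull^\circ\{v_i\mid\delta_i(\crit)=0\}$). Where you diverge is in how this is converted into maximality: the paper passes through the augmented Newton polytope $\AugNewton(W)$, the minimal height $\tau$ above $0$, and lowest faces (Lemmas~\ref{l:max}, \ref{l:tauattainment}, \ref{l:tropcritmax}, or, in the proof it actually cites, Lemma~\ref{l:critmax} via the canonical-point construction and the hyperplane $\widetilde E$), whereas you invoke the standard convex-analysis fact that $\Trop(W)$ is concave with superdifferential $\ConvexHull\{v_i\mid i\in I(d)\}$ at $d$, so $0$ in the superdifferential at $\crit$ is the first-order criterion for a global maximum. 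Your route is shorter and more elementary, but it presupposes Theorem~\ref{t:mainintro} (existence of $\pcrit$), while the paper's machinery is designed to also \emph{construct} $\crit$ and identify $\max\Trop(W)$ with the combinatorial quantity $\tau$, which is needed elsewhere (in the recursion and in the proof of existence itself). Your treatment of the ``in particular'' clause is a genuine addition: the paper leaves the relative-interior statement implicit, while you prove it cleanly, using strict positivity of the $a_i$ in the boundary case $\Trop(W)(\crit)=0$ to show $\mathcal P_W\subseteq \crit+L^{\perp}$ (any $d\in\mathcal P_W$ has $\langle v_i,d-\crit\rangle\ge 0$ for active $i$, and $\sum a_i\langle v_i,d-\crit\rangle=0$ forces equality) together with a relatively open neighbourhood of $\crit$ in $\crit+L^{\perp}$ lying in $\mathcal P_W$; only your phrase ``affine hull of $\mathcal P_W$ near $\crit$'' should be tightened to this global containment, which your own inequality already gives.
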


By Theorem~\ref{t:main2intro}, if $\Trop(W)$ attains its maximum at a unique point, then this point is $\crit$. However even if not, our work includes an  explicit recursive construction of the tropical critical point for any $W$.
To this end we define a `Newton datum' in Section~\ref{s:critconstr}, which we think of as encoding similar but more detailed information than the Newton polytope of $W$. The same Section~\ref{s:critconstr} also contains the construction of $\crit$ in terms of the Newton datum of $W$. This construction is illustrated in Example~\ref{ex:GWexample}.

After proving the main Theorems~\ref{t:mainintro} and \ref{t:main2intro}, and explicitly constructing $\crit$ we extend and strengthen our results in a variety of ways. We show that the positive critical point is defined over Puiseaux series whenever the Laurent polynomial is, so that in particular the tropical critical point is rational in that case. We extend beyond the realm of Laurent polynomials by allowing $W$ to have monomials with rational or real exponents. Moreover we show a non-degeneracy property of the positive critical point. Finally, we describe applications of our results to toric geometry and we show that our positive critical point is compatible with cluster mutation. 
We will describe the  applications in more detail in Section~\ref{s:applicationsintro}, after first giving the background that motivated our results and connection to other work.

\subsection{}\label{s:backgroundintro}
The particular the approach to tropicalisation that we use in  this paper comes from the work of Lusztig who indexed his canonical basis of a quantum universal enveloping algebra using tropicalization and the Langlands dual flag variety \cite{Lusztig94}. Indeed there is a representation theoretic background to this paper which we now describe.

In the context of mirror symmetry for flag varieties there appears to be a favoured anti-canonical divisor -- for example the union of all Schubert and opposite Schubert divisors in a full flag variety. This leads to an interesting question which the second author was asked by Victor Ginzburg: What is the associated line in the $2\rho$-representation? (Given that the elements of the $2\rho$-representation can be interpreted as sections of the anti-canonical bundle by the Borel-Weil construction.) Indeed, from the perspective of representation theory it is surprising that there should be a distinguished line in the middle of the $2\rho$-representation.  
 
An answer to the above question of Ginzburg's is given for $SL_n/B$ in a precursor \cite{Judd:Flag} to this paper and involves computing the tropical critical point of Berenstein and Kazhdan's `potential function’ $W^{\lambda}_{BK}$ from their theory of geometric crystals \cite{BeKa}.  
Namely \cite{Judd:Flag} proves the uniqueness and existence of a positive critical point over $\KK$ for  $W=W^{\lambda}_{BK}$.\footnote{Here $\lambda$ is a dominant weight for $SL_n$ and $W^\lambda_{BK}$ agrees with the superpotential for $SL_n/B$ with quantum parameters $q_i$ set to $t^{\left<\lambda,\omega_i^\vee\right>}\in \KK$, where  $\omega_i^\vee$ is the $i^{\rm {th}}$ fundamental co-weight. In particular, it is represented by a positive Laurent polynomial with coefficients in $\KK$.} Moreover it is shown 
that  this tropical critical point for $\lambda=2\rho$ picks out a lattice point in the associated Gelfand-Zetlin polytope and precisely indexes the special basis element in the $2 \rho$-representation corresponding to the  anti-canonical divisor from above.
We note that the Berenstein-Kazhdan potential function agrees with the superpotential mirror dual to the flag variety given in \cite{R}. 
Thus this result fits into the framework of mirror symmetry. 

The second precursor to the present paper is a paper of Galkin~\cite{Galkin} related to Jacobi rings and quantum cohomology. Like our paper, Galkin's paper also generalises a result for the superpotential of a flag variety. Namely it proves the analogue of our Theorem~\ref{t:mainintro} with $\KK$ replaced by $\R$. This had previously been done specifically for the superpotential of a partial flag variety $SL_n(\C)/P$ (expressed via the Laurent polynomials from \cite{Givental:QToda,BCFKvSPF}) in \cite{rietschNagoya}. 

Thus our main result, Theorem~\ref{t:mainintro}, can be viewed as a simultaneous generalisation of  \cite{Judd:Flag} and \cite{Galkin}. We also note that Galkin's theorem is in fact an ingredient that is used in our construction of the leading term coefficient of $\pcrit$. The extension of Galkin's theorem from $\R$ to $\KK$ is key for applications analogous to Judd's work in the flag variety case, as well as further symplectic applications, see Section~\ref{s:applicationsintro}. In each of the  applications, tropicalisation via the valuation on generalised Puiseaux series plays a central role.  

After proving the Theorem we also learned of a third precursor. Namely, the problem of constructing of critical points of superpotentials over generalised Puiseaux series arises in the context the study of non-displaceable Lagrangians in compact toric manifolds \cite{FOOO:survey}. In this context our theorem strengthens a result of Fukaya-Oh-Ohta-Ono \cite[Proposition~4.7]{FOOO:I}, which deals with a similar scenario but with an additional assumption for analyticity.

\subsection{}\label{s:applicationsintro} We now outline three applications of the results from Section~\ref{s:intro1}.

\subsubsection{}\label{s:applicationsintro1}
 The first application of Theorems~\ref{t:mainintro} and \ref{t:main2intro} is an analogue for toric varieties of the result for flag varieties in \cite{Judd:Flag}. Namely if $X=X_\Sigma$ is a toric variety \cite{Fulton:toricbook} with a torus-invariant Weil divisor $D$, then one can associate to $X$ and $D$ a Laurent polynomial mirror $W=W_{(X,D)}$. Its summands correspond to primitive generators of the rays of the fan $\Sigma$, and the coefficients (in $\KK_{>0}$) depend on $D$. (See also Section~\ref{s:applicationsintro2} below for the analogous Laurent polynomial in the symplectic toric setting and related references.) If $X$ is complete then $W_{(X,D)}$ is complete and thus has a canonical positive critical point $\pcrit$. 
In Section~\ref{s:toric} we use the associated tropical critical point $\crit$ of $W_{(X,D)}$ to construct a distinguished torus-invariant divisor in the divisor class of $D$, whenever $\crit$ is a lattice point. In the case that $D$ is anti-canonical the special divisor in the class of $D$ will always be the toric boundary divisor, and this is the analogue of the special vector in the $2 \rho$-representation of Section~\ref{s:backgroundintro}.

\subsubsection{}\label{s:applicationsintro2}

The second application is related to Lagrangian torus fibers in compact symplectic toric orbifolds, see \cite{FOOO:I, FOOO:II} 
and \cite{Woodward:11}.   
Recall that a Lagrangian $L$ is called non-displaceable if $\psi(L)\cap L\ne \emptyset$ for any Hamiltonian diffeomorphism $\psi$ of $X$.  For example for $X=\mathbb C P^r$, viewed as a symplectic toric manifold with moment map $\mu:\mathbb C P^r \to \Delta_r$ given by 
\[[z_0:\dots : z_r] \mapsto   \left (\frac{|z_1|^2}{\sum_{i=0}^r |z_i|^2},\dots, \frac{|z_r|^2}{\sum_{i=0}^r |z_i|^2}\right ),\]
there is a distinguished Lagrangian torus fibre $L$  which is a {\it Clifford torus} and which is non-displaceable by \cite{BiranEntovPolterovich:04}. It is the fiber of a special point in the moment polytope $\Delta_r$, namely $L=\mu\inv(\frac 1{r+1},\dots, \frac 1{r+1})$. In particular for $r=1$ the fiber $L$ is a great circle. (Note that the great circle $L$ is readily seen to be non-displaceable since any Hamiltonian diffeomorphism $\psi$ preserves the areas of the two disks bounded by $L$, so that $\psi(L)$ cannot lie entirely on one side or the other of $L$.)

This  toric symplectic manifold (or orbifold) $X$ has a canonically associated `gauged potential' function $W_X$ due to Woodward, which is defined geometrically in terms of equivariant  holomorphic disk counts and worked out concretely in \cite{Woodward:11}. The explicit function $W_X$ is a positive Laurent polynomial and, if $X$ is projective, then it is also `complete' in the sense of Theorem~\ref{t:mainintro}. Its formula agrees with mirror symmetric superpotentials written down by  \cite{Batyrev:QCoh,Givental:Toric} and also with \cite[(5.16)]{Hori-Vafa} and the `leading order potential function' for a toric symplectic manifold of \cite{FOOO:I} -- but the Woodward gauged potential does not include the possibly infinitely many extra terms that can occur in the full potential function of \cite{FOOO:I}. We refer to \cite[Section~1]{GonzalezWoodward:Adv19} for an overview. In particular $W_X$ has the property that the polytope $\mathcal P_{W_X}$ from \eqref{e:PWintro} recovers the moment polytope $P$ of $X$. Moreover, if $X$ is a projective toric variety embedded via very ample $T$-invariant Cartier divisor $D$, then $W_X=W_{(X,D)}$ from Section~\ref{s:applicationsintro1}.

By applying our Theorems~\ref{t:mainintro} and \ref{t:main2intro} to $W_X$ we obtain a canonical point  in the  moment polytope $P$ of $X$, namely the tropical critical point $\crit$ of the Laurent polynomial $W_X$. 
 It follows by results of \cite{Woodward:11,FOOO:I, FOOO:II} that the moment map fiber $L_X:=\mu_X\inv(\crit)$ is a non-displaceable Lagrangian torus in $X$. Thus the Lagrangian $L_X$   can be thought of as a canonical `central' non-displaceable Lagrangian torus in $X$, in a sense generalising the Clifford torus $L$ in $\mathbb {C P}^r$.

In the case where $(X,\omega)$ is a  smooth symplectic toric manifold we also see that our tropical critical point $\crit$ of the leading order potential function agrees with a point $u_0$ constructed in the moment polytope $P$ in \cite{FOOO:I, McDuff:probes} which was proved to have a non-displaceable Lagrangian torus fiber in \cite{FOOO:I}. We note that this marks $u_0$ out as a very special point.
Indeed, our result implies a characterisation of $u_0$, as the valuation of the unique positive critical point of $W_X$. If $X$ is additionally monotone, meaning that the first Chern class $c_1(X)$ is a positive multiple of the class $[\omega]\in H^2(X)$, then the special point $\crit$ recovers the well-known central point of the moment polytope $P$ of $X$, see \cite{AbreuMacarini:TAMS}. Moreover in the monotone surface case the fiber of this central point is the only non-displaceable Lagrangian torus fiber in $X$ as the others can all be explicitly displaced by `probes', see  \cite{McDuff:probes}. In higher dimensions the uniqueness of the non-displaceable torus fiber is a conjecture.
For more general $X$ there can be other non-displaceable Lagrangian torus fibers, even families of them, and it would be interesting to know if the one associated to $\crit$ enjoys any other special properties that might characterise it.

We remark that this application to Lagrangians in toric symplectic manifolds also has an analogue for the flag manifold $U(n)/T_c$ which is due to Nishinou, Nohara and Ueda. Namely  the fiber under the Gelfand-Zetlin moment map of the center of a Gelfand-Zetlin polytope is a non-displaceable Lagrangian torus, see  \cite{NNU:GC}. 

\subsubsection{}\label{s:clusterintro}
Laurent polynomials arise in mirror symmetry for toric varieties, but they also play a role for more general varieties which have a toric degeneration. In this setting, because the toric degenerations will not be unique, it is natural to expect a multitude of Laurent polynomials associated to the same variety. The way one generally tries to makes sense of a plethora of Laurent polynomials that one thinks of as mirror to a single variety, is by relating these various Laurent polynomials to one another through birational transformations called {\it mutations}. This can be mutation as in the sense of cluster varieties \cite{FZ1, GHKK}; for an explicit example related to mirror symmetry see \cite{RW}.  Or it can be a more general type of mutation, as in \cite{ACGK:mutations,CKPT}. We show that as long as mutation preserves positivity and Laurentness of our function $W$, then it also preserves the positive critical point. This opens the way to an extension of the above applications from toric varieties to other settings such as non-toric Fano varieties.

\subsection{}
The paper is organised as follows. An important part of our set-up is the construction of the topicalisation of a torus via a local field with a positive semifield as used by Lusztig. This construction is laid out in Section~\ref{s:tropicalisation}.  Then in Section~\ref{s:theorems} we are able to restate the two main theorems from above in a coordinate-free way.

The longest section is Section~\ref{s:mainproof}, which is devoted to the proof of the theorems stated in Section~3. The first three subsections of Section~\ref{s:mainproof}, culminating in the key Corollary~\ref{c:valcrit}, are devoted to determining $\crit$ under the assumption that $\pcrit$ exists. Section~\ref{s:leadingcoeff} is then concerned with constructing the coefficient of the leading term of $\pcrit$. Finally Section~\ref{s:extension} shows that the leading term which was constructed in the previous subsections extends to a well-defined solution $\pcrit$ of the critical point equations of $W$, and moreover that this solution is unique. This proves the first theorem. The second theorem is deduced from results proved along the way. 

The next main section, Section~\ref{s:rationalityetc} contains the various extensions and refinements of the main Theorem.
The application to toric varieties is contained in Section~\ref{s:toric}. Then in 
Section~\ref{s:symplectic} we give the symplectic application in connection with \cite{FOOO:survey}. 
Finally, in Section~\ref{s:mutations} we show that the positive critical point is preserved under mutations. 
\subsection{Acknowledgements}
The authors thank Mohammad Akhtar for useful discussions at the start of this project. The second author also thanks Denis Auroux, Agnes Gadbled, Sergey Galkin, Yank\i\ Lekili, Dima Panov and Lauren Williams and Chris Woodward for helpful conversations.

\section{Positivity for tori and tropicalisation}\label{s:tropicalisation}

Suppose $T$ is an algebraic torus of dimension $r$ over a field $K$. Consider the character group $M:=X^*(T)$ and cocharacter group $N:=X_*(T)$ of $T$ (with the group structure written additively). We have $M\cong \Z^r$ and $N\cong \Z^r$ and a  dual pairing $\langle\ ,\ \rangle:M\x N\to\Z$ which extends to a dual pairing of the real vector spaces $M_\R=M\otimes \R$ and $N_\R=N\otimes \R$. 

Suppose for a moment that $K=\C$. Consider the category $\mathcal T$ whose objects are algebraic tori over $K$, and whose morphisms $T^{(1)}\to T^{(2)}$ are rational maps with positive real coefficients, with regard to a/any choice of bases of characters for $T^{(1)}$ and $T^{(2)}$.  
The {\it tropicalisation functor} $\Trop$ in this setting is a functor
\[
\Trop:\mathcal T\to\mathcal {PL}
\] 
from $\mathcal T$ to the category $\mathcal {PL}$ of  finite-dimensional real vector spaces with piecewise linear (PL) maps. Informally $\Trop$ associates to a torus $T$ the vector space $N_\R$, and to a subtraction-free rational map $T^{(1)}\to T^{(2)}$, the PL map $N^{(1)}_\R\to N^{(2)}_\R$ obtained in suitable coordinates by replacing multiplication by addition and addition by $\min$.   

In the following two subsections we give a more intrinsic description of tropicalisaton  following an original construction due to Lusztig~\cite{Lusztig94}.

\subsection{Positivity and the field of generalised Puiseux series}\label{s:genPuiseux}

 Let $K$ be an infinite field and assume $K$ has a `positive' subset $K_{>0}$, satisfying 
\[k, k'\in K_{>0} \implies \ k+k'\in K_{>0}, \ \ k k'\in K_{>0},\ \  1/k\in K_{>0}, \]
compare \cite[Section 2.1]{Lusztig94}. Observe that in particular $K_{>0}$ is a subgroup of the multiplicative group $K^*$ of $K$. Examples include $K=\C$ with $K_{>0}=\R_{>0}$, or $K=\C((t))$ where $K_{>0}$ consists of Laurent series with positive leading term coefficient.  Our preferred example is the following field of generalised Puiseaux series, also referred to as the `universal Novikov field' in \cite{FOOO:survey}. 

\begin{defn}[Generalised Puiseux series~\cite{Markwig:genPuiseux}]\label{d:genPuiseux}
Let $\mathbf K$ denote the field of {\it generalised 
Puiseux series} in a variable $t$. These are series whose exponent sets are described by
\[
\MonSeq=\{ A\subset \R \mid  \operatorname{Cardinality}(A\cap \R_{\le x})<\infty \text{ for arbitrarily large $x\in \R$} \}.
\]
In other words the exponent sets $A\in \operatorname{MonSeq}$ may be thought of as strictly monotone sequences which are either finite, or  are countable and tending to infinity. We write $(\mu_k) \in \MonSeq$ if $(\mu_k)_k=(\mu_0, \mu_1,\mu_2,\dotsc )$ is such a strictly monotone increasing sequence, and then
\begin{equation}\label{e:K}
\KK=\left\{c(t)=\sum_{(\mu_k) \in \MonSeq} c_{\mu_k} t^{\mu_k}\mid c_{\mu_k}\in\C \right\}.
\end{equation}
The field $\KK$ has a positive subset in the above sense given by
\begin{equation}\label{e:Kpos}
\KK_{>0}=\left\{c(t)\in \KK\mid c(t)=\sum_{(\mu_k) \in \MonSeq} c_{\mu_k} t^{\mu_k} , \ c_{\mu_0}\in \R_{>0} \right\}.
\end{equation}
Consider the  valuation, 
$%
\ValK:\KK\to \R\cup\{\infty\}$,
defined on nonzero elements by
$\ValK\left(c(t)\right)=\mu_0$
if $c(t)=\sum  
c_{\mu_k} t^{\mu_k}$, where the lowest order term is assumed to have non-zero coefficient, $c_{\mu_0}\ne 0$, and we set $\ValK(0)=\infty$. We also consider the associated local ring
\[
\mathcal O_{\KK}:=\{c\in\KK\mid\ValK(c)\ge 0\},
\]
with its maximal ideal $\mathfrak m=\{c\in\KK\mid\ValK(c)> 0\}$.
\end{defn}

The field $\KK$ is algebraically closed, and it is complete for the `$t$-adic topology' induced from the norm associated to this valuation, see \cite{Markwig:genPuiseux}.

\begin{defn}\label{d:Coeff}We define a group homomorphism $\Coeff:\KK\setminus\{0\}\to \C^*$ which takes $c(t)$ to the coefficient of its lowest order non-vanishing term
\begin{equation}\label{e:Coeff}
\Coeff(c(t)):=c_{\mu_0}.
\end{equation}
\end{defn}

\begin{remark}
The field $\KK$ is a completion of the usual field of Puiseux series $\mathcal K=\bigcup_{\ell\in \Z_{>0}}\C\left(\left(t^{\frac 1 {\ell}}\right)\right)$. Furthermore the field of Puiseux series $\mathcal K$ is the algebraic closure of the field of Laurent series $\mathbf L=\C((t))$. Both $\mathcal K$ and $\mathbf L$ have positive parts which are described by $\mathcal K_{>0}=\mathcal K\cap \KK_{>0}$ and $\mathbf L_{>0}=\mathbf L\cap \KK_{>0}$. And $\ValK$ restricts to the usual valuations $\operatorname{Val}_{\mathcal K}:\mathcal K\setminus\{0\}\to\mathbb Q$ and $\operatorname{Val}_{\mathbf L}:\mathbf L\setminus\{0\}\to\Z$.
\end{remark}

\subsection{Algebraic tori over $\KK$ and tropicalisation}\label{s:tptropicalisation}
Suppose $K$ is an infinite field with a positive part $K_{>0}$, as in Section~\ref{s:genPuiseux}. Recall that $T$ is an algebraic torus of dimension $r$ over $K$.
We can describe the $K$-valued points of $T$ as group homomorphisms from the character group to the multiplicative group of $K$, namely we identify $T(K)=\Hom(M,K^*)$. The group homomorphisms which take values in $K_{>0}$ define a positive part $T(K_{>0})=\Hom(M,K_{>0})$ of $T(K)$. If $v\in M$ and $p\in T(K)$ we will write either $p^v$ or $\chi^v(p)$ for the associated evaluation ``$p(v)$'' in $K^*$. We call $\chi^v$ the (multiplicative) character associated to $v$. 

We will be primarily interested in the case where $K=\KK$, where we define the tropicalization functor using an identical approach to the one introduced Lusztig (albeit in his case in the setting of the subfield $\mathbf L$ of Laurent series in $t$).   

By a {\it Laurent polynomial} on $T$ we will mean a $\KK$-linear combination of characters $\chi^v$. We may choose a basis of characters so that the coordinate ring of $T$ is described as $\KK[x_1^{\pm 1}, \dotsc, x_r^{\pm 1}]$ in which case this definition recovers Laurent polynomials in the variables $x_i$. 
A {\it positive Laurent polynomial}  is a linear combination of characters  with coefficients in $\KK_{>0}$.  Suppose $T^{(1)}$ and $T^{(2)}$ are tori over $\KK$. By a {\it positive rational map} $
\phi:T^{(1)}\to T^{(2)}$ we mean a rational map such that for any character $\chi$ of $T^{(2)}$ the composition $\chi\circ \phi:T^{(1)}\to \KK$ is given by a quotient of positive Laurent polynomials on $T^{(1)}$.

\begin{defn}[The tropicalisation of the torus $T$]\label{d:TropTorus} Define an equivalence relation $\sim$ on $T(\KK_{>0})$ by $x\sim x'$ if  and only if $\ValK(\chi(x))=\ValK(\chi(x'))$ for all characters $\chi$ of $T$. 
Then
\[
\Trop(T):=T(\KK_{>0})/\sim.
\]
The set $\Trop(T)$ inherits from the group structure of $T(\KK_{>0})$ a structure of abelian group (which we denote as addition).
\end{defn}

\begin{defn}[The map $\Val$ and identifying $\Trop(T)$ with $N_\R$]\label{d:TropN} 
Let $\Val$ denote the group homomorphism from the multiplicative group $T(\KK_{>0})$ to the additive group $N_{\R}$, 
\[
\Val:T(\KK_{>0})\to N_\R,
\]
which is characterised by the property that for any character $\chi$ of $T$ and $x\in T(\KK_{>0})$ the $\KK$-valuation of $\chi(x)$ satisfies
\[
\ValK(\chi(x))= \langle\chi,\Val(x)\rangle,
\]
where $\langle\ ,\ \rangle$ is the pairing between $M_\R$ and $N_\R$. Note that $\Val(x)$ is well-defined, as follows for example by choosing a basis for $M$ and using the additivity property $\ValK(\chi^{m_1+m_2}(x))=\ValK(\chi^{m_1}(x))+\ValK(\chi^{m_2}(x))$.

Since, by definition, $\Val(x)$ depends only on the $\KK$-valuations $\ValK(\chi(x))$, we observe that the map $\Val$ descends to a homomorphism,
$\Trop(T)\to N_\R$, which we may also call $\Val$, by abuse of notation.

The map $\Trop(T)\to N_\R$ defined by $\Val$ is an isomorphism. Its inverse, 
\[
\iota:N_\R \To \Trop(T) : d\mapsto [x_{\mon}(d)],
\]
is defined by sending $d\in N_\R$ to the equivalence class of the `monomial' element $x_{\mon}(d)$ in $T(\KK_{>0})$, which is characterised by 
\[
\chi(x_{\mon}(d))=t^{\langle\chi,d\rangle}
\quad \text {for all}\quad \chi\in X^*(T).
\] 
This bijection endows $\Trop(T)$ with the structure of an $\R$-vector space. 
\end{defn}

\begin{defn}[The tropicalisation of a positive rational map]\label{d:tropmap} Suppose $\phi:T^{(1)}\to T^{(2)}$ is a positive rational map between two tori. Then $\phi(\KK_{>0}):T^{(1)}(\KK_{>0})\to T^{(2)}(\KK_{>0})$ is everywhere well-defined and is compatible with the equivalence relation $\sim$. Note that the compatibility with $\sim$ uses the positivity of the leading terms. The tropicalisation $\Trop(\phi)$ is defined to be  the resulting map 
 \[\Trop(\phi):\Trop(T^{(1)})\to\Trop( T^{(2)})\]
between equivalence classes. The map $\Trop(\phi)$ is piecewise-linear with respect to the linear structures on the $\Trop(T^{(i)})$ from Definition~\ref{d:TropN}.
\end{defn}

\begin{example}\label{ex:W} A positive Laurent polynomial $W=\sum_{i=1}^n\gamma_i x^{v_i}$, where $x^{v_i}=\prod_{j=1}^r x_j^{v_{i,j}}$ and $\gamma_i\in\KK_{>0}$, can be considered as a positive rational map from the torus $(\KK\setminus \{0\})^r$ to $\KK\setminus \{0\}$, and its tropicalisation can be identified with the piecewise linear map $\R^r\to \R$ given by
\[
\Trop(W)(d_1,\dotsc, d_r)=\min (\{c_i + \sum_{j=1}^{r} v_{i,j}d_j\mid i=1,\dotsc n \}),
\]
where $c_i:=\ValK(\gamma_i)$.

\end{example}

\subsection{Leading terms and exponential map for $T(\KK_{>0})$}\label{s:exp}
For any point $p\in T(\KK_{>0})$ the map which associates to $v\in M$ the leading term of $\chi^v(p)$ defines a group homomorphism,  
\[
p_0 \in\Hom(M,\{c t^\mu\mid c\in\R_{>0},\mu\in\R\}).
\]
We call $p_0$ the {\it leading term} of $p$. 
Observe that $p_0$ lies in the group  
\[
\Hom(M,\{c t^\mu\mid c\in\R_{>0},\mu\in\R\})\subset \Hom(M,\KK_{>0})=T(\KK_{>0}),
\] 
which we call the `leading term subgroup' of $T(\KK_{>0})$. This subgroup is a product of the groups $T(\R_{>0})$ and $\Hom(M,\{ t^\mu\mid \mu\in\R\})$, and the map $p\mapsto p_0$ is a surjective group homomorphism, which is in a sense a projection from $T(\KK_{>0})$ to this subgroup.
 
Recall that $\m=\{\gamma\in \KK\mid  \ValK(\gamma)>0\}$. A general element $p$ of $T(\KK_{>0})$ is the product of its leading term $p_0$ and a factor from 
\begin{equation*}
T_e(\KK_{>0}):=\{p\in T(\KK_{>0})\mid \chi^v(p)\in 1 +\mathfrak m
\text{ for all $v\in M$}\}.
\end{equation*}

  We may describe an element of $T(\KK_{>0})$ entirely in `logarithmic terms' using the factorisation above and the following exponential maps.

\begin{defn}[$u\mapsto e^u$]
We have the  exponential map
$N_\R\to T(\R_{>0})
$
which sends $u\in N_\R$ to the element $e^u$ defined by the property that $\chi^v(e^u)=e^{\langle v,u\rangle}
$ for all $v\in M$. This is just the usual exponential map of the real Lie group $T(\R_{>0})$, and it is
an isomorphism.
\end{defn}

\begin{defn}[$d\mapsto t^d$]\label{d:t^d} We also have an analogous isomorphism
\begin{eqnarray*}
N_\R&\to &\Hom(M,\{ t^\mu\mid \mu\in\R\}),
\end{eqnarray*}
where the image $t^d$ of $d\in N_\R$ is defined by $\chi^v(t^d)=t^{\langle v,d\rangle}$, for all $v\in M$.
\end{defn}

Observe that the leading term group is isomorphic via the above two exponential maps (or rather their inverse maps) to $N_\R\oplus N_\R$, and any leading term $p_0$ of a $p\in T(\KK_{>0})$ is of the form $e^u t^d$ for unique $(u,d)\in N_\R\oplus N_\R$. 

\begin{defn}[$\exp_T$ and $\log_T$]\label{d:expm}
Let $\exp:\m\to \KK_{>0}$ be
 the exponential map of $\KK$, which is defined in terms of its power series. Its image is $\KK_{1}:=\{k\in\KK_{>0}\mid k\in 1+\mathfrak m
  \}$, and we have an inverse map $\log: \KK_1\to \m$ defined in terms of the power series for the logarithm.
 
Suppose  $N_\m=N\otimes_\Z \m$. We let $\exp_T:N_{\m}\to T_e(\KK_{>0})$ be the map $w\mapsto\exp_T(w)$ defined by the property that for any $v\in M$ with associated character $\chi^v$,  
\[
\chi^v(\exp_T(w))=\exp(\langle v, w\rangle).
\]
Here, by abuse of notation, $\langle\ ,\ \rangle:M\x N_{\m}\to \m$ is the $\m$-linear extension of the pairing between $M$ and $N$. 

The character $\chi^v(\exp_T(w))$ always has valuation $0$ and leading coefficient $1$ for any $w\in N_\m$. 
Indeed, the map $\exp_T$ is an isomomorphism from the additive group $N_{\m}$ to the multiplicative group $T_e(\KK_{>0})$,
and has an inverse $\log_T:T_e(\KK_{>0})\to N_\m$.
\end{defn}

Combining the three types of `exponential map' above we obtain a group isomorphism
\begin{eqnarray}\label{e:totalexp}
N_\R\oplus N_\R \oplus N_\m &\overset\sim\longrightarrow &T(\KK_{>0})\\ \nonumber
(u,d,w) &\mapsto  &e^u t^d \exp_T(w).
\end{eqnarray}
The inverse of this map is defined by $p\mapsto (u,d,\log_T(e^{-u}t^{-d}p))$,  where the leading term $p_0$ of $p$ is $e^u t^d$.

\begin{defn} 
Suppose $p\in T(\KK_{>0})$ and $p=e^u t^d\exp_T(w)$, as in \eqref{e:totalexp} above. Then we note that $d=\Val(p)$. We also introduce a notation for  $u$, namely we call $u$ the `logarithmic leading coefficient' of $p$ and write $u=\Log(p)$.

In terms of coordinates, the map $\Log:T(\KK_{>0})\to N_\R$ is simply given by 
\[(c^{(1)}(t),\dotsc, c^{(d)}(t))\mapsto (\log(c^{(1)}_{m^{(1)}_0}),\log(c^{(2)}_{m^{(2)}_0}),\dotsc, \log(c^{(d)}_{m^{(d)}_0})),
\]
where $c^{(j)}_{m^{(j)}_0}=\Coeff(c^{(j)}(t))\in \R_{>0}$, the coefficient of the leading term of  $c^{(j)}(t)$.

\end{defn}

Thus for $p\in T(\KK_{>0})$, the leading term $p_0$ of $p$ is given by
\[
p_0=e^{\Log(p)}t^{\Val(p)}, 
\]
and any $p\in T(\KK_{>0})$  is of the form $e^{\Log(p)}t^{\Val(p)}\exp(w)$ for some (unique) $w\in \mathfrak m$.

\begin{defn}\label{d:rationalT} For the purpose of rationality considerations (Section~\ref{s:rationality}),
 we also make the analogous definitions over the field  $\mathcal K$ of Puiseaux series,
\[
\mathcal K:=\bigcup_{n\in\Z_{>0}}\C[[t^{\frac 1n}]]\subset \KK.
\]
Namely, this field comes with positive part $\mathcal K_{>0}=\mathcal K\cap \KK_{>0}$, and valuation $\VAL_{\mathcal K}\to \Q\cup\{\infty\}$, and associated ring of integers $\mathcal O_{\mathcal K}$ and maximal ideal $\mathfrak m_{\mathcal K}$. Moreover we note that we have an analogue   $T(\mathcal K_{>0})$ of $ T(\KK_{>0})$ which can be described by
\begin{eqnarray}\label{e:totalexpPuiseaux}
N_\R\oplus N_{\mathbb Q} \oplus N_{\mathfrak m_{\mathcal K}} &\overset\sim\longrightarrow &T(\mathcal K_{>0})\\ \nonumber
(u,d,w) &\mapsto  &e^u t^d \exp_T(w).
\end{eqnarray}
\end{defn}



\section{The positive critical point theorem}\label{s:theorems}

In this section we state our main result.

 \begin{defn}[Positive and tropical critical points]\label{def:TropCritPt}
Suppose $W$ is a Laurent polynomial on the torus $T$ over $\KK$. We say a point $p\in T(\KK)$ is a critical point for $W$ if the gradient of $W$ (in terms of some/any coordinates $x_i$) vanishes at $p$. This property of $p$ is independent of the choice of the coordinates. If $p$ lies in $T(\KK_{>0})$ we say that $p$ is a {\it positive critical point} of $W$.
For a positive critical point $p$, we call the equivalence class $[p]\in\Trop(T)$ a {\it tropical critical point} of $W$.
\end{defn}

\begin{defn}[Complete Laurent polynomials]\label{d:good} Suppose we have identified regular functions on $T$ as Laurent polynomials in variables $x_1,\dotsc, x_r$. Then associated to a  positive Laurent polynomial $W=\sum_{i=1}^n \gamma_i x^{v_i}$ with $\gamma_i\in\KK_{>0}$ and $v_i\in \Z^{r}$, we consider its Newton polytope
\[
\operatorname{Newton}(W)=\operatorname{ConvexHull}\left(\{v_i\mid i=1\,\dotsc, n\}\right)\subset \R^r.
\]
Let us call $W$ {\it complete} if its Newton polytope is $r$-dimensional with zero in its interior. This polytope can be considered to be in $M_\R$, without choice of coordinates. The property of a Laurent polynomial being {\it complete} is independent of the choice of basis of characters of $T$. By abuse of notation we may, also in the absence of chosen coordinates $x_i$, write $x^v$ for the function $T(\KK)\to\KK$ associated to $v\in M$, and $p^v$ for the value of $x^v$ on $p\in T(\KK)$. 
\end{defn}

For Laurent polynomials with coefficients in $\R_{>0}$ whose Newton polytopes are complete  in the above sense, it was shown by Galkin that there exists a unique positive critical point, referred to by him as the `conifold point'.

\begin{prop}\label{p:Galkin}\cite{Galkin}
Suppose $L\in \R[x_1^{\pm 1},\dotsc, x_r^{\pm 1}]$ is a Laurent polynomial with positive coefficients, and that  the Newton polytope of $L$ is full-dimensional and contains $0$ in its interior. Then $L$ has a unique critical point in $\R_{>0}^r$. This critical point is non-degenerate and a global minimum for $L|_{\R_{>0}^r}$.
\end{prop}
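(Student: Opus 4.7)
The natural approach is to change coordinates by setting $x_i = e^{u_i}$, converting $L$ into a function
\[
f(u) = \sum_{i=1}^n \gamma_i \, e^{\langle v_i,\, u\rangle}
\]
on $\R^r$, where $\gamma_i > 0$. Since the substitution $(u_1,\dotsc,u_r) \mapsto (e^{u_1},\dotsc,e^{u_r})$ is a diffeomorphism between $\R^r$ and $\R_{>0}^r$, critical points of $L|_{\R_{>0}^r}$ correspond bijectively to critical points of $f$, and non-degeneracy plus the global minimum property transfer as well. So the goal reduces to showing that $f$ has a unique, non-degenerate, global minimum on $\R^r$.

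The first key step is \emph{strict convexity of $f$}. Each summand $e^{\langle v_i, u\rangle}$ is convex with Hessian $e^{\langle v_i, u\rangle}\, v_i v_i^{T}$, so
\[
\operatorname{Hess} f(u) \;=\; \sum_{i=1}^n \gamma_i\, e^{\langle v_i, u\rangle}\, v_i v_i^{T}.
\]
This is a positive combination of rank-one positive semidefinite matrices; the sum is positive definite provided the $v_i$ span $\R^r$. Full-dimensionality of the Newton polytope guarantees this. In particular the Hessian is positive definite everywhere, giving both strict convexity of $f$ and non-degeneracy at any critical point.

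The second key step, and the one where the interior hypothesis is used, is \emph{coercivity}: $f(u)\to\infty$ as $\|u\|\to\infty$. Since $0$ lies in the interior of $\operatorname{ConvexHull}(\{v_i\})$, for every unit vector $\hat u\in S^{r-1}$ there is some $v_i$ with $\langle v_i,\hat u\rangle>0$; by compactness of the sphere, we can choose $\varepsilon>0$ so that for every $\hat u\in S^{r-1}$ there exists an index $i$ (depending on $\hat u$) with $\langle v_i,\hat u\rangle\geq \varepsilon$. Then for any $u$ with $\|u\|=R$, picking the corresponding $i$ and using positivity of all terms, one obtains
\[
f(u) \;\geq\; \gamma_i\, e^{\langle v_i, u\rangle} \;\geq\; \gamma_{\min}\, e^{\varepsilon R},
\]
where $\gamma_{\min} = \min_i \gamma_i > 0$. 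So $f$ is coercive, and this is the step I would expect to be the main obstacle — one has to translate the geometric interior-point hypothesis into a uniform exponential lower bound.

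Combining the three ingredients: $f$ is continuous, strictly convex, and coercive, hence attains its infimum on $\R^r$ at a unique point $u^\ast$, which is the unique critical point of $f$ and a global minimum. Strict convexity and the Hessian computation above make $u^\ast$ non-degenerate. Translating back via $x_i = e^{u_i^\ast}$ gives the unique positive critical point of $L$, which is non-degenerate and a global minimum of $L|_{\R_{>0}^r}$.
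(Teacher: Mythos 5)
Your proof is correct and follows essentially the same route the paper attributes to Galkin: pass to logarithmic coordinates, use the interior-point hypothesis to get coercivity (values tend to infinity in every unbounded direction), and use the everywhere positive-definite Hessian (full-dimensionality gives that the $v_i$ span) to get uniqueness, non-degeneracy and the global-minimum property. The paper itself only cites \cite{Galkin} and sketches exactly this argument after Lemma~\ref{l:GalkinGen}, so there is nothing to add.
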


Our main theorem is an analogue of Galkin's result over the field $\KK$ of generalised Puiseux series. 

\begin{theorem}\label{t:main} Suppose $T$ is a  torus over $\KK$ and $W$ is positive Laurent polynomial on $T$ which is complete  in the sense of Definition~\ref{d:good}. Then $W$ has a unique positive critical point $\pcrit\in T(\KK_{>0})$.
\end{theorem}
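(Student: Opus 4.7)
The plan is to parametrise any candidate positive critical point via the exponential decomposition \eqref{e:totalexp} as $p=e^u t^d\exp_T(w)$ and to determine the three pieces of data $d\in N_\R$, $u\in N_\R$ and $w\in N_{\mathfrak m}$ in that order. Throughout, the critical point equations of $W$ are viewed intrinsically as the vanishing of $\sum_i \gamma_i v_i\, p^{v_i}\in M_\R\otimes\KK$ and examined one valuation level at a time.

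First I would determine the valuation $d=\Val(p)$ of a hypothetical positive critical point. Each summand $\gamma_i v_i\, p^{v_i}$ has $\KK$-valuation $c_i+\langle v_i,d\rangle$, where $c_i=\ValK(\gamma_i)$, and its leading coefficient is a positive real multiple of the vector $v_i\in M_\R$. Because a finite linear combination of vectors with strictly positive real coefficients can vanish only when the vectors fail to lie in any open half-space through the origin, cancellation at each valuation level forces the active index set to contain $0$ in the relative interior of the convex hull of the corresponding $v_i$. Applied to the minimal level, which equals $\Trop(W)(d)$ up to an additive constant, this already shows that $d$ must be a maximum of $\Trop(W)$, proving Theorem~\ref{t:main2intro}. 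An inductive analysis over faces of $\Newton(W)$, with the completeness hypothesis providing the base case, then pins down $d$ uniquely as the tropical critical point produced by the recursive construction of Section~\ref{s:critconstr}.

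With $d$ determined, the leading-order piece of the critical point system becomes
\[
\sum_{c_i+\langle v_i,d\rangle=\Trop(W)(d)} \Coeff(\gamma_i)\, v_i\, e^{\langle v_i,u\rangle}=0,
\]
which is precisely the critical point system of the real positive Laurent polynomial $W_{\mathrm{lead}}(y)=\sum \Coeff(\gamma_i)\, y^{v_i}$ at $y=e^u$, the sum being taken over the ``active face''. The previous step ensures that this face is a translate of a full-dimensional polytope containing $0$ in its relative interior, so that Galkin's Proposition~\ref{p:Galkin} applies and produces a unique positive real critical point $e^u$, together with its non-degeneracy. Any ambiguity in $u$ in directions orthogonal to the affine span of the active face is resolved by the lower-dimensional complete Newton data inherited from the recursion in the first step.

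Finally, I would extend the leading term $e^u t^d$ to a full positive critical point $\pcrit=e^u t^d\exp_T(w)$ by solving for $w\in N_{\mathfrak m}$ order by order in the valuation. The non-degeneracy given by Galkin's proposition implies that the Jacobian of the critical point equations at the leading term is invertible over $\R$, so that each higher order coefficient of $w$ is uniquely determined by a linear equation; convergence in the $t$-adic topology then follows from completeness of $\KK$, and uniqueness of $\pcrit$ is built into the construction at every stage. I expect the main obstacle to be the first step: executing the recursion that determines $d$ when the maximum set of $\Trop(W)$ is positive-dimensional, since then the leading-order system only constrains $u$ on the affine span of the active face, and one must simultaneously arrange matching complete Newton subdata on lower-dimensional faces so that $d$ and the remaining coordinates of $u$ fit together consistently.
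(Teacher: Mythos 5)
Your outline (valuation first, then leading coefficient, then higher-order terms) matches the paper's strategy, but two of your three steps contain genuine gaps. In your second step you apply Galkin's Proposition~\ref{p:Galkin} only to the leading-order system attached to the $\delta_i=0$ terms and assert that this face is ``a translate of a full-dimensional polytope containing $0$ in its relative interior''. That is false in general: the minimal face $F$ of $\AugNewton(W)$ containing $(\tau,0)$ can have any positive codimension, so the $\delta_i(\crit)=0$ exponents typically span only a proper subspace $B_{\le 0}\subset M_\R$, and the leading-order equation determines $u$ only modulo $B_{\le 0}^{\perp}$. The residual ambiguity is \emph{not} resolved by ``the lower-dimensional complete Newton data'' inherited from the recursion for $d$: a Newton datum records only the valuations $c_i$ and exponents $v_i$, whereas the missing components of $u$ are pinned down by the critical coefficient conditions at \emph{every} level $\varepsilon\in\mathcal E$ (Definition~\ref{d:coeffcritcond}), which involve the numerical leading coefficients $\Coeff(\gamma_i)$ of the higher-$\delta_i$ terms and are solved in the paper by a nested family of Galkin problems on the quotients $B_{<\varepsilon}^{\perp}/B_{\le\varepsilon}^{\perp}$ (Proposition~\ref{p:coefficient}). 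Without this filtered argument you have neither existence nor uniqueness of $\Log(\pcrit)$.

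The same degeneracy breaks your third step. The ``Jacobian of the critical point equations at the leading term'' is not invertible over $\R$: its residue (lowest-valuation) block is the Hessian of the $\delta_i=0$ part only, namely $\sum_{\delta_i=0}\Coeff(\gamma_i)e^{\langle v_i,u\rangle}\langle v_i,\cdot\rangle\langle v_i,\cdot\rangle$, which is degenerate on $B_{\le 0}^{\perp}$ whenever $F$ is not of codimension one; the full Hessian over $\KK$ is nondegenerate (Lemma~\ref{l:nondeg}) but its inverse loses valuation of order $\varepsilon_m$, so no straightforward order-by-order or Hensel/Newton iteration applies. This is precisely why the paper replaces a single linearised solve by the Recursive Construction following Proposition~\ref{p:recursion}, correcting at each stage only in a subquotient $B_{<\varepsilon_h}^{\perp}/B_{\le\varepsilon_h}^{\perp}$ where the form $\mathbb B_h$ of \eqref{e:Bh} is positive definite, with increments of exponent $\nu_k-\varepsilon_{h(k)}$. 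Finally, ``convergence follows from completeness of $\KK$'' is not enough: one must show that these exponents form a set in $\MonSeq$ and tend to infinity, which requires the semigroup argument of Lemma~\ref{l:convergence}; a priori an order-by-order scheme could generate exponents accumulating below a finite bound, in which case the candidate series would not even define an element of $\KK$. Uniqueness likewise does not come for free from the construction; it needs the separate inductive argument of Proposition~\ref{p:uniqueness} built on Lemma~\ref{l:wvanishing}.
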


As an application of this theorem we obtain from any positive, complete  Laurent polynomial $W$ a distinguished point in $\Trop(T)$ given by the equivalence class $[\pcrit]$ of the positive critical point.  Equivalently, we have that $\Val(\pcrit)$ is a point in $N_\R$ canonically associated to $W$. We call this point {\it the tropical critical point} of $W$.  

Let us consider associated to $W$ the  following subset of $N_\R$, 
\begin{equation}\label{e:PW}
\mathcal P_{W}=\{d\in N_\R\mid \Trop(W)(d)\ge 0\}.
\end{equation}
This set is either empty or it is a convex polytope. We also show the following. 

\begin{theorem}\label{t:critmax}
Suppose $W$ is a complete, positive Laurent polynomial over $\KK$ with tropical critical point $\crit\in N_\R$. Then we have 
\[
\Trop(W)(\crit)=\max_{d\in N_\R}(\Trop(W)(d)).
\] 
In particular whenever $\mathcal P_{W}$ is nonempty then $\crit$ lies in its relative interior.  
\end{theorem}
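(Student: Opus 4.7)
The plan is to exploit the critical point equation $\sum_i \gamma_i v_i \pcrit^{v_i}=0$ (viewed as an equation in $M_{\KK}=M\otimes_{\Z}\KK$) and extract information from its leading term in $t$. Write $\gamma_i \pcrit^{v_i}=t^{c_i+\langle v_i,\crit\rangle}(\lambda_i+\text{higher order})$ with $\lambda_i=\Coeff(\gamma_i\pcrit^{v_i})\in\R_{>0}$. Let $m=\Trop(W)(\crit)=\min_i(c_i+\langle v_i,\crit\rangle)$ and $S=\{i:c_i+\langle v_i,\crit\rangle=m\}$. Collecting coefficients of $t^m$ in the vector equation yields the identity
\[
\sum_{i\in S}\lambda_i v_i = 0 \quad \text{in } M_{\R},
\]
with each $\lambda_i>0$. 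Normalising, set $\mu_i=\lambda_i/\sum_{j\in S}\lambda_j$, so that $\sum_{i\in S}\mu_i v_i=0$ is a strict convex combination exhibiting $0$ in the relative interior of $\operatorname{ConvexHull}(\{v_i:i\in S\})$.

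Next I would deduce the global maximality statement by a direct averaging argument. For any $d\in N_\R$, since $\Trop(W)$ is a minimum over affine functionals,
\[
\Trop(W)(d)\le \sum_{i\in S}\mu_i\bigl(c_i+\langle v_i,d\rangle\bigr)=\sum_{i\in S}\mu_i c_i+\Bigl\langle \sum_{i\in S}\mu_i v_i,d\Bigr\rangle=\sum_{i\in S}\mu_i c_i.
\]
Evaluating the same weighted sum at $d=\crit$ gives $\sum_{i\in S}\mu_i(c_i+\langle v_i,\crit\rangle)=m$ because every term in the sum equals $m$ by definition of $S$. Hence $\sum_{i\in S}\mu_i c_i=m$, and therefore $\Trop(W)(d)\le m=\Trop(W)(\crit)$ for all $d\in N_\R$, proving the first assertion.

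For the second assertion, if $\mathcal P_W$ is nonempty then $m=\max \Trop(W)\ge 0$, so $\crit\in\mathcal P_W$. If $m>0$ then every defining inequality $c_i+\langle v_i,d\rangle\ge 0$ is strict at $\crit$, placing $\crit$ in the topological interior of $\mathcal P_W$. If $m=0$, note that $\sum_{i\in S}\mu_i c_i=m=0$, so for any $d\in\mathcal P_W$ the nonnegative terms in
\[
\sum_{i\in S}\mu_i\bigl(c_i+\langle v_i,d\rangle\bigr)=\sum_{i\in S}\mu_i c_i=0
\]
must individually vanish. Thus every inequality indexed by $S$ is tight on all of $\mathcal P_W$, meaning the face of $\mathcal P_W$ cut out by the inequalities tight at $\crit$ is all of $\mathcal P_W$; this is exactly the condition that $\crit$ lies in the relative interior.

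The only subtle step is the passage from $\sum_i\gamma_i v_i\pcrit^{v_i}=0$ in $M_{\KK}$ to the leading-order identity $\sum_{i\in S}\lambda_i v_i=0$: one must argue that the coefficient of $t^m$ in the vector-valued sum genuinely vanishes, using that the valuation on $\KK$ is additive and that each term contributes its leading coefficient $\lambda_i v_i$ without sign ambiguity (the $\lambda_i$ are manifestly positive because both $\gamma_i$ and $\pcrit^{v_i}$ lie in $\KK_{>0}$). Once this leading-term identity is in hand, the rest of the argument is a clean averaging/convex-geometry computation, and completeness of $W$ enters only implicitly through the existence of $\pcrit$ guaranteed by Theorem~\ref{t:main}.
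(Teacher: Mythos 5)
Your proof is correct, but it takes a more direct route than the paper's. The paper deduces Theorem~\ref{t:critmax} from Lemma~\ref{l:critmax}: it works with the recursively constructed canonical point $\crit(\Xi_W)$ and the hyperplane $\widetilde E$ of Lemma~\ref{l:1a}, together with the augmented Newton polytope $\AugNewton(W)$ — the maximal value of $\Trop(W)$ is identified with the minimal height $\tau$ of $\AugNewton(W)$ above $0$ (Lemma~\ref{l:max}), and $\Trop(W)(\crit)=\tau$ is then checked via Lemma~\ref{l:tauattainment}. You instead start from the existence of the positive critical point $\pcrit$ (Theorem~\ref{t:main}, which the statement presupposes via the tropical critical point), extract the coefficient of $t^{m}$ from the gradient equation $G(\pcrit)=0$ — this is precisely the $\varepsilon=0$ case of the paper's Lemma~\ref{l:btropcrit}, yielding $0\in\ConvexHull^{\circ}(\{v_i\mid i\in S\})$ with positive weights $\mu_i$ — and then replace the polytope machinery by the averaging estimate $\Trop(W)(d)\le\sum_{i\in S}\mu_i\bigl(c_i+\langle v_i,d\rangle\bigr)=\sum_{i\in S}\mu_i c_i=m$. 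In effect your convex combination $\sum_{i\in S}\mu_i(c_i,v_i)=\bigl(\sum_{i\in S}\mu_i c_i,0\bigr)$ plays the role of the paper's lowest point $(\tau,0)$ of $\AugNewton(W)$, so the two arguments are mathematically parallel; what yours buys is brevity and independence from the canonical-point construction of Section~\ref{s:critconstr}, while the paper's route has the advantage of establishing the maximality statement for the combinatorially constructed point $\crit(\Xi_W)$ itself, which is needed earlier in its logical order (before one knows $\Val(\pcrit)=\crit(\Xi_W)$). Your handling of the relative-interior claim — splitting into $m>0$ and $m=0$, showing in the latter case that the inequalities indexed by $S$ are implicit equalities on all of $\mathcal P_W$, and invoking the standard characterisation of the relative interior of a polyhedron by tight constraints — is also correct, and more explicit than the paper's ``in particular''.
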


\section{Proof of Theorem~\ref{t:main} and  Theorem~\ref{t:critmax}}\label{s:mainproof}

We begin the proof of Theorems~\ref{t:main} and \ref{t:critmax} by choosing notation. Let $T$ be an $r$-dimensional algebraic torus over $\KK$ with character lattice $M$ and cocharacter lattice $N$. Let $x^{v}$ denote the function on the torus $T$ associated to $v\in M$. We consider a  positive Laurent polynomial, 
\begin{equation}\label{e:genW}
W=\sum_{i=1}^n \gamma_i x^{v_i},
\end{equation}
with $v_i\in M$ assumed to be distinct, and coefficients $\gamma_i\in \KK_{>0}$. Recall that 
\[
\Newton(W)=\ConvexHull(\{v_i\mid i=1,\dotsc, n\})\subset M_\R.
\]
Since $W$ is positive we consider its tropicalisation in the sense of Section~\ref{s:tropicalisation}.  This tropicalisation   is the piecewise linear map $\Trop(W):N_\R\to \R$ explicitly given by 
\begin{equation}\label{e:WtropProofSection}
\Trop(W)(d)=\min (\{c_i + \langle v_i,d\rangle\mid i=1,\dotsc, n \}),
\end{equation}
where $c_i:=\ValK(\gamma_i)$ and $\langle\ ,\ \rangle$ is the dual pairing between $M$ and $N$.

Our main goal is to construct a critical point $\pcrit\in T(\KK_{>0})$ for $W$ and show that  it is unique. The first three subsections will be concerned with determining the valuation $\Val(\pcrit)\in N_\R$, or equivalently $\ValK(\pcrit^v)$ for all  $v\in M$. 
\subsection{The augmented Newton polytope and the maximum value of $\Trop(W)$.}

We note that the Newton polytope of $W$ depends only on the exponent vectors $v_i$ and takes no account of the coefficients  $\gamma_i$ or their valuations. It is useful to think of $\Newton(W)$ as the projection of a more general polytope. The following polytope associated to $W$ generalises the `t-Newton polygon' from \cite{Markwig:genPuiseux} and comes up for a different purpose in \cite{Maclagan:intropaper, Sturmfels:notes2}.

\begin{defn}[Augmented Newton Polytope] \label{d:aug} For $W$ given by \eqref{e:genW}, let 
\[
\AugNewton(W):=\ConvexHull(\{(c_i,v_i)\mid i=1,\dotsc, n\})\subset \R\oplus M_\R.
\]
We refer to $\AugNewton(W)$ as the {\it augmented Newton polytope} associated to $W$. It comes with a projection 
\begin{align*}
\pr:\AugNewton(W)&\quad\to\quad \Newton(W)\\
(c,v)&\quad\mapsto\quad  v.
\end{align*}
\end{defn} 

We now show for later use how the augmented Newton polytope encodes the maximal value of $\Trop(W)$.

\begin{remark}\label{r:AugNewton} To aid with intuition, let us interpret the points of $\AugNewton(W)$ as linear functions on $\R\oplus N_\R$, or, by restriction, as functions on $\{1\}\x N_\R$. In particular note that for a vertex $(c_i,v_i)$ this restriction gives $(1,d)\mapsto c_i+\langle v_i,d\rangle$.  The map $\Trop(W):N_\R\to \R$, see \eqref{e:WtropProofSection}, can now be interpreted as taking the (pointwise) min of {\it all} of these functions associated to points of $\AugNewton(W)$. Namely,
\[
\Trop(W): d\mapsto \min_{w\in\AugNewton(W)} \langle w,(1,d)\rangle.
\]            
This defines the same function as \eqref{e:WtropProofSection}, since the above $\min$ will always be attained on some extremal $w=(c_i,v_i)$. 

If $W$ is complete, then this implies that $\AugNewton(W)$ has a nonempty intersection with the line $\R\oplus\{0\}$. As a consequence the function $\Trop(W)$ is bounded from above by the (constant) function  
\[
d\mapsto \min_{(c,0)\in\AugNewton(W)} \langle (c,0) ,(1,d)\rangle=\min_{(c,0)\in\AugNewton(W)} c.
\] 
The main lemma of this section shows that this bound is best possible.
\end{remark}

\begin{defn}\label{d:minimalheightAugNewton}
Given a complete  Laurent polynomial $W$ with its associated polytope $\AugNewton(W)$ we refer to
\[\tau=\min(\{c\in\R\mid (c,0)\in\AugNewton(W)\})
\]
as the {\it minimal height} above $0$ of $\AugNewton(W)$. We refer to $(\tau,0)$ as the {\it lowest point} above $0$ in $\AugNewton(W)$. 
\end{defn}

\begin{lemma}\label{l:max}
Let $W$ be  any positive, complete  Laurent polynomial, and let $\tau$ be the minimal height above $0$ of $\AugNewton(W)$.
Then 
\[
\tau =\max_{d\in N_\R} \Trop(W)(d).
\]
\end{lemma}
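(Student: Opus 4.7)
The inequality $\Trop(W)(d) \leq \tau$ for all $d \in N_\R$ is already recorded in Remark~\ref{r:AugNewton}: the point $(\tau, 0) \in \AugNewton(W)$ contributes the constant function $d \mapsto \tau$ to the pointwise minimum defining $\Trop(W)$. The content of the lemma is the reverse inequality. My plan is to identify each side of the claimed equality as the optimum of one member of a pair of dual linear programs, and to invoke strong LP duality.

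The primal program I would set up is to minimize $\sum_i \lambda_i c_i$ subject to $\lambda_i \geq 0$, $\sum_i \lambda_i = 1$, and $\sum_i \lambda_i v_i = 0$. A feasible tuple $(\lambda_i)$ is exactly an expression of some point $(c,0) \in \AugNewton(W)$ as a convex combination of the vertices $(c_i, v_i)$, with $c = \sum_i \lambda_i c_i$, so the optimal value of this LP equals $\tau$ by Definition~\ref{d:minimalheightAugNewton}. Feasibility follows from completeness: $0$ lies in the interior of $\Newton(W)$, so some convex combination of the $v_i$ vanishes.

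The dual, with free variables $d \in N_\R$ attached to the equality constraints $\sum_i \lambda_i v_i = 0$ and $d_0 \in \R$ attached to the normalization $\sum_i \lambda_i = 1$, is to maximize $d_0$ subject to $d_0 \leq c_i + \langle v_i, d \rangle$ for all $i$. For each fixed $d$, the largest admissible $d_0$ is $\min_i(c_i + \langle v_i, d\rangle) = \Trop(W)(d)$, so the dual optimum coincides with $\sup_{d \in N_\R} \Trop(W)(d)$. Since the primal is feasible with finite optimum $\tau$, strong LP duality yields $\tau = \max_{d \in N_\R} \Trop(W)(d)$, and in particular the supremum is attained.

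I expect no real obstacle beyond mild care in handling the unrestricted-sign dual variables coming from the equality constraints, and in ensuring the optimum is attained rather than only approached. A purely convex-geometric alternative would be to apply the supporting hyperplane theorem to the polytope $\AugNewton(W)$ at its boundary point $(\tau, 0)$ and then show that the resulting supporting functional can be normalized to have first (i.e., $c$-axis) coordinate equal to $1$; here the completeness hypothesis is precisely what rules out the degenerate case where the supporting hyperplane is ``vertical'', as such a hyperplane would separate $0$ from $\Newton(W)$ and contradict the fact that $0$ lies in the interior of $\Newton(W)$.
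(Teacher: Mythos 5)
Your LP-duality argument is correct and is a genuinely different route from the paper's. The paper proves the bound $\Trop(W)(d)\le\tau$ exactly as you do (via the point $(\tau,0)\in\AugNewton(W)$), but for attainment it simply asserts that one can choose $d\in N_\R$ so that $(\tau,0)$ lies on $\lowestface_{(1,d)}(\AugNewton(W))$ — i.e.\ it takes for granted the existence of a supporting functional of the special form $(1,d)$ at that point — whereas you package both the value and the attainment into one application of strong duality for the pair of programs ``minimize $\sum_i\lambda_i c_i$ over convex combinations with $\sum_i\lambda_i v_i=0$'' and ``maximize $d_0$ subject to $d_0\le c_i+\langle v_i,d\rangle$''. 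Your identification of the primal optimum with $\tau$ and of the dual optimum with $\sup_d\Trop(W)(d)$ is right, feasibility of the primal is exactly completeness, and compactness of the primal feasible set gives the finite optimum needed to invoke strong duality, which then also yields attainment of the max. What your approach buys is that the existence of the optimal $d$ comes for free from the duality theorem rather than from an unproved geometric assertion; what the paper's approach buys is that the chosen $d$ and its lowest face are reused later (Lemma~\ref{l:tauattainment}, Remark~\ref{r:facetcase}, Lemma~\ref{l:tropcritmax}), so the geometric formulation is the one the rest of the argument is built on.

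One small caveat about your sketched convex-geometric alternative: applying the supporting hyperplane theorem at $(\tau,0)$ and ruling out only the ``vertical'' case is not quite enough, because when $\AugNewton(W)$ is not full-dimensional (or $(\tau,0)$ is its unique point above $0$) the theorem may hand you a supporting functional whose first coordinate is negative, which cannot be rescaled to $(1,d)$. The clean fix is to separate $\AugNewton(W)$ from the disjoint open ray $\{(c,0)\mid c<\tau\}$: boundedness of the separating functional on the ray forces its first coordinate to be $\ge 0$, completeness excludes $=0$ as you say, and one then normalizes to $(1,d)$. Your primary LP argument avoids this issue entirely.
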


To prove this lemma we require the notion of a lowest face of a polytope with respect to a vector in the dual space. 

\begin{defn}[Lowest face]\label{d:lowestface} Suppose $\Delta$ is a polytope in a real vector space $V$ (such as $\R\oplus M_\R$), and $\alpha$ is a nonzero vector  in the dual space $V^*$. Let $\langle\ ,\ \rangle: V\x V^*\to\R$ denote the dual pairing. We define 
\[
\lowestface_\alpha(\Delta) := \{w\in \Delta \mid \langle w,\alpha\rangle \le \langle v, \alpha\rangle\text{ for all } v\in\Delta \}.
\] 
We let
\begin{equation}\label{e:shortnotation}
\langle\Delta,\alpha\rangle:=\min\{\langle v,\alpha\rangle\mid v\in\Delta\}.
\end{equation}
Then we have, equivalently, $ \lowestface_\alpha(\Delta)=\{w\in \Delta\mid \langle w,\alpha\rangle=\langle\Delta, \alpha\rangle\}$.
\end{defn}

\begin{remark}\label{r:lowestface} If the polytope $\Delta$ in the above definition is not full-dimensional, then it can happen that $\lowestface_\alpha(\Delta)$ equals to all of $\Delta$. In this case we include $\Delta$ itself among the faces of $\Delta$. If $\Delta$ is full-dimensional however, then we use the word `face' to mean `proper face'. 
\end{remark}

\begin{remark}\label{r:transversalF}
Suppose $\Delta=\AugNewton(W)$ and $\pr:\AugNewton(W)\to \Newton(W)$, as in Definition~\ref{d:aug}. We note that if $0$ lies in the interior of $\Newton(W)$, i.e. if $W$ is complete, then every face of $\Delta$ intersects $\pr\inv(0)$ in at most one point. 
\end{remark}

\begin{proof}[Proof of Lemma~\ref{l:max}]
We use the notations as above. In particular note that as explained in Remark~\ref{r:AugNewton},
\begin{equation}\label{e:TropAug}
\Trop(W)(d)=\min_{w\in\AugNewton(W)} \langle w, (1,d)\rangle=\langle \AugNewton(W),(1,d)\rangle.
\end{equation}
For any $d\in N_\R$ we therefore have
\begin{equation}\label{e:troplesstau}
\Trop(W)(d)
\le \langle (\tau,0),(1,d)\rangle=\tau,
\end{equation}
simply because $(\tau,0)$ lies  in the polytope. 
Thus we know that $\Trop(W)(d)\le \tau$ for any $d$. 

On the other hand $d\in N_\R$ can be chosen in such a way that $(\tau,0)$ itself lies on the lowest face $\lowestface_{(1,d)}(\AugNewton(W))$
 of $(1,d)$. In that case by Definition~\ref{d:lowestface},
\[
\langle (\tau,0),(1,d)\rangle= \langle \AugNewton(W),(1,d)\rangle.
\]
The left hand side above equals to $\tau$, and  the right hand side equals to $\Trop(W)(d)$. Thus we see that $\tau=\Trop(W)(d)$ and the value $\tau$ is attained. Hence $\tau$ is the maximal value of $\Trop(W)$.
\end{proof}

Following on from the proof of Lemma~\ref{l:max}, we can characterise for which $d\in N_\R$ the function $\Trop(W)$ attains its maximal value $\tau$.

\begin{lemma}\label{l:tauattainment} Suppose $W$ is a positive, complete  Laurent polynomial and $(\tau,0)$ is the lowest point above $0$ in $\AugNewton(W)$. Then for $d\in N_\R$ we have
\begin{equation*}\label{e:tauattainment}
 \Trop(W)(d)=\tau \iff (\tau,0)\in\lowestface_{(1,d)}(\AugNewton(W)).
 \end{equation*}
\end{lemma}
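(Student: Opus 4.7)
The plan is to observe that this lemma follows essentially tautologically from the interpretation of $\Trop(W)$ via the augmented Newton polytope, once we notice the key numerical coincidence that $\langle (\tau,0),(1,d)\rangle = \tau$ holds independently of $d\in N_\R$. The strategy is to rewrite both sides of the biconditional as the same equality between two real numbers and read off the equivalence.

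First I would recall from equation \eqref{e:TropAug} (established in the proof of Lemma~\ref{l:max}) that
\[
\Trop(W)(d) \;=\; \langle \AugNewton(W),(1,d)\rangle,
\]
using the shorthand notation \eqref{e:shortnotation}. Then I would observe that the definition of $\lowestface_{(1,d)}$ (Definition~\ref{d:lowestface}) says precisely that a point $w\in \AugNewton(W)$ belongs to $\lowestface_{(1,d)}(\AugNewton(W))$ if and only if $\langle w,(1,d)\rangle = \langle \AugNewton(W),(1,d)\rangle$. Since $(\tau,0)$ is a point of $\AugNewton(W)$ by Definition~\ref{d:minimalheightAugNewton}, applying this equivalence with $w=(\tau,0)$ gives
\[
(\tau,0)\in\lowestface_{(1,d)}(\AugNewton(W)) \iff \langle (\tau,0),(1,d)\rangle = \Trop(W)(d).
\]

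Finally I would compute the pairing $\langle (\tau,0),(1,d)\rangle = \tau\cdot 1 + \langle 0,d\rangle = \tau$, which is independent of $d$. Substituting on the left-hand side of the displayed equivalence yields $\Trop(W)(d)=\tau$, which is exactly the condition on the right-hand side of the lemma. Both directions are subsumed in this one string of equivalences.

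There is no substantive obstacle here: the content of the lemma is that the constant function $d\mapsto \langle (\tau,0),(1,d)\rangle$ attains the minimum on $\AugNewton(W)$ in the direction $(1,d)$ exactly when the overall minimum equals~$\tau$. The only mild care is to note that although the minimal-height condition ensures $(\tau,0)\in\AugNewton(W)$, we do not need $(\tau,0)$ to be a vertex — membership in a face (in the sense of Remark~\ref{r:lowestface}) is enough — and completeness of $W$ is used only to guarantee, via Remark~\ref{r:transversalF}, that $(\tau,0)$ is the unique point of $\AugNewton(W)$ lying above $0\in\Newton(W)$, so the statement ``$(\tau,0)\in\lowestface_{(1,d)}(\AugNewton(W))$'' is unambiguous.
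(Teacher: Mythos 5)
Your proof is correct and is essentially the same argument as the paper's: the paper also reduces everything to the identity $\Trop(W)(d)=\langle\AugNewton(W),(1,d)\rangle$ together with Definition~\ref{d:lowestface}, merely splitting the two directions (one of which it recycles from the proof of Lemma~\ref{l:max}) rather than writing a single chain of equivalences. (Only your closing aside is slightly off: completeness is needed so that $\AugNewton(W)$ meets $\R\oplus\{0\}$ and $\tau$ exists, not to make $(\tau,0)$ the unique point above $0$ — but this does not affect the argument.)
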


\begin{proof}
By Lemma~\ref{l:max}, $\tau$ is the maximal value of $\Trop(W)$. We saw in the proof of Lemma~\ref{l:max} that if $\lowestface_{(1,d)}(\AugNewton(W))$ contains $(\tau, 0)$, then this maximal value $\tau$ of $\Trop(W)$ is attained at $d$. Thus the implication $\impliedby$ is already proved. For the other direction suppose that 
$\Trop(W)(d)=\tau$. Then, since
$
\Trop(W)(d)=\langle\AugNewton(W),(1,d)\rangle,
$
see \eqref{e:TropAug}, we have that
\[
\langle\AugNewton(W),(1,d)\rangle=\tau=\langle(\tau,0),(1,d)\rangle.
\]
This implies that $(\tau,0)\in\lowestface_{(1,d)}(\AugNewton(W))$, see Definition~\ref{d:lowestface}. Thus we have proved the lemma.
\end{proof}

\begin{remark}\label{r:facetcase} 
Let $F$ denote the {\it minimal-dimensional} face of $\AugNewton(W)$ containing the point $(\tau,0)$. In the case that $F$ has codimension $1$ in $\R\oplus M_\R$, it turns out that there is a {\it unique} element $d\in N_\R$ for which $\Trop(W)(d)=\tau$. Namely, if $\Trop(W)(d)=\tau$ then Lemma~\ref{l:tauattainment} implies $F\subseteq \lowestface_{(1,d)}(\AugNewton(W))$. For dimension reasons this implies that $F=\lowestface_{(1,d)}(\AugNewton(W))$. But then $\lowestface_{(1,d)}(\AugNewton(W))$ has codimension $1$ in $\R\oplus M_\R$, and therefore it determines the vector $(1,d)$ in $\R\oplus N_\R$ (which must be perpendicular to its `lowest face') uniquely. 
\end{remark}

\subsection{The tropical critical conditions}

We now introduce a set of conditions which we will show must be satisfied by the valuation of any positive critical point of~$W$.

\begin{defn}[The functions $\delta_i$] \label{d:deltai} For $i=1,\dotsc, n$ we consider the piecewise linear functions $\delta_i:N_\R\to \R_{\ge 0}$ associated to the summands of $W=\sum_i\gamma_i x^{v_i}$ given by
\begin{equation}\label{e:deltai}
\delta_i(d):=c_i+\langle v_i,d\rangle-\Trop(W)(d),
\end{equation} 
where $c_i=\ValK(\gamma_i)$. 
Sometimes $d$ will be fixed and we may write $\delta_i$ for $\delta_i(d)$ in this context.
\end{defn}

\begin{remark} Suppose $x\in T(\KK_{>0})$ has $\Val(x)=d$. It will be useful at times to group the summands of $W(x)$ according to the valuations, $\ValK(\gamma_i x^{v_i})=c_i+\langle v_i,d\rangle$.  
Note that the minimal valuation achieved by any $ \gamma_i x^{v_i}$ is $\Trop(W)(d)$, and we have
\begin{equation}
\ValK(\gamma_i x^{v_i})=\Trop(W)(d)+\delta_i(d).
\end{equation}  
Thus grouping summands of $W(x)$ by their valuations is equivalent to grouping them according to the value of $\delta_i(d)$.  
\end{remark}

\begin{defn}[Tropical critical conditions for $W$] \label{d:tropcritcond}
We say that $d\in N_\R$ satisfies the {\it tropical critical conditions for $W$}  if
\begin{equation}\label{e:tropcritcond}
\ConvexHull^\circ(\{v_i\mid\delta_i(d)=\varepsilon\})\cap\Span(\{v_i\mid\delta_i(d)<\varepsilon\})\ne\emptyset, 
\end{equation}
 for all $\varepsilon\ge 0$. Here $\ConvexHull^\circ$ stands for the relative interior of the convex hull, by which we mean the set of linear combinations $\sum_{i=1}^n r_i v_i$ where $\sum_{i=1}^n r_i=1$ and all $r_i$ are {\it strictly} positive. We set $\ConvexHull^\circ(\emptyset)=\{0\}$.  Also $\Span(\emptyset)=\{0\}$.
\end{defn}

\begin{remark}\label{r:epsilon0}
Observe that the condition \eqref{e:tropcritcond} is automatically satisfied for $d$ if $\varepsilon$ does not equal to any of the values $\delta_i(d)$, since then the convex hull is just $\{0\}$ and automatically lies in the span. The condition \eqref{e:tropcritcond} is also automatically satisfied whenever $\Span(\{v_i\mid\delta_i(d)<\varepsilon\})=M_\R$, as happens for large enough $\varepsilon$ if $\Newton(W)$ is full-dimensional.

If $\varepsilon=0$, on the other hand, then we always have a nontrivial convex hull in \eqref{e:tropcritcond}, since for any $d$ there exist some $i$ such that $\delta_i(d)=0$. In this case the span in  \eqref{e:tropcritcond} is automatically  $\{0\}$, and the associated tropical critical condition says that 
\[0\in\ConvexHull^\circ(\{v_i\mid\delta_i(d)=0\}).\] 

More generally, if $\{v_i\mid\delta_i(d)=\varepsilon\}$ is nonempty and $\varepsilon >0$, then the tropical critical condition for $\varepsilon$ says that
\begin{equation}\label{e:genep}
0\in \ConvexHull^\circ(\{\bar v_i\mid\delta_i(d)=\varepsilon\}),
\end{equation}
where  
 $\bar v_i$ is the image of $v_i$ under the quotient map
\[
 \Span(\{v_i\mid\delta_i(d)\le\varepsilon\})\to
 \Span(\{v_i\mid\delta_i(d)\le\varepsilon\})/\Span(\{v_i\mid\delta_i(d)<\varepsilon\}), 
 \]
 and the convex hull in \eqref{e:genep} is in the quotient space above. 
\end{remark}

\begin{lemma}\label{l:btropcrit} Let $W$ be a positive Laurent polynomial. If $p$ is a {\it positive} critical point of $W$,  
then $b=\Val(p)$ satisfies the tropical critical conditions for $W$.
\end{lemma}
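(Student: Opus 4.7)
The plan is to rewrite the critical point equations in an intrinsic form indexed by cocharacters, then extract coefficients of each power of $t$ to read off the tropical critical conditions level by level.

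First, I would express the critical point condition without choosing coordinates: $p \in T(\KK_{>0})$ being a critical point of $W$ is equivalent to the vanishing of all logarithmic derivatives, i.e.\ for every $n \in N$ we have
\[
\sum_{i=1}^n \langle v_i, n\rangle\, \gamma_i\, p^{v_i} = 0 \quad \text{in } \KK.
\]
Set $\lambda := \Trop(W)(b)$, where $b = \Val(p)$. Since $p \in T(\KK_{>0})$ and $\gamma_i \in \KK_{>0}$, the element $\gamma_i p^{v_i}$ lies in $\KK_{>0}$ and has $\KK$-valuation exactly $\lambda + \delta_i(b)$, with \emph{positive real} leading coefficient $\alpha_i := \Coeff(\gamma_i p^{v_i}) \in \R_{>0}$.

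Next, for each $\varepsilon \geq 0$, I would extract the coefficient of $t^{\lambda + \varepsilon}$ from the above identity. Denote by $F_{i,\varepsilon} \in \R$ the coefficient of $t^{\lambda+\varepsilon}$ in the Puiseux expansion of $\gamma_i p^{v_i}$. Then $F_{i,\varepsilon} = 0$ when $\delta_i(b) > \varepsilon$; $F_{i,\varepsilon} = \alpha_i > 0$ when $\delta_i(b) = \varepsilon$; and $F_{i,\varepsilon}$ is an arbitrary real number when $\delta_i(b) < \varepsilon$. The vanishing of the coefficient of $t^{\lambda+\varepsilon}$ for every cocharacter $n$ yields, as an identity of elements of $M_\R$,
\[
\sum_{i:\, \delta_i(b)=\varepsilon} \alpha_i\, v_i \;=\; -\sum_{i:\, \delta_i(b)<\varepsilon} F_{i,\varepsilon}\, v_i.
\]

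Finally, I would translate this identity into the tropical critical condition \eqref{e:tropcritcond}. If $\{i : \delta_i(b) = \varepsilon\}$ is empty the condition holds trivially (both sides of the previous display are $0$, and $0 \in \ConvexHull^\circ(\emptyset) \cap \Span(\cdot)$ by our conventions). Otherwise $S := \sum_{i:\,\delta_i(b)=\varepsilon} \alpha_i > 0$, and dividing by $S$ exhibits the point
\[
\frac{1}{S}\sum_{i:\,\delta_i(b)=\varepsilon} \alpha_i\, v_i
\]
as an element of $\ConvexHull^\circ(\{v_i : \delta_i(b)=\varepsilon\})$ which simultaneously lies in $\Span(\{v_i : \delta_i(b)<\varepsilon\})$ by the displayed identity. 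This is precisely \eqref{e:tropcritcond}.

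The only real obstacle is bookkeeping of signs: at valuation level $\lambda + \varepsilon$, the terms coming from indices $i$ with $\delta_i(b) = \varepsilon$ contribute via their \emph{leading} coefficients (which positivity of $p$ and of $\gamma_i$ guarantees are in $\R_{>0}$), while the terms with $\delta_i(b) < \varepsilon$ contribute via higher-order Puiseux coefficients whose signs are uncontrolled. The key point is that the former have a definite sign and thus lie in the relative interior of the convex hull, while the latter are merely constrained to lie in the appropriate span — exactly what \eqref{e:tropcritcond} demands.
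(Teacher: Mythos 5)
Your proposal follows essentially the same route as the paper: both express the critical equations via the vanishing of all logarithmic derivatives (the paper packages them as the $M_\KK$-valued gradient function $G$), expand each $\gamma_i p^{v_i}$ at valuation level $\Trop(W)(b)+\varepsilon$, and use positivity of the leading coefficients of the $\delta_i(b)=\varepsilon$ terms to divide and produce a point of $\ConvexHull^\circ(\{v_i\mid\delta_i(b)=\varepsilon\})$ lying in $\Span(\{v_i\mid\delta_i(b)<\varepsilon\})$. One small correction: the coefficients of elements of $\KK$ are complex, and positivity only constrains the leading coefficient, so your $F_{i,\varepsilon}$ for $\delta_i(b)<\varepsilon$ are a priori in $\C$ rather than $\R$; the identity therefore first places your convex-hull point in the complex span, and you need the extra (one-line) observation, made explicitly in the paper, that a real vector lying in $\Span_\C$ of real vectors lies in $\Span_\R$ of them.
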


\begin{defn}[The {\it gradient function} of $W$]\label{d:gradient}
Recall that $W=\sum_{i=1}^n \gamma_i x^{v_i}$. Associated to any $u\in N$ consider the associated $T$-invariant vector field $\partial_u$ acting by $\partial_u(x^v)=\langle v,u\rangle x^v$. We have
\begin{equation}\label{e:genWder}
\partial_u W=\sum_{i=1}^n \gamma_i \langle v_i,u\rangle  x^{v_i}.
\end{equation}
Let $M_\C=M_\R\otimes\C$ and $M_\KK=M_\R\otimes\KK$. The derivatives of $W$ all together are encoded in the $M_\KK$-valued function  
\begin{equation}
\label{e:Gofx}G(x)=\sum_{i=1}^n \gamma_i   x^{v_i} \, v_i,  
\end{equation}
for which $\langle G(x),u\rangle =\partial_u W (x)$. We call $G$ the gradient function of $W$.
\end{defn}

\begin{proof}[Proof of Lemma~\ref{l:btropcrit}]
First observe that $x$ is a critical point of $W$ if and only if  $G(x)=0$, where $G$ is the gradient function from Definition~\ref{d:gradient}.
Let us assume $x\in T(\KK_{>0})$ with valuation $\Val(x)=d\in N_\R$. 
Note that we have
\[
\ValK(\gamma_i x^{v_i})=c_i+\langle v_i,d\rangle,
\]
and recall that
\begin{equation}
\label{e:Gofxexpansion}
\Trop(W)(d)=\min(\{c_i+\langle v_i,d\rangle\mid i=1,\dotsc n\}),
\end{equation}
as in Example~\ref{ex:W}. 
Therefore we can expand $G(x)\in M_\KK$ in terms of $t$ giving
\[G(x)=t^{\Trop(W)(d)}\sum_{\varepsilon\ge 0} g_\varepsilon(x) t^{\varepsilon},   
\]
for vector-valued coefficients $g_{\varepsilon}(x)\in  M_\C$. The point $x$ is a critical point of $W$ if and only if $g_{\varepsilon}(x)=0$  for all $\varepsilon\ge 0$.

Now suppose $p$ is a critical point of $W$ with $\Val(p)=b$, and we have fixed $\varepsilon\ge 0$. Expanding $ \gamma_i   p^{v_i}$ with regard to $t$ we get an element of $\KK$ of the form
\begin{equation}\label{e:lambdai}
 \gamma_i   p^{v_i}=t^{c_i+\langle v_i,b\rangle}\lambda_i=t^{c_i+\langle v_i,b\rangle}(\sum_{\delta\ge 0}\lambda_{i,\delta} t^{\delta}),
 \end{equation} 
 and it follows that
\begin{equation}\label{e:Glambdai}
G(p)=\sum_{i=1}^n t^{c_i+\langle v_i,b\rangle}(\sum_{\delta\ge 0}\lambda_{i,\delta} t^{\delta}) v_i.
 \end{equation} 
 Then $\lambda_{i,\delta}$ contributes a summand $\lambda_{i,\delta} v_i$ to $g_{\varepsilon}(p)$ precisely if $\varepsilon + \Trop(W)(b)=\delta+ c_i+\langle v_i,b\rangle $. Let  $\delta_i=\delta_i(b)$, compare Definition~\ref{d:deltai}. Then the condition for $\lambda_{i,\delta}$ to contribute to $g_{\varepsilon}(p)$ becomes $\delta=\varepsilon-\delta_i$, and it implies $\delta_i\le\varepsilon$.
 Therefore we have
\begin{equation}\label{e:critPtepsilon}
g_{\varepsilon}(p)=\sum_{\{i\mid  \delta_i\le\varepsilon\}} \lambda_{i,\varepsilon -\delta_i} v_i=0.
\end{equation}
To summarise, if $b=\ValK(p)$ is the valuation of a critical point $p$ of $W$, and the $\lambda_i\in\KK$  are defined from $p$ and $W$ as above, then the equation \eqref{e:critPtepsilon} must be satisfied for every $\varepsilon\ge 0$, and vice versa. 
 
Rewriting the right hand equality in \eqref{e:critPtepsilon} we get
\begin{equation}\label{e:keyeq}
\sum_{\{i\mid \delta_i=\varepsilon\}} \lambda_{i,0} v_i =- \sum_{\{i\mid  \delta_i<\varepsilon\}} \lambda_{i,\varepsilon-\delta_i} v_i.
\end{equation}
We now use that $W$ is positive, and $p$ is a {\it positive} critical point. Thus by positivity of $p^{v_i}$ and $\gamma_i$ in \eqref{e:lambdai} we have that $\lambda_i\in \KK_{>0}$, and therefore $\lambda_{i,0}>0$ for every $i$. Dividing both sides of \eqref{e:keyeq} by $\sum_{\{i\mid  \delta_i=\varepsilon\}}\lambda_{i,0}$ we get a point 
\[
v:=\frac{\sum_{\{i\mid \delta_i=\varepsilon\}} \lambda_{i,0} v_i }{\sum_{\{i\mid \delta_i= \varepsilon\}}\lambda_{i,0}}=-\frac{ \sum_{\{i\mid \delta_i<\varepsilon\}} \lambda_{i,\varepsilon-\delta_i} v_i}{\sum_{\{i\mid \delta_i=\varepsilon\}}\lambda_{i,0}}.
\]
By the first description of $v$ we see that $v$  lies in $\ConvexHull^\circ(\{v_i\mid\delta_i(b)=\varepsilon\})$ inside $M_\R$. The second description tells us that $v\in \Span_\C(\{v_i\mid\delta_i(b)<\varepsilon\})$. In fact since $v\in M_\R$, the imaginary part of $v$ must be zero and we have $v\in \Span_\R(\{v_i\mid\delta_i(b)<\varepsilon\})$. Therefore the intersection 
\[
\ConvexHull^\circ(\{v_i\mid\delta_i(b)=\varepsilon\})\cap\Span_\R(\{v_i\mid\delta_i(b)<\varepsilon\})
\]
is nonempty. This is true for any $\varepsilon\ge 0$, thus $b=\Val(p)$ satisfies the tropical critical conditions for $W$ as claimed.
\end{proof}

The next lemma interprets the $\varepsilon=0$ tropical critical conditions.
 
\begin{lemma}\label{l:tropcritmax} Let $W$ be a positive, complete  Laurent polynomial, and let  $\tau$ be the minimal height above $0$ of $\AugNewton(W)$. The following two conditions on  $d\in N_\R$ are equivalent. 
\begin{enumerate}
\item  $d$ satisfies the $\varepsilon=0$ tropical critical conditions for $W$ from Definition~\ref{d:tropcritcond}.
\item  $\lowestface_{(1,d)}(\AugNewton(W))$ is the minimal face $F$ of $\AugNewton(W)$ containing $(\tau,0)$. In particular $\Trop(W)(d)=\tau$ by Lemma~\ref{l:tauattainment}.
\end{enumerate}
\end{lemma}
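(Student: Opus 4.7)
The plan is to unwind both conditions and observe that they correspond under the projection $\pr\colon\R\oplus M_\R\to M_\R$ of Definition~\ref{d:aug}. Setting $L:=\lowestface_{(1,d)}(\AugNewton(W))$, I would first note that because $\AugNewton(W)=\ConvexHull(\{(c_i,v_i)\})$ and $\langle (c_i,v_i),(1,d)\rangle = c_i+\langle v_i,d\rangle\ge\Trop(W)(d)$ with equality exactly when $\delta_i(d)=0$, one has
\[
L=\ConvexHull(\{(c_i,v_i)\mid\delta_i(d)=0\}),\qquad \pr(L)=\ConvexHull(\{v_i\mid\delta_i(d)=0\}).
\]
Using the standard description of the relative interior of the convex hull of a finite set as the set of convex combinations with strictly positive coefficients, this gives $\pr(L^\circ)=\pr(L)^\circ$; explicitly, $0\in\pr(L)^\circ$ if and only if there exist $\lambda_i>0$, $\sum\lambda_i=1$, $\sum\lambda_iv_i=0$ indexed by $\{i\mid\delta_i(d)=0\}$, in which case $(c,0):=\sum\lambda_i(c_i,v_i)\in L^\circ$ for $c:=\sum\lambda_ic_i$.

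For $(2)\Rightarrow(1)$: by definition of the minimal face, $(\tau,0)\in F^\circ=L^\circ$, and projecting by $\pr$ yields $0\in\pr(L)^\circ=\ConvexHull^\circ(\{v_i\mid\delta_i(d)=0\})$. Since $\Span(\emptyset)=\{0\}$ (Remark~\ref{r:epsilon0}), this is exactly the $\varepsilon=0$ tropical critical condition. For $(1)\Rightarrow(2)$: pick $\lambda_i>0$ as above, so that $(c,0)\in L^\circ$. Two inequalities now pin down $c=\tau$. On one hand $(c,0)\in L$ forces $c=\langle(c,0),(1,d)\rangle=\Trop(W)(d)$, and $(\tau,0)\in\AugNewton(W)$ gives $\Trop(W)(d)\le\tau$, so $c\le\tau$. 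On the other hand $(c,0)\in\AugNewton(W)$ together with the definition of $\tau$ as the minimal height of $\AugNewton(W)$ above $0$ forces $c\ge\tau$. Therefore $(\tau,0)=(c,0)\in L^\circ$, and since every point of a polytope lies in the relative interior of a unique face (namely its minimal containing face), we conclude $L=F$. The accompanying equality $\Trop(W)(d)=\tau$ in $(2)$ then follows either directly from $c=\Trop(W)(d)=\tau$ or from Lemma~\ref{l:tauattainment}.

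The only real step of content is the two-sided squeeze $\Trop(W)(d)=c\le\tau\le c$, which uses both that $L$ is a level set of $\langle\cdot,(1,d)\rangle$ and that $\tau$ is the minimal height of $\AugNewton(W)$ above $0$. Everything else is bookkeeping: identifying $L$ with the convex hull of the vertices singled out by $\delta_i(d)=0$, and passing relative interiors through the linear map $\pr$.
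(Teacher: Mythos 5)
Your proof is correct and follows essentially the same route as the paper's: both identify $\lowestface_{(1,d)}(\AugNewton(W))$ with $\ConvexHull(\{(c_i,v_i)\mid\delta_i(d)=0\})$ (the paper's Claim, which you state slightly more generally without assuming $\Trop(W)(d)=\tau$), lift the strict convex combination to a point $(c,0)$, pin down $c=\Trop(W)(d)=\tau$ by the same two-sided squeeze, and conclude via the relative-interior characterisation of the minimal face. No issues.
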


\begin{proof}
Recall that $d\in N_\R$ satisfies the $\varepsilon=0$ tropical critical conditions for $W$  if
 \begin{equation}\label{e:ep0tropcrit}
 0\in\ConvexHull^\circ(\{v_i\mid \delta_i(d)=0
  \}),
 \end{equation}
 see Remark~\ref{r:epsilon0}. We prove first (1) $\implies$ (2).
 Let $0=\sum_{\{i\mid \delta_i=0\}} r_i v_i$ be an expression of $0\in M_\R$ as strict convex combination, so $r_i\in\R_{>0}$ and $\sum r_i=1$. Then we define $(c,0):= \sum_{\{i\mid \delta_i=0\}} r_i (c_i,v_i)$.  Note that $(c,0)$ clearly lies in $\AugNewton(W)$. We begin by proving that $(c,0)=(\tau,0)$ and lies in $\lowestface_{(1,d)}(\AugNewton(W))$.
 
 Note that  $\delta_i(d)=0$ implies that $\Trop(W)(d)=\langle (c_i,v_i),(1,d)\rangle$. Thus we have
\[\sum_{\{i\mid \delta_i=0\}} r_i \langle(c_i,v_i),(1,d)\rangle=\sum_{\{i\mid \delta_i=0\}}r_i\Trop(W)(d)=\Trop(W)(d).\]  

On the other hand
\[\sum_{\{i\mid \delta_i=0\}} r_i \langle(c_i,v_i),(1,d)\rangle=  
\langle(c,0),(1,d)\rangle=c.\]

Thus $c$ is a value of $\Trop(W)$; namely $c=\Trop(W)(d)$. By Lemma~\ref{l:max} we know that the only point of the form $(c,0)$ in $\AugNewton(W)$ for which $c$ is a value of $\Trop(W)$ is the `lowest' point $(\tau, 0)$. Thus $c=\tau$ and, as also shown in Lemma~\ref{l:max}, this is the maximal value attained by $\Trop(W)$.

So far we have shown that $\Trop(W)(d)=\tau$. By Lemma~\ref{l:tauattainment}  we now see that $(\tau,0)\in\lowestface_{(1,d)}(\AugNewton(W))$. Let us show that $(\tau,0)$ lies in the relative interior of this face. Then the minimality of the face will follow and the proof of (2) will be complete.
 We use the following Claim. 
\vskip .2cm
\begin{paragraph}{\it Claim:} If  $\Trop(W)(d)=\tau$ then we have that
\begin{equation}
\label{e:lowestfacetau}
\lowestface_{(1,d)}(\AugNewton(W))=\ConvexHull(\{(c_i,v_i)\mid \delta_i(d)=0\}).
\end{equation}
\end{paragraph}

\begin{paragraph}{\it Proof of Claim:}
Since  $\Trop(W)(d)=\tau$, we have that
\[
\lowestface_{(1,d)}(\AugNewton(W))=\{(c,v)\in\AugNewton(W)\mid \langle (c,v),(1,d)\rangle=\tau\},
\]
by Definition~\ref{d:lowestface}. 
As a face of $\AugNewton(W)$ this convex set can also be expressed as a convex hull by
\begin{equation*}
\lowestface_{(1,d)}(\AugNewton(W))=\ConvexHull(\{(c_i,v_i)\mid \langle (c_i,v_i),(1,d)\rangle=\tau\}).
\end{equation*}
On the other hand
\[
\langle (c_i,v_i),(1,d)\rangle =\tau \iff  \delta_i(d)=0,
\]
since $\tau=\Trop(W)(d)$. Therefore \eqref{e:lowestfacetau} holds and 
this proves the Claim. 
\end{paragraph}
\vskip .2cm
Now the $\varepsilon=0$ tropical critical condition \eqref{e:ep0tropcrit} implies,  that
\[
(\tau,0)\in\ConvexHull^\circ(\{(c_i,v_i)\mid \delta_i(d)=0\}).
\]
Thus the $\varepsilon=0$ tropical critical condition can be interpreted as saying that $(\tau,0)$ lies in the relative interior of $\lowestface_{(1,d)}(\AugNewton(W))$, by  \eqref{e:lowestfacetau}. 

We now prove (2)$\implies$(1). Assume we have that the point $(\tau,0)$ lies in the interior of $\lowestface_{(1,d)}(\AugNewton(W))$. Then $\Trop(W)(d)=\tau$ and applying the Claim from above, we have that $\lowestface_{(1,d)}(\AugNewton(W))$ is described by \eqref{e:lowestfacetau}. Therefore we have that
\[
(\tau,0)\in\ConvexHull^\circ(\{(c_i,v_i)\mid \delta_i(d)=0\}).
\]
This implies the tropical critical condition \eqref{e:ep0tropcrit} by projection to $M_\R$. 
\end{proof}

\begin{remark} 
Note that Lemma~\ref{l:tropcritmax} implies the existence of a $d\in N_\R$ satisfying the $\varepsilon=0$ tropical critical condition. 
\end{remark}
 
\begin{lemma}\label{l:codim1case}
Let $W$ be a positive, complete   Laurent polynomial and  let  $\tau$ be the minimal height above $0$ of $\AugNewton(W)$.  Let  $F$ be the minimal face of $\AugNewton(W)$ containing the point $(\tau,0)$. If $F$ has codimension $1$ in $\R\oplus M_\R$, then there exists a unique point $d\in N_\R$ satisfying the tropical critical conditions.   
\end{lemma}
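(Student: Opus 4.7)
The plan is to split the lemma into uniqueness and existence, both of which will follow quite directly from the structural results already established. For uniqueness, I observe that any $d \in N_\R$ satisfying the tropical critical conditions must in particular satisfy the $\varepsilon = 0$ condition, which by Lemma~\ref{l:tropcritmax} is equivalent to $\lowestface_{(1,d)}(\AugNewton(W)) = F$. Since $F$ has codimension $1$, Remark~\ref{r:facetcase} already observes that this pins $d$ down uniquely, so there is at most one candidate $d^* \in N_\R$.

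For existence, I would check that this $d^*$ satisfies all the tropical critical conditions. The $\varepsilon = 0$ condition holds by Lemma~\ref{l:tropcritmax} together with the fact that $(\tau,0)$ lies in the relative interior of $F$ by the definition of $F$ as the minimal face containing it. For $\varepsilon > 0$, by Remark~\ref{r:epsilon0} it suffices to verify that $\Span(\{v_i \mid \delta_i(d^*) < \varepsilon\}) = M_\R$; I would do so by proving the stronger statement that the subset $\{v_i \mid \delta_i(d^*) = 0\}$ already spans $M_\R$. These vectors are exactly the images under $\pr$ of the vertices of $F$, and their convex hull is $\pr(F)$. Once I show $\pr|_F$ is injective, $\pr(F)$ will be an $r$-dimensional polytope in $M_\R$ containing $0 = \pr(\tau,0)$ in its interior, and its vertices will then span $M_\R$ as a real vector space, finishing existence.

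The main technical point is therefore the injectivity of $\pr|_F$, which is where the codimension-$1$ hypothesis on $F$ interacts with the completeness of $W$. If $\pr|_F$ were not injective, the one-dimensional kernel $\R \oplus \{0\}$ would lie in the affine direction of the $r$-dimensional face $F$; using that $(\tau,0)$ sits in the relative interior of $F$, a small translate of $(\tau,0)$ in that direction would still belong to $F$, producing a second point of $F \cap \pr\inv(0)$ and contradicting Remark~\ref{r:transversalF}. Once this small but essential injectivity observation is in place, the rest is bookkeeping based on Lemma~\ref{l:tropcritmax}, Remark~\ref{r:facetcase}, and Remark~\ref{r:epsilon0}.
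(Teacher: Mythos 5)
Your proposal is correct and follows essentially the same route as the paper's proof: uniqueness from the $\varepsilon=0$ condition via Lemma~\ref{l:tropcritmax} and Remark~\ref{r:facetcase}, and existence by checking the $\varepsilon>0$ conditions through the full-dimensionality of $\pr(F)$, which forces $\Span(\{v_i\mid\delta_i(d^*)<\varepsilon\})=M_\R$. The only difference is cosmetic: you spell out the injectivity of $\pr|_F$ via the relative-interior argument, whereas the paper obtains the full-dimensionality of $\pr(F)$ directly from the transversality statement in Remark~\ref{r:transversalF}.
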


\begin{proof}
If $d$ satisfies the tropical critical conditions, then in particular it satisfies the tropical critical condition for $\varepsilon=0$. Therefore by Lemma~\ref{l:tropcritmax} we see that $\Trop(W)(d)=\tau$. This together with the codimension $1$ condition implies  that $d$ is uniquely determined, see Remark~\ref{r:facetcase}. Moreover, $d$, if it exists,  is the unique element of $N_\R$ for which 
\begin{equation}\label{e:defofd}
\lowestface_{(1,d)}(\AugNewton(W))=F.
\end{equation}

Now suppose $d$ is the element defined by \eqref{e:defofd}. It automatically satisfies the tropical critical condition for $\varepsilon=0$, by Lemma~\ref{l:tropcritmax}. Let us assume that $\varepsilon>0$. 
Note that by \eqref{e:defofd} combined with \eqref{e:lowestfacetau} from the proof of Lemma~\ref{l:tropcritmax},  
\[
F=\ConvexHull(\{(c_i,v_i)\mid \delta_i(d)=0\}).
\]
Therefore
\[
\pr(F)=\ConvexHull(\{v_i\mid \delta_i(d)=0\}).
\]
Then clearly, since $\varepsilon>0$, we have that 
\[
\pr(F)\subset \Span(\{v_i\mid\delta_i(d)<\varepsilon\}).
\]
By our assumptions, $F$ has codimension $1$ in $\R\oplus M_\R$, and we have that $F$ is transversal to $\pr\inv(0)$, see Remark~\ref{r:transversalF}. Therefore the projection, $\pr(F)$, is full-dimensional in $M_\R$, that is, there is no proper linear subspace of $M_\R$ which contains $\pr(F)$. It follows that
\[
\Span(\{v_i\mid\delta_i(d)<\varepsilon\})=M_\R.
\]  
This implies that the tropical critical condition \eqref{e:tropcritcond} is satisfied for $\varepsilon>0$.
\end{proof}

We now prove uniqueness in general.

\begin{lemma} [Uniqueness]\label{l:uniqueness} Suppose $W$ is a positive, complete  Laurent polynomial over $\KK$ as above. There is at most one element $d\in N_\R$ satisfying the  tropical critical conditions for $W$ from Definition~\ref{d:tropcritcond}. 
\end{lemma}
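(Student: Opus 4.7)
The plan is to assume that $d, d' \in N_\R$ both satisfy the tropical critical conditions for $W$ and show $e := d' - d$ must vanish. First, applying Lemma~\ref{l:tropcritmax} to the $\varepsilon = 0$ condition at both $d$ and $d'$ shows $\Trop(W)(d) = \Trop(W)(d') = \tau$, from which one has $\delta_i(d') - \delta_i(d) = \langle v_i, e \rangle$ for every $i$.

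List the distinct values of $\delta_i(d)$ in increasing order as $0 = \varepsilon_0 < \varepsilon_1 < \cdots < \varepsilon_L$, and define $\varepsilon'_k$ to be the smallest value of $\delta_i(d')$ strictly greater than $\varepsilon_{k-1}$ (with the convention $\varepsilon_{-1} := -\infty$, so that $\varepsilon'_0 = 0$). I will prove by induction on $k$ that $\varepsilon'_k$ exists and equals $\varepsilon_k$, that the sets $\{v_i \mid \delta_i(d) = \varepsilon_k\}$ and $\{v_i \mid \delta_i(d') = \varepsilon'_k\}$ coincide, and that $\langle v_i, e \rangle = 0$ for every $v_i$ in this common set. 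Once this is established, $\langle v_i, e \rangle = 0$ for all $i$; since $W$ is complete, $\Span\{v_i\} = M_\R$ and hence $e = 0$, i.e., $d = d'$.

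For the inductive step at level $k$, the inductive hypothesis implies $\{v_j \mid \delta_j(d) < \varepsilon_k\} = \{v_j \mid \delta_j(d') < \varepsilon'_k\}$ and that the linear span of this common lower set lies inside $e^\perp$. The tropical critical condition at $\varepsilon_k$ for $d$ supplies strict convex weights $r_i > 0$ with $\sum r_i = 1$, summed over $v_i$ with $\delta_i(d) = \varepsilon_k$, such that $\sum r_i v_i \in \Span\{v_j \mid \delta_j(d) < \varepsilon_k\} \subseteq e^\perp$. Pairing with $e$ yields
\[
0 \;=\; \sum_i r_i \langle v_i, e \rangle \;=\; \sum_i r_i \bigl( \delta_i(d') - \varepsilon_k \bigr) \;\ge\; \varepsilon'_k - \varepsilon_k,
\]
since each such $v_i$ lies outside the common lower set and hence $\delta_i(d') \ge \varepsilon'_k$. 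Thus $\varepsilon'_k \le \varepsilon_k$; the symmetric argument using the $\varepsilon'_k$ condition at $d'$ yields $\varepsilon_k \le \varepsilon'_k$, so equality holds. Then every nonnegative term $\delta_i(d') - \varepsilon_k$ in the vanishing sum must itself vanish, giving $\delta_i(d') = \varepsilon_k$ and $\langle v_i, e \rangle = 0$ for every $v_i$ with $\delta_i(d) = \varepsilon_k$. Swapping the roles of $d$ and $d'$ establishes the reverse containment of the two sets, completing the inductive step.

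The main obstacle is that the level structures of $\delta_i(d)$ and $\delta_i(d')$ need not coincide a priori; this is resolved by the squeeze $\varepsilon'_k \le \varepsilon_k \le \varepsilon'_k$ obtained by applying the tropical critical conditions at $d$ and $d'$ in tandem.
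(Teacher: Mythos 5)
Your proof is correct and follows essentially the same route as the paper's: equate $\Trop(W)(d)=\Trop(W)(d')$ via Lemma~\ref{l:tropcritmax}, write $\delta_i(d')-\delta_i(d)=\langle v_i,d'-d\rangle$, pair the strict convex combination supplied by the tropical critical conditions with $d'-d$, use the inductive hypothesis to kill the lower-span contribution, force termwise vanishing from positivity, apply the symmetric argument with $d$ and $d'$ swapped, and conclude $d=d'$ from completeness. The only difference is bookkeeping: the paper inducts over the values $\min(\delta_i,\delta_i')$ and proves $\delta_i=\delta_i'$ directly, whereas you enumerate the levels of $d$ and recover the matching level structure of $d'$ via the squeeze $\varepsilon_k'\le\varepsilon_k\le\varepsilon_k'$.
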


\begin{proof}

Suppose $d$ and $d'$ in $N_\R$ both satisfy the tropical critical conditions from Definition~\ref{d:tropcritcond}. Let us write $\delta_i$ for $\delta_i(d)$ and $\delta_i'$ for $\delta_i(d')$, compare Definition~\ref{d:deltai}. By Lemma~\ref{l:tropcritmax}, $\Trop(W)(d)=\Trop(W)(d')$. Therefore we have that  
\begin{equation}\label{e:deltadifference}
\delta'_i-\delta_i=\langle v_i,d'\rangle -\langle v_i, d\rangle=\langle v_i, d'-d\rangle.
\end{equation}

\paragraph{{\it Claim:}} If $d$ and $d'$ both satisfy the tropical critical conditions for $W$ then $\delta_i=\delta'_i$ for all $i$.
\vskip .2cm 

 Note that this claim implies the lemma. Namely because of \eqref{e:deltadifference} the claim implies that $\langle v_i, d'-d\rangle=0$ for all $i$. On the other hand since $\Newton(W)$ 
 is full-dimensional, we have that the $v_i$ span all of $M_\R$. Therefore it follows that $d'=d$.   
\vskip .2cm

\paragraph{{\it Proof of the Claim:}} 
Recall that the tropical critical conditions for $d$ say that
\[
\ConvexHull^\circ(\{v_i\mid 
\delta_i=\varepsilon\})\cap\Span(\{v_i\mid \delta_i<\varepsilon\rangle \})\ne\emptyset,
\]
and these conditions are non-trivial only if $\varepsilon\in \{\delta_i\mid i=1,\dotsc, n\}$. Note that the conditions for $d'$ are the same but with $\delta_i$ replaced by $\delta'_i$ everywhere. 
We will prove the claim by induction ``on $\varepsilon$'' as follows.

\vskip .2cm
\paragraph{{\bf Induction hypothesis:}} $\delta'_i=\delta_i$ whenever $\delta_i<\varepsilon$ or $\delta'_i<\varepsilon$. 
\vskip .2cm 
\paragraph{{\bf Induction step:}} Assuming the induction hypothesis, $\delta'_i=\delta_i$ whenever $\delta_i\le \varepsilon$ or $\delta'_i\le \varepsilon$. 
\vskip .2cm

This should be thought of as an induction on $\varepsilon$'s lying in the finite ordered set $\{\min(\delta_i,\delta_i')\mid i=1,\dotsc, n \}$ with natural ordering inherited from $\R_{\ge 0}$. 

\vskip .2cm 
For the start of the induction we observe that the induction hypothesis is automatically satisfied if $\varepsilon=0$. Thus we can move straight to the induction step. For this we only need to show that $\delta'_i=\delta_i$ whenever $\delta_i= \varepsilon$ or $\delta'_i=\varepsilon$. 

By the tropical critical conditions for $d$ we have the existence of a $v\in M_\R$ such that 
\[
v=\sum_{\{ i\mid \delta_i=\varepsilon \}} r_iv_i=\sum_{\{i\mid \delta_i<\varepsilon\}} \nu_iv_i,
\]
where $r_i\in \R_{>0}$ with $\sum r_i=1$, and $\nu_i\in \R$. As a consequence 
\[
\langle v, d'-d\rangle=\sum_{\{ i\mid \delta_i=\varepsilon \}} r_i\langle v_i,d'-d\rangle=\sum_{\{i\mid \delta_i<\varepsilon\}} \nu_i\langle v_i,d'-d\rangle,
\]
and thus by \eqref{e:deltadifference}, 
\[
\sum_{\{ i\mid \delta_i=\varepsilon \}} r_i(\delta'_i-\delta_i)=\sum_{\{i\mid \delta_i<\varepsilon\}} \nu_i(\delta'_i-\delta_i).
\]
By the induction hypothesis the right-hand side vanishes.
Thus also 
\[\sum_{\{ i\mid \delta_i=\varepsilon \}} r_i(\delta'_i-\delta_i)=0.
\]
However the induction hypothesis implies that if $\delta_i=\varepsilon$, then $
\delta'_i\ge\varepsilon$. 
Thus each summand  
$r_i(\delta'_i-\delta_i)\ge 0$ and therefore the summands must individually vanish. Since $r_i$ is non-zero, it follows that $\delta'_i=\delta_i$ if $\delta_i=\varepsilon$. 

If we reverse the roles of $d$ and $d'$ above, using instead the tropical critical conditions for $d'$, then the same argument will also prove  that $\delta_i'=\delta_i$ if $\delta'_i=\varepsilon$. Thus the induction step is complete. 
\end{proof}

\subsection{Construction of a canonical point in $N_\R$ associated to $W$}\label{s:critconstr}
In this subsection we prove existence of a point satisfying the tropical critical conditions associated to a positive, complete  Laurent polynomial $W$.
The proof involves an inductive construction and therefore we introduce some notation for our general set-up.

\begin{defn}[Newton datum]\label{d:goodNewton} Suppose that we have a short exact sequence of real vector spaces
\begin{equation}\label{e:goodLaurent}
0\to\R\overset\eta\to V\overset\beta\to U\to 0,
\end{equation}
together with a finite  subset   $\mathcal W=\{w_i\}$ of $V$. We call  the tuple $\Xi=(U,V,\eta,\beta,\mathcal W)$ a {\it Newton datum}, and refer to $\NDelta:=\ConvexHull(\beta(\mathcal W))$ as the {\it Newton polytope} of $\Xi$. We call  $\Xi=(U,V,\eta,\beta,\mathcal W)$ a {\it complete  Newton datum} if $\NDelta$ is a full-dimensional polytope that contains $0$ in its interior. We refer to $ \AugDelta:=\ConvexHull(\mathcal W)$ as the {\it full polytope} of $\Xi$. If we have a splitting of the exact sequence \eqref{e:goodLaurent} then we say that the Newton datum $\Xi$ is {\it split}. 
 \end{defn}

\begin{example}[The split complete  Newton datum $\Xi_W$ of $W$]\label{ex:XiW}
Let $W:T\to\KK$ be a positive, complete  Laurent polynomial as in Definition~\ref{d:good}, and recall Definition~\ref{d:aug}. Then we get a complete  Newton datum by setting $V=\R\oplus M_\R$, $U=M_\R$, $\beta=\pr$, choosing the map $\eta:\R\to \R\oplus M_\R$ to be $c\mapsto (c,0)$, and setting $\mathcal W=\{(c_i,v_i)\mid i=1,\dotsc, n\}$.  In this case $\AugDelta=
\AugNewton(W)$ and $\NDelta=\Newton(W)$.  This complete  Newton datum comes with a splitting $\tilde\beta:U\to V$ given by $v\mapsto(0,v)$. We denote the resulting split complete  Newton datum associated to $W$  by $\Xi_W$. 
\end{example}

Our initial goal is to construct, given a general complete  Newton datum $\Xi=(U,V,\eta,\beta,\mathcal W)$, a particular point in $V^*$. 
We think of this point as being canonically associated to $\Xi$. In the case where $\Xi$ is split, the associated canonical point in $V^*$ also gives rise to a point in $U^*$. 

The construction of the canonical point will in general be a recursive one and involve constructing out of $\Xi=:\Xi_1$ a sequence of $\Xi_i$. We use the following auxiliary definition.

\begin{defn}\label{d:minimalheight}
Suppose $\Xi=(U,V,\eta,\beta,\mathcal W)$ is a complete  Newton datum. We associate to $\Xi$ the set
\begin{equation}\label{e:Wbar}
\overline{\mathcal W}:=\{w\in\mathcal W\mid \beta(w)\ne 0\}
\end{equation}
and call it the {\it reduced set} of $\Xi$. If $\dim(U)>0$, then $\overline{\mathcal W}\ne \emptyset$, by the completeness assumption. We refer to the convex hull $\overline\AugDelta=\ConvexHull(\overline{\mathcal W})$ as the {\it reduced polytope} of $\Xi$. If $\overline{\mathcal W}=\emptyset$ we set $\overline\AugDelta=\emptyset$.
We associate real numbers $\tau, \otau$  to $\Xi$ using the full polytope $\AugDelta$ and the reduced polytope $\overline\AugDelta$, by setting
\begin{align*}
\tau:&=\min(\{c\in\R\mid \eta(c)\in\AugDelta\})\\
\otau:&=\min(\{c\in\R\mid \eta(c)\in\overline\AugDelta\}).
\end{align*}
Here we let $\min(\emptyset):=\infty$ for the case that $\overline\AugDelta=\emptyset$. We refer to $\tau$ and $\otau$ as the {\it minimal height} above $0$ of $\AugDelta$ and $\overline\AugDelta$, respectively. Note that since $0$ is assumed to lie in the interior of Newton polytope $\NDelta$ of $\Xi$, we have that $\NDelta=\beta(\AugDelta)=\beta(\overline{\AugDelta})$.

Suppose $\overline\AugDelta\ne 0$. We note that by the construction of the reduced polytope 
we have that the lowest point $\eta(\bar\tau)$ above $0$ is never a vertex of $\overline\AugDelta$. We will denote the minimal face of $\overline\AugDelta$ containing $\eta(\bar\tau)$ by $F$. We also let $E$ be the linear span of the translate $F-\eta(\bar\tau)$. Thus $E$ is a positive-dimensional subspace of $V$ and is the minimal subspace for which $F\subset \eta(\bar\tau)+E$.
\end{defn}

\begin{remark} In the setting of $\Xi_W$, see Example~\ref{ex:XiW}, since $\AugDelta=\AugNewton(W)$, the minimal height above $0$ of $\AugDelta$ recovers the minimal height $\tau$ associated to $W$ in the earlier Definition~\ref{d:minimalheightAugNewton}. Also in this setting, the reduced polytope $\overline{\AugDelta}$ agrees with the polytope $\AugNewton(W-W_{\operatorname{const}})$ where $W_{\operatorname{const}}\in \KK$ is the constant term of $W$. Note that both $W$ and $W-W_{\operatorname{const}}$ have the same Newton polytope by the completeness assumption. They also have the same set of critical points.
\end{remark}

\vskip .2cm

\begin{recursionstep} \label{r:recursion} \rm Assume $\Xi=(U,V,\eta,\beta,\mathcal W)$ is a complete  Newton datum  such that $\dim(U)>0$. Consider the associated reduced polytope $\overline \AugDelta$ with minimal height $\otau$. We construct a new datum $\Xi'=(U',V',\eta',\beta',\mathcal W')$ with reduced polytope $\overline{\AugDelta'}$ having minimal height $\otau'$, along with  connecting maps from $\Xi$ to $\Xi'$, as follows.
As in Definition~\ref{d:minimalheight}, let $F$ be the minimal-dimensional face of $\overline \AugDelta$ containing $\eta(\otau)$ and
let $E$ be the span  of the translation of the face $F$ through $0$.
Now $E$ is a linear subspace of $V$ and $\dim(E)>0$. We define
\begin{itemize}
\item $V':=V/E$ and $U':=U/\beta(E)$,
\item associated  projections 
\[
 \sigma:V\to V'=V/E ,\qquad \pi:U\to U'=U/\beta(E),
\]
\item
maps $\eta':\R\to V'$ and $\beta':V'\to U'$ induced by the maps $\eta$ and $\beta$,
\item a set  $\mathcal W':=\sigma(\overline{\mathcal W})$ in $V'$ for $\overline{\mathcal W}$ as in \eqref{e:Wbar}. 
\end{itemize}
Let us denote the tuple $(U',V',\eta',\beta',\mathcal W')$ constructed above by $\Xi'$. We also set ${\AugDelta'}:=\ConvexHull(\mathcal W')$. We also set $\overline{\mathcal W'}:=\{w\in \mathcal W'\mid\beta'(w)\ne 0\}$. If $\overline{\mathcal W'}\ne \emptyset$ we define $\overline{\AugDelta'}=\ConvexHull(\overline{\mathcal W'})$. Otherwise we set $\overline{\AugDelta'}:=\emptyset$. We now prove that $\Xi'$ is again a complete  Newton datum.
\end{recursionstep}

\begin{lemma}\label{l:goodLaurent} Let $\Xi=(U,V,\eta,\beta,\mathcal W)$  be a complete  Newton datum with $\dim(U)>0$ and all notations as above. Let $\Xi'=(U',V',\eta',\beta',\mathcal W')$ be the tuple constructed from $\Xi$ in the recursion step. Then the following properties hold. 
\begin{enumerate}
\item $\Xi'$ is a complete  Newton datum, and we have $\dim(U')<\dim(U)$.
\item The minimal height $\otau'$ of  the reduced polytope $\overline{\AugDelta'}$ of $\Xi'$ is related to the minimal height $\otau$ of $\overline{\AugDelta}$ from $\Xi$ by the inequality $\otau<\otau'$.
\item We have the inclusion of convex sets, $\sigma(\overline\AugDelta)\subset \eta'( \otau+\R_{\ge 0})+\overline{\AugDelta'}$.
\end{enumerate}
\end{lemma}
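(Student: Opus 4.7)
The proof of the lemma splits into three parts corresponding to (1), (2), (3), with a common geometric foundation.

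For claim~(1), the key is to verify that the sequence $0\to\R\overset{\eta'}{\to}V'\overset{\beta'}{\to}U'\to 0$ is well-defined and exact, which reduces to the injectivity of $\eta':\R\to V/E$, equivalently $\eta(\R)\cap E=\{0\}$. The argument: if $\eta(c)\in E$ for some $c\ne 0$, then because $\eta(\otau)$ lies in the relative interior of $F$, a small push of $\eta(\otau)$ in the direction $\pm\eta(c)\in E$ stays inside $F\subset\overline\AugDelta$; choosing the sign correctly produces $\eta(\otau-\epsilon)\in\overline\AugDelta$ for some $\epsilon>0$, contradicting minimality of $\otau$. The same reasoning shows $\dim(E)>0$ (since $\eta(\otau)$ is not a vertex of $\overline\AugDelta$) and $\beta(E)\ne 0$, giving $\dim(U')<\dim(U)$. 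Completeness of $\Xi'$ is inherited from $\Xi$ because $\NDelta'=\pi(\NDelta)$ has $0$ in its relative interior whenever $\NDelta$ does, as $\pi$ is a linear surjection.

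The geometric heart of both~(2) and~(3) is the identification $\overline\AugDelta\cap(\eta(\otau)+E)=F$. To prove it, I take $x\in\overline\AugDelta\cap(\eta(\otau)+E)$; the segment from $\eta(\otau)$ to $x$ lies in this intersection, and since $\eta(\otau)$ is in the relative interior of $F$ it can be extended slightly past $\eta(\otau)$ while remaining in $F$. Minimality of $F$ as the face of $\overline\AugDelta$ containing $\eta(\otau)$ then forces $x\in F$. I would also construct a linear functional $\ell:V\to\R$ that supports $\overline\AugDelta$ along $F$ (so $\ell|_{\overline\AugDelta}\ge a:=\ell(\eta(\otau))$, $\ell|_F=a$, hence $\ell|_E=0$) with the crucial additional property $\ell(\eta(1))>0$. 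Its existence follows from a tangent-cone argument: if every supporting $\ell$ at $\eta(\otau)$ satisfied $\ell(\eta(1))\le 0$, then $-\eta(1)$ would belong to the tangent cone to $\overline\AugDelta$ at $\eta(\otau)$, producing $\eta(\otau-s)\in\overline\AugDelta$ for small $s>0$ and contradicting the minimality of $\otau$.

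For claim~(2), I assume $\eta'(c)\in\overline{\AugDelta'}$ and write $\eta'(c)=\sum_ir_i\sigma(w_i)$ with $r_i>0$, $\sum r_i=1$, and $w_i\in\overline{\mathcal W}$ satisfying $\beta(w_i)\notin\beta(E)$. Setting $v:=\sum_ir_iw_i\in\overline\AugDelta$ gives $v-\eta(c)\in E$. Applying $\ell$ (using $\ell|_E=0$ and $\ell(\eta(1))>0$) yields $c\ge\otau$. If equality held, then $v\in\overline\AugDelta\cap(\eta(\otau)+E)=F$; the face property of $F$ applied to the strict convex combination $v=\sum r_iw_i$ forces every $w_i\in F\subset\eta(\otau)+E$, so $\beta(w_i)\in\beta(E)$, contradicting the defining condition on the $w_i$'s. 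Hence $c>\otau$ strictly, giving $\otau'>\otau$ (with the convention $\otau'=\infty$ in the edge case $\overline{\AugDelta'}=\emptyset$).

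For claim~(3), I analyse vertices: every vertex of $\sigma(\overline\AugDelta)=\ConvexHull(\sigma(\overline{\mathcal W}))$ is $\sigma(w)$ for some $w\in\overline{\mathcal W}$, falling into two cases. If $\beta(w)\notin\beta(E)$, then $\sigma(w)\in\overline{\mathcal W'}\subset\overline{\AugDelta'}$. If $\beta(w)\in\beta(E)$, then $\sigma(w)=\eta'(c_w)$ for some $c_w$, and specialising the analysis of~(2) to $v=w$ yields $c_w\ge\otau$, placing $\sigma(w)\in\eta'(\otau+\R_{\ge 0})$. Every vertex thus lies in $\eta'(\otau+\R_{\ge 0})\cup\overline{\AugDelta'}$, and the inclusion of convex sets asserted in~(3) follows by convexity. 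The principal obstacle throughout is the construction and careful use of the supporting functional $\ell$ with strict positivity $\ell(\eta(1))>0$: this refined supporting hyperplane is what upgrades the weak bound $c\ge\otau$ to the strict $c>\otau$ in~(2), and it is the technical backbone of the entire lemma.
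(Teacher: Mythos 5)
Your proof is correct in substance, and for parts (1) and (2) it takes a genuinely different (and in one respect more complete) route than the paper. For (1) the paper also reduces exactness to $\ker(\beta)\cap E=\{0\}$, but argues via the interval $I=\overline\AugDelta\cap\ker(\beta)$ and a minimal face containing it; your shortcut — $\eta(\otau)$ lies in the relative interior of $F$, so $\eta(1)\in E$ would allow moving down to $\eta(\otau-\epsilon)\in F\subset\overline\AugDelta$, contradicting minimality of $\otau$ — is sound and a little more direct. For (2) the paper instead projects a supporting hyperplane of $F$ to exhibit $\eta'(\otau)$ as a vertex of $\AugDelta'$, which is then discarded when forming $\overline{\mathcal W'}$; your mechanism is the supporting functional $\ell$ with $\ell|_E=0$ and $\ell(\eta(1))>0$ (obtained by the tangent-cone duality argument, which is valid here since the tangent cone of the polytope at $\eta(\otau)$ contains the lineality space $E$ and all its elements are feasible directions), combined with the identification $\overline\AugDelta\cap(\eta(\otau)+E)=F$. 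This buys you the quantitative lower bound $c\ge\otau$ for \emph{every} point $\eta'(c)$ of $\AugDelta'$ on the line $\eta'(\R)$, not merely the exclusion of the single point $\eta'(\otau)$ from $\overline{\AugDelta'}$ — a step the paper's proof of (2) leaves implicit — and it is exactly what powers both the strict inequality $\otau'>\otau$ and the vertex dichotomy you use in (3), where your argument coincides with the paper's.

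One caveat, which you share with the paper's own proof of (3): from "every vertex of $\sigma(\overline\AugDelta)$ lies in $\eta'(\otau+\R_{\ge0})\cup\overline{\AugDelta'}$", convexity only yields containment in the convex hull of that union, not in the Minkowski sum $\eta'(\otau+\R_{\ge0})+\overline{\AugDelta'}$, which in general contains neither summand. The literal Minkowski-sum inclusion can in fact fail: with $V=\R\oplus\R^2$, $\eta(c)=(c,0,0)$, $\beta$ the projection to $\R^2$, and $\mathcal W=\{(0,1,0),(0,-1,0),(0,0,1),(10,0,-1)\}$, one gets $\otau=0$, $F=\ConvexHull\{(0,\pm1,0)\}$, $E=\R\,(0,1,0)$, $\overline{\AugDelta'}$ the segment from $(0,1)$ to $(10,-1)$ in $V'\cong\R^2$, and $\sigma(0,1,0)=\eta'(0)$ does not lie in the ray plus that segment. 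Nothing downstream is harmed, because what is actually used later (e.g.\ in Remark~\ref{r:inclusionconvexj} and Claim~1 of Proposition~\ref{p:specialpointtropcrit}) is precisely the dichotomy you prove, evaluated against the functional $(1,a)$; but if you want the displayed inclusion verbatim you should either restate it as containment in $\ConvexHull\bigl(\eta'(\otau+\R_{\ge0})\cup\overline{\AugDelta'}\bigr)$ or record instead the inequality $\ell'\ge\otau$ on $\sigma(\overline\AugDelta)$ for the induced functional.
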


\begin{proof} 
We first prove (1). We have commutative diagram where the top row is known to be an exact sequence,
\[
\begin{array} {ccccccccc}0 &\To & \R &\overset{\eta} \To & V & \overset{\beta}\To & U &\To &0\\
 & &id\downarrow\quad &  & \sigma\downarrow\quad &  & \pi\downarrow \quad& &\\
0 &\To & \R &\overset{\eta'} \To & V' &\overset{\beta'} \To & U' &\To &0.
\end{array}
\]
We need to show that the bottom row is also exact. Clearly $\beta$ and $\pi$ are both surjective, hence the commutativity of the second square implies that $\beta'$ is also surjective. The commutativity of the diagram also implies that the image of $\eta'$ lies in the kernel of $\beta'$.

Now by the recursion step we have $V'=V/E$. To show that $\eta'$ is injective and $\Image(\eta')=\ker(\beta')$ we need to show precisely that $\ker(\beta)\cap E=\{0\}$. Or, if we translate by $\eta(\otau)$, then it suffices to prove that $\ker(\beta)\cap F=\{\eta(\otau)\}$, for the minimal-dimensional face $F$ of $\overline\AugDelta$ containing $\eta(\otau)$.

Let $I$ be the intersection of $\overline\AugDelta$ with $\ker(\beta)$.  Then either $I=\{\eta(\otau)\}$, in which case we are done, or $I$ is an interval in the line $\ker(\beta)=\Image(\eta)$, and $\eta(\otau)$ is an endpoint of the interval $I$. 
In the latter case suppose there exists a face $G$ of $\overline \AugDelta$ which contains the interval $I$.  We may suppose that $G$ is minimal with this property. Then $\eta(\otau)$ is on the boundary of the face $G$. Therefore there is a proper face of $G$ which contains $\eta(\otau)$, and this face does not contain the interval $I$ (by minimality of $G$). This face must contain $F$, since $F$ was the minimal face of $\overline \AugDelta$ that contained $\eta(\otau)$.  Thus $F$ also does not contain the interval $I$, intersecting it only in $\eta(\otau)$. It follows that $F\cap\ker(\beta)=\{\eta(\otau)\}$, which concludes the proof of exactness of the bottom row.   

Now let $\AugDelta'$ denote the convex hull of $\mathcal W'$. It remains to observe that the projection $\beta'(\AugDelta')$ of the polytope $\AugDelta'$ is full-dimensional with zero in the interior, which follows from the fact that $\beta'(\AugDelta')=\pi(\beta(\overline\AugDelta))$ and $\beta(\overline\AugDelta)=\beta(\AugDelta)$ is full-dimensional with zero in the interior.

We now prove (2). Note that  $\AugDelta'=\sigma(\overline\AugDelta)$. This is the full polytope of the complete  Newton datum $\Xi'$.   Consider the face $F$ of $\overline\AugDelta$ and its image under $\sigma$. We show that this image is a vertex of $\AugDelta'$. 

Namely suppose $H$ is an affine hyperplane in $V$ intersecting $\overline\AugDelta$ in $F$. Then
\[
H\cap\overline\AugDelta\ =\ F\subset\  \eta(\otau)+E.
\]
Therefore the projection $H'=\sigma(H)$ is an affine hyperplane in $V'$ which intersects $\overline{ \AugDelta'}$ precisely in a point. Indeed, this point is the projection the face $F$ of $\overline{\AugDelta}$, and it also equals $\sigma(\eta(\otau))=\eta'(\otau)$. The point $\eta'(\otau)$ is therefore a vertex of $\AugDelta'$, as the intersection of $\AugDelta'$ with the hyperplane $H'$. 

Since $\AugDelta'=\ConvexHull(\mathcal W')$, the vertex $\eta'(\otau)$ is necessarily a point of $\mathcal W'$. As it is also an element in the kernel of $\beta'$, it is removed in the construction of $\overline{\mathcal W'}$. Thus the reduced polytope $\overline{\AugDelta'}=\ConvexHull(\overline{\mathcal W'})$ does not contain $\eta'(\otau)$, and therefore its lowest point above $0$, namely $\eta(\otau')$, necessarily satisfies $\otau'>\otau$. This proves part (2) of the lemma.

We now prove (3). First assume that $\overline{\AugDelta'}\ne\emptyset$. For the left-hand side of the inclusion, $\sigma(\overline\AugDelta)=\AugDelta'$. This equals the convex hull of $\mathcal W'$. The set $\mathcal W'$ is the union of the set $\overline{\mathcal W'}$ and a set of points which can be written in the form $\eta(c)$, where $c\ge \otau$, by part (2). Thus (3) follows.   
\end{proof}

\begin{remark}\label{r:transversalE}
Note that in the proof of Lemma~\ref{l:goodLaurent} we showed that the subspace $E$ from Definition~\ref{d:minimalheight} is transversal to the line $\tau(\R)$. Indeed this property is equivalent to the injectivity of the map $\eta':\R\to V/E$.
\end{remark}

\begin{defn}[Canonical point of a complete  Newton datum]\label{d:specialpoint}
Suppose that $\Xi=(U,V,\eta,\beta,\mathcal W)$ is a complete  Newton datum. We define a canonical point $\widetilde\crit(\Xi)\in V^*$ associated to $\Xi$ as follows. 

Define $\Xi_1=(U_1,V_1,\eta_1,\beta_1,\mathcal W_1):=\Xi$ and construct a finite sequence of complete  Newton data $\Xi_k=(U_k,V_k,\eta_k,\beta_k,\mathcal W_k)$ by applying the recursion step whenever $\dim (U_k)>0$, setting  $\Xi_{k+1}:=(\Xi_k)'$. In particular we have $V_{k+1}=V_k/E_k$ and we call the projection map $\sigma_k:V_k\to V_{k+1}$. Let $m$ be the first index for which $\dim (U_m)=0$. Thus $V_m$ is $1$-dimensional and $\Xi_m$ is the final complete  Newton datum in the sequence. We identify $V_m^*$ with $\R$ via the isomorphism 
\[
\eta_m^*:V_m^*\to \R,
\]
and we have a sequence of injections
\[
\R=V_m^*\overset{\sigma_{m-1}^*}\To V^*_{m-1} \overset{\sigma_{m-2}^*}\To V^*_{m-2}\To \cdots \To V_2^*\overset{\sigma_{1}^*}\To V^*_{1}=V^*.
\] 
We set $\widetilde\crit(\Xi)\in V^*$ be the image of $1$ under the above composition of maps. 
\end{defn}

\begin{defn}[Canonical point of a split complete  Newton datum]\label{d:splitspecialpoint}
Suppose that $\Xi=(U,V,\eta,\beta,\mathcal W)$ is a split complete  Newton datum with  splitting $\tilde\beta:U\to V$. We define a point in $U^*$ by setting
\[
\crit(\Xi)=\crit(\Xi,\tilde\beta):=\tilde\beta^*(\widetilde\crit(\Xi)).
\]
\end{defn} 

\begin{remark} We note that $\widetilde\crit(\Xi)\in V^*$  is nonzero by its construction. The canonical point $\crit(\Xi,\tilde\beta)\in U^*$ of a split complete  Newton datum on the other hand may be equal to $0$.

If $\Xi=\Xi_W$ is the split complete  Newton datum from Example~\ref{ex:XiW} which is associated to a complete, positive $W:T\to\KK$, then $\crit(\Xi_W)\in N_\R$ since $U=M_\R$ and  $N_\R=M_\R^*$. Moreover $\crit(\Xi_W)\in N_\R$ can be interpreted as a tropical point of $T$ via the identification $N_\R\hat = \Trop(T)$ from Definition~\ref{d:TropN}. 
\end{remark}

\vskip .2cm 

The key property of the canonical point associated to $\Xi_W$ is that it satisfies the tropical critical conditions for $W$, as we will prove next. 
 Note that the property of satisfying the tropical critical conditions for $W$ in fact only depends on the complete  Newton datum $\Xi_W$.

\begin{prop}\label{p:specialpointtropcrit} Let $W$ be a positive, complete  Laurent polynomial, and let $\Xi_W$ be the split complete  Newton datum associated to $W$  in Example~\ref{ex:XiW}. Then the canonical point $\crit(\Xi_W)\in N_\R$ from Definition~\ref{d:splitspecialpoint} satisfies the tropical critical  conditions for~$W$, see Definition~\ref{d:tropcritcond}.  
\end{prop}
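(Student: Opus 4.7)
The plan is to prove the proposition by induction on $r=\dim M_\R$, exploiting the recursive structure that defines $\widetilde\crit$. To make the induction work cleanly, I will first generalize both the tropical critical conditions and the claim to an arbitrary complete Newton datum $\Xi=(U,V,\eta,\beta,\mathcal W)$ equipped with a functional $\widetilde d\in V^*$ normalized by $\widetilde d(\eta(1))=1$: setting $\tau_{\widetilde d}:=\min_{w\in\mathcal W}\langle w,\widetilde d\rangle$ and $\delta_w:=\langle w,\widetilde d\rangle-\tau_{\widetilde d}$, the generalized condition at level $\varepsilon\ge 0$ asks that
\[
\ConvexHull^\circ\bigl(\{\beta(w): w\in\mathcal W,\ \delta_w=\varepsilon\}\bigr)\cap\Span\bigl(\{\beta(w): w\in\mathcal W,\ \delta_w<\varepsilon\}\bigr)\ne\emptyset.
\]
I will show that $\widetilde d=\widetilde\crit(\Xi)$ satisfies this condition for any such $\Xi$; specializing to $\Xi_W$, where $\beta=\pr$, then recovers the tropical critical conditions for $W$.

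The base case $\dim U=0$ is immediate since $\beta(\mathcal W)=\{0\}$. For the inductive step, one recursion step produces $\Xi'$ with $\dim U'<\dim U$, and by Definition~\ref{d:specialpoint} we have $\widetilde d=\sigma^*(\widetilde d')$, where $\widetilde d':=\widetilde\crit(\Xi')$ satisfies the generalized TC for $\Xi'$ by the inductive hypothesis. The key computation is that $\langle w,\widetilde d\rangle=\langle\sigma(w),\widetilde d'\rangle$ for $w\in\overline{\mathcal W}$, while $\langle w,\widetilde d\rangle=\eta\inv(w)$ for $w\in\mathcal W\setminus\overline{\mathcal W}\subset\eta(\R)$; in particular every vertex of $F$ lying in $\overline{\mathcal W}$ projects under $\sigma$ to $\eta'(\otau)$ and so satisfies $\langle w,\widetilde d\rangle=\otau$. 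For the case $\varepsilon=0$ I would invoke a straightforward generalization of Lemma~\ref{l:tropcritmax} (whose proof carries over verbatim for arbitrary complete Newton data) to reduce the condition to the statement that $\lowestface_{\widetilde d}(\AugDelta)$ is the minimal face of $\AugDelta$ containing $\eta(\tau)$; this follows directly from the computations above, after separating into cases according to whether the minimum of $\widetilde d$ on $\mathcal W$ is attained on $F\cap\overline{\mathcal W}$ or on a vertex of $\mathcal W\setminus\overline{\mathcal W}$ with lower valuation.

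The main content is the case $\varepsilon>0$. Setting $\Delta:=\tau_{\widetilde d'}-\tau_{\widetilde d}\ge 0$, one has the identity $\delta_w(\widetilde d)=\delta_{\sigma(w)}(\widetilde d')+\Delta$ on $\overline{\mathcal W}$, so the TC at level $\varepsilon>\Delta$ for $\Xi$ reduces to the TC at level $\varepsilon-\Delta$ for $\Xi'$. Concretely, given a witness $\bar v\in U'$ for the TC at $\Xi'$ expressed both as a strict convex combination and as a real linear combination in the appropriate quotient sets, I lift each expression to $U$ by choosing preimages in $\overline{\mathcal W}$, obtaining $v,v'\in U$ with $\pi(v)=\pi(v')=\bar v$. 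Then $v-v'\in\ker(\pi)=\beta(E)=\Span(\beta(F\cap\overline{\mathcal W}))$, and since every $w\in F\cap\overline{\mathcal W}$ satisfies $\delta_w(\widetilde d)=\otau-\tau_{\widetilde d}<\varepsilon$, this correction lies in $\Span(\{\beta(w):\delta_w<\varepsilon\})$, yielding an element of the required intersection. I expect the main obstacle to be careful bookkeeping of the points in $\mathcal W\setminus\overline{\mathcal W}$: they can lower $\tau_{\widetilde d}$ below $\tau_{\widetilde d'}$, they contribute zero-vectors to the $\beta$-image sets in the TC condition, and the witness may need to be rescaled by some $\alpha\in(0,1)$ so as to survive in the relative interior required by $\ConvexHull^\circ$ once these zero contributions are included.
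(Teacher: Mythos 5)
Your overall strategy (induction on $\dim U$, transporting the tropical critical conditions through a single recursion step) is genuinely different in organization from the paper, which never inducts on dimension but instead works with the whole recursion at once, using the flag $\widetilde E_1\subset\cdots\subset\widetilde E_{m-1}$ and the single functional $h_a=(1,\crit)$ (Claims 1--3 of the paper's proof). Your reduction of the level-$\varepsilon$ condition for $\Xi$ to the level-$(\varepsilon-\Delta)$ condition for $\Xi'$ when $\varepsilon>\Delta$, and your bookkeeping of the points of $\mathcal W\setminus\overline{\mathcal W}$ via rescaling, are essentially sound, \emph{provided} you know $\otau\le\tau_{\widetilde d'}$ (so that the vertices of $F$ really sit at level $\Delta<\varepsilon$ and $\beta(E)$ is absorbed into the lower span). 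That inequality is not automatic from your normalization; it can be salvaged from the inductive hypothesis at $\varepsilon'=0$ (which forces $\eta'(\tau_{\widetilde d'})\in\AugDelta'$) together with a convexity argument using $\eta(\otau)\in\operatorname{relint}F$: any $x\in\overline\AugDelta\cap(\eta(c)+E)$ with $c<\otau$ would, after averaging with a point $\eta(\otau)-\epsilon e\in F$, produce a point of $\overline\AugDelta$ on the axis $\eta(\R)$ strictly below $\otau$. So that sub-step is a fixable omission rather than a fatal one.

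The genuine gap is at the level $\varepsilon=\Delta$ (which is also your $\varepsilon=0$ case when $\Xi$ has no points of $\mathcal W\setminus\overline{\mathcal W}$ below height $\otau$). There the span of the lower levels is $\{0\}$, so the condition you must verify is $0\in\ConvexHull^\circ(\{\beta(w)\mid \langle w,\widetilde d\rangle=\otau\})$, and the correction by $\beta(E)$ that powers your lifting argument is no longer available. You know $0\in\ConvexHull^\circ(\beta(F\cap\overline{\mathcal W}))$ because $\eta(\otau)\in\operatorname{relint}F$, but the level set may a priori contain elements $w\in\overline{\mathcal W}\setminus F$ with $\langle w,\widetilde d\rangle=\otau$; equivalently, $\sigma(w)$ would be a minimizer of $\widetilde d'$ on $\mathcal W'$ lying outside $\ker\beta'$. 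The inductive hypothesis ``TC for $(\Xi',\widetilde d')$'' does not exclude this: the $\varepsilon'=0$ condition is perfectly consistent with such minimizers (e.g.\ $\beta'$-images $\{0,1,-1\}$), and once they are present, $0$ need not remain in the \emph{relative interior} of the enlarged hull (think of $\{\pm1\}$ in a line versus the triangle obtained by adjoining a point off that line). So the assertion that the lowest-face identification ``follows directly from the computations above'' is precisely where the argument breaks; in the paper this is the harder inclusion $\widetilde F_j\subseteq F_j$ of Claim~1, proved using information one level deeper in the recursion, namely the strict increase $\otau_{j+1}>\otau_j$ of Lemma~\ref{l:goodLaurent}(2) together with the half-space bound at level $j+1$. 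To repair your induction you must strengthen the inductive statement: carry along, besides TC, the assertion that $\min_{\overline{\mathcal W}}\langle\,\cdot\,,\widetilde\crit(\Xi)\rangle=\otau$ and that the minimum is attained exactly on $F$ (i.e.\ $\lowestface_{\widetilde\crit(\Xi)}(\overline\AugDelta)=F$), and prove this alongside TC in the inductive step via Lemma~\ref{l:goodLaurent}(2). With that strengthening the induction closes, but at that point your proof has become a dimension-indexed reorganization of the paper's Claims~1--3 rather than a shortcut around them.
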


We first give a more direct description of $\crit(\Xi)$ which will be useful in the proof of the proposition. Namely we describe the point $\widetilde\crit(\Xi)$ and $\crit(\Xi)$ 
concretely as follows. 

\begin{lemma}\label{l:1a} Let $\Xi=(U,V,\eta,\beta,\mathcal W)$ be a complete  Newton datum with full polytope~$\AugDelta$ and let $\widetilde{\crit}(\Xi)$  be its associated canonical point in $V^*$. Recall also the maps $\sigma_i$ from Definition~\ref{d:specialpoint}. We define a subspace $\widetilde E$ in $V$ by setting $\widetilde E:=\ker(\sigma_{m-1}\circ\dotsc\circ\sigma_1)$. 
\begin{enumerate}\item
The subspace $\widetilde E$ is the kernel of $\widetilde\crit(\Xi)$, and $\widetilde\crit(\Xi)$   maps to~$1$ under the map $\eta^*:V^*\to \R$. Moreover these two properties uniquely characterise the canonical point  $ \widetilde\crit(\Xi)$.
\item
 We have that $\AugDelta\subset \eta( \tau+\R_{\ge 0})+\widetilde E$, where $\tau$ is the minimal height above $0$ of $\AugDelta$ from Definition~\ref{d:minimalheight}.
 \end{enumerate}
\end{lemma}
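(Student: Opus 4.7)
I would unpack Definition~\ref{d:specialpoint} directly. Since $V_m$ is one-dimensional and $\eta_m$ is an isomorphism, the element $\eta_m^{*-1}(1)\in V_m^*$ equals $\eta_m^{-1}$ viewed as a linear functional $V_m\to\R$, and the construction then yields the explicit formula
\[
\widetilde\crit(\Xi) \;=\; \eta_m^{-1}\circ\sigma_{m-1}\circ\cdots\circ\sigma_1 \;:\;V\to\R.
\]
From this $\ker\widetilde\crit(\Xi)=\widetilde E$ is immediate, since $\eta_m^{-1}$ has trivial kernel, and the identity $\sigma_k\circ\eta_k=\eta_{k+1}$ built into the Recursion Step iterates to $\widetilde\crit(\Xi)\circ\eta=\eta_m^{-1}\circ\eta_m=\mathrm{id}_\R$, i.e.\ $\eta^*(\widetilde\crit(\Xi))=1$. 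Uniqueness follows because functionals on $V$ vanishing on $\widetilde E$ form a one-dimensional subspace (as $V/\widetilde E\cong V_m\cong\R$), and evaluation at $\eta(1)$ is nonzero on this line (since $\eta(1)\notin\widetilde E$ by injectivity of $\eta_m$); the normalisation $\eta^*(\phi)=1$ then pins $\phi$ down.

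\textbf{Part (2).} I would argue by induction on $m$, the length of the recursion. The base case $m=1$ is immediate: then $\dim U=0$ forces $V\cong\R$ via $\eta$, $\widetilde E=\{0\}$, and $\AugDelta$ is an interval in $\eta(\R)$ with left endpoint $\eta(\tau)$ by definition of $\tau$. For the inductive step, since $\eta(\tau+\R_{\geq 0})+\widetilde E$ is convex and $\AugDelta=\ConvexHull(\mathcal W)$, it suffices to show $\mathcal W\subset\eta(\tau+\R_{\geq 0})+\widetilde E$. A point $w\in\mathcal W\setminus\overline{\mathcal W}$ has $\beta(w)=0$, hence $w\in\eta(\R)\cap\AugDelta\subset\eta(\tau+\R_{\geq 0})$ by definition of $\tau$. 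A point $w\in\overline{\mathcal W}$ projects to $\sigma_1(w)\in\mathcal W_2\subset\AugDelta_2$, so by the inductive hypothesis applied to $\Xi_2$ one has $\sigma_1(w)=\eta_2(c_2)+e_2$ with $c_2\geq\tau_2$ and $e_2\in\widetilde E_2$; lifting back to $V$ via $\widetilde E=\sigma_1^{-1}(\widetilde E_2)$ (immediate from the definitions) yields $w\in\eta(c_2)+\widetilde E$.

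\textbf{The main obstacle} is the verification $\tau_2\geq\tau$, which closes the induction. I would establish the stronger identity $\tau_2=\bar\tau$, from which $\tau_2=\bar\tau\geq\tau$ follows because $\overline{\AugDelta}\subset\AugDelta$. The inequality $\tau_2\leq\bar\tau$ is clear, as $\eta_2(\bar\tau)=\sigma_1(\eta(\bar\tau))\in\sigma_1(\overline\AugDelta)=\AugDelta_2$. For the reverse direction, suppose toward a contradiction that some $w\in\overline\AugDelta$ had the form $w=\eta(c)+e$ with $e\in E_1$ and $c<\bar\tau$ (the decomposition being unique by the transversality of $E_1$ and $\eta(\R)$, Remark~\ref{r:transversalE}). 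Since $F$ is by Definition~\ref{d:minimalheight} the \emph{minimal} face of $\overline\AugDelta$ containing $\eta(\bar\tau)$, the point $\eta(\bar\tau)$ lies in the relative interior of $F$, whose affine span is $\eta(\bar\tau)+E_1$. Thus for sufficiently small $\delta>0$, the perturbed point $\eta(\bar\tau)-\tfrac{\delta}{1-\delta}e$ still lies in $F\subset\overline\AugDelta$, and the convex combination with $w$ (weights $\delta$ and $1-\delta$) has the $E_1$-components cancel to give
\[
\delta\, w+(1-\delta)\Bigl(\eta(\bar\tau)-\tfrac{\delta}{1-\delta}e\Bigr)\;=\;\eta\bigl(\delta c+(1-\delta)\bar\tau\bigr)\;\in\;\overline\AugDelta,
\]
contradicting the definition of $\bar\tau$, since $\delta c+(1-\delta)\bar\tau<\bar\tau$. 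This $E_1$-cancellation argument is the one geometric step of substance; everything else is bookkeeping.
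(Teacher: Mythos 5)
Your proposal is correct. Part (1) is essentially the paper's own argument: unwinding Definition~\ref{d:specialpoint} to get $\widetilde\crit(\Xi)=\eta_m^{-1}\circ{\boldsymbol\sigma}_{[m]}$, reading off the kernel and the normalisation $\eta^*(\widetilde\crit(\Xi))=1$, and using that functionals vanishing on the hyperplane $\widetilde E$ form a line transversal to the condition $\eta^*=1$.

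For part (2) you take a genuinely different route. The paper starts from $\AugDelta\subset\eta(\tau+\R_{\ge 0})+\overline\AugDelta$ and pushes this forward through $\sigma_1,\dotsc,\sigma_{m-1}$, invoking Lemma~\ref{l:goodLaurent}(3) at each step and Lemma~\ref{l:goodLaurent}(2) to replace $\otau_k$ by $\tau$, until $\overline{\AugDelta_m}=\emptyset$ forces ${\boldsymbol\sigma}_{[m]}(\AugDelta)\subset\eta_m(\tau+\R_{\ge 0})$, and then pulls back along ${\boldsymbol\sigma}_{[m]}$. You instead induct on the recursion length $m$, reduce to the vertex set $\mathcal W$, and supply as the single new geometric input the identity asserting that the minimal height of the full polytope $\AugDelta'=\sigma_1(\overline\AugDelta)$ of $\Xi'$ equals $\otau$; your proof of this, using that $\eta(\otau)$ lies in the relative interior of the minimal face $F$ so that the $E$-component of a hypothetical lower point can be cancelled inside $F$, is correct (and uniqueness of the decomposition $w=\eta(c)+e$ is not even needed, only existence). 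This identity is in fact a cleaner packaging of what the paper establishes more tersely inside the proof of Lemma~\ref{l:goodLaurent}(2)--(3) (that $\eta'(\otau)$ is a vertex of $\AugDelta'$ and that the points of $\mathcal W'$ killed by $\beta'$ have height at least $\otau$), so your argument could even be used to flesh out that step. What each approach buys: the paper's forward iteration reuses Lemma~\ref{l:goodLaurent}, which it needs anyway, and avoids a separate induction; yours is more self-contained and makes the one substantive geometric fact explicit, though note you still tacitly rely on Lemma~\ref{l:goodLaurent}(1) to know $\Xi_2$ is again a complete Newton datum so that the inductive hypothesis applies.
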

 \begin{remark}\label{r:splitcrit}
If additionally the complete  Newton datum $\Xi$ is split, then we may identify  $V^*$ with $\R\oplus U^*$ such that $\eta^*$ is the projection onto the first coordinate. In this setting the above lemma implies that the canonical point $\crit(\Xi)$ from Definition~\ref{d:splitspecialpoint} is the unique element $a\in U^*$ such that $(1,a)$ vanishes on $\widetilde E$.
\end{remark}
\begin{proof}
Let ${\boldsymbol \sigma}_{[m]}:V\to V_m$ be the composition of  projections, ${\boldsymbol \sigma}_{[m]}=\sigma_{m-1}\circ\dotsc\circ\sigma_1$, so that $\widetilde E$ the kernel of ${\boldsymbol \sigma}_{[m]}$. Since $V_m\cong V/\widetilde E$ is $1$-dimensional, see Definition~\ref{d:specialpoint}, the linear subspace $\widetilde E$ is a hyperplane in $V$. Moreover, by construction, $\widetilde E$ is transversal to the line $\eta(\R)$, compare Remark~\ref{r:transversalE}.

By definition we have that $ \widetilde\crit(\Xi)\in V^*$ is equal to the image of $1$ under the composition ${\boldsymbol \sigma}_{[m]}^*\circ(\eta_m^*)\inv$, that is under the diagonal map in the commutative diagram, 
\begin{equation*}
\begin{tikzcd}
V_m^* = (V/\widetilde E)^*\arrow{r}{{\boldsymbol \sigma}_{[m]}^*} \ar[d,"\sim" labl,"\eta_m^*" ] &V^* \arrow{d}{\eta^*}\\
\R \arrow{r}{=}\arrow{ur}&\R.
\end{tikzcd}
\end{equation*}
Thus we see that $ \widetilde\crit(\Xi)\in V^*$ is the unique point in $V^*$ which vanishes on $\widetilde E$ and maps to $1$ under $\eta^*$. Thus (1) is proved.

Let us now prove (2). By definition of $\tau$ and $\overline{\AugDelta}$ we have that
\begin{equation}\label{e:inclusionconvex}
\AugDelta\subset \eta( \tau+\R_{\ge 0})+\overline{\AugDelta}
\end{equation}
Recall $\Xi_k=(U_k,V_k,\eta_k,\beta_k,\mathcal W_k)$, the $k$-th complete  Newton datum from Definition~\ref{d:specialpoint}, and $\overline{\AugDelta_{k}}$ its reduced polytope with minimal height above $0$ denoted $\otau_k$. In particular, $\overline{\AugDelta_1}=\overline{\AugDelta}$ and the  minimal height above zero $\otau_1$ satisfies $\otau_1\ge\tau$. We prove (2) by applying the maps $\sigma_1, \sigma_2,\dotsc, \sigma_{ m-1} $ successively to both sides of 
\eqref{e:inclusionconvex}. Note first that for any $k$ we have
\begin{equation}\label{e:inclusionconvexk}
\sigma_k(\overline\AugDelta_k)\subset \eta_{k+1}( \otau_k+\R_{\ge 0})+\overline{\AugDelta_{k+1}}\subset  \eta_{k+1}( \tau+\R_{\ge 0})+\overline{\AugDelta_{k+1}},
\end{equation}
where the first inclusion follows from Lemma~\ref{l:goodLaurent}(3), and the second from the inequality $\bar\tau_1\ge \tau$ together with Lemma~\ref{l:goodLaurent}(2).
This gives that
\begin{equation}\label{e:inclusionconvex2}
{\boldsymbol \sigma}_{[m]}(\AugDelta)=\sigma_{m-1}\circ\dotsc\circ\sigma_1(\AugDelta)\subset \eta_{m}( \tau+\R_{\ge 0})+\overline{\AugDelta_{m}}.
\end{equation}
Now we have $\overline{\AugDelta_{m}}=\emptyset$, compare Recursion Step~\ref{r:recursion}. Therefore we obtain that
\[
{\boldsymbol \sigma}_{[m]}(\AugDelta)\subset \eta_{m}( \tau+\R_{\ge 0}).
\]
Taking the inverse image ${\boldsymbol \sigma}_{[m]}\inv$, we see that $\AugDelta\subset \eta( \tau+\R_{\ge 0}) +\widetilde E$.
\end{proof}

\begin{remark}\label{r:inclusionconvexj} 
Using the notation of the above proof, we also claim that for every index $j$ the intermediate inclusion,
\begin{equation}\label{e:intermediateinclusions}
\overline{\AugDelta_j}\subset \eta_{j}( \otau_j+\R_{\ge 0})+{\boldsymbol \sigma}_{[j]}(\widetilde E),
\end{equation}
holds. In particular, for $j=1$ we have $\overline{\AugDelta}\subset \eta( \otau+\R_{\ge 0}) +\widetilde E$.  

Namely, if we take as our starting point instead of  \eqref{e:inclusionconvex} the analogous inclusion 
\[\sigma_j(\overline{\AugDelta_j})={\AugDelta_{j+1}}\subset \eta_{j+1}( \otau_{j}+\R_{\ge 0})+\overline{\AugDelta_{j+1}},
\] 
and apply $\sigma_{m-1}\circ\dotsc\circ\sigma_{j+1}$ to both sides (using again \eqref{e:inclusionconvexk}), we obtain
\[\sigma_{m-1}\circ\dotsc\circ\sigma_{j}(\overline{\AugDelta_j})\subset \eta_m( \otau_j+\R_{\ge 0}).
\]
The inverse image $ (\sigma_{m-1}\circ\dotsc\circ\sigma_{j})\inv(\eta_m( \otau_j+\R_{\ge 0}))$ of the right hand side is just 
\[
 \eta_j( \otau_j+\R_{\ge 0})+\ker(\sigma_{m-1}\circ\dotsc\circ\sigma_{j})= \eta_j( \otau_j+\R_{\ge 0})+{\boldsymbol \sigma}_{[j]}(\widetilde E).
 \]
  Thus  \eqref{e:intermediateinclusions} follows.
\end{remark}

\begin{lemma}\label{l:critmax}  Let $\Xi_W$ be the split complete  Newton datum associated to a positive, complete  Laurent polynomial $W$ as in Example~\ref{ex:XiW}, and let $\crit=\crit(\Xi_W)\in N_\R$ be its canonical point. Then $\Trop(W)(\crit)$ is the maximum of the piecewise linear function $\Trop(W)$.
\end{lemma}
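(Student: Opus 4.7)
The plan is to reduce the statement to the two ingredients already developed: the description of $\max\Trop(W)$ as the minimal height $\tau$ above $0$ of $\AugNewton(W)$ (Lemma~\ref{l:max}), and the intrinsic characterisation of $\crit = \crit(\Xi_W)$ via the hyperplane $\widetilde E$ (Lemma~\ref{l:1a} and Remark~\ref{r:splitcrit}).

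First I would recall that since $\Xi_W$ is split with $V = \R \oplus M_\R$, $U = M_\R$, $\eta(c)=(c,0)$, and splitting $\tilde\beta(v)=(0,v)$, the dual identification $V^* \cong \R \oplus N_\R$ makes $\eta^*$ the projection onto the first factor. Under this identification Remark~\ref{r:splitcrit} says that $\widetilde\crit(\Xi_W) = (1, \crit)$ is the unique element of $V^*$ which evaluates to $1$ on $\eta(1)=(1,0)$ and vanishes identically on the hyperplane $\widetilde E$ of Lemma~\ref{l:1a}.

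Next I would invoke Lemma~\ref{l:1a}(2): $\AugNewton(W) = \AugDelta \subset \eta(\tau + \R_{\ge 0}) + \widetilde E$. Thus every vertex $(c_i, v_i)$ of $\AugNewton(W)$ can be decomposed as $(c_i, v_i) = (c', 0) + e$ for some $c' \ge \tau$ and $e \in \widetilde E$. Pairing with $(1, \crit)$ and using that $(1,\crit)$ vanishes on $\widetilde E$ and evaluates to $c'$ on $(c',0)$, we obtain
\[
\langle (c_i, v_i), (1, \crit) \rangle = c' \ge \tau.
\]
Taking the minimum over the vertices gives $\Trop(W)(\crit) = \min_i (c_i + \langle v_i, \crit\rangle) \ge \tau$.

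Finally, the reverse inequality $\Trop(W)(\crit) \le \tau$ is already part of Lemma~\ref{l:max}, since $\tau$ was shown there to be an upper bound of $\Trop(W)$ on all of $N_\R$. Combining the two inequalities yields $\Trop(W)(\crit)=\tau=\max_{d\in N_\R}\Trop(W)(d)$, which is the desired statement. There is no serious obstacle: the whole argument rests on Lemma~\ref{l:1a}(2), which was established recursively through Recursion~Step~\ref{r:recursion} and Lemma~\ref{l:goodLaurent}, and on Lemma~\ref{l:max}.
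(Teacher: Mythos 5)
Your proof is correct and takes essentially the same route as the paper: both rest on pairing the inclusion $\AugNewton(W)\subset \eta(\tau+\R_{\ge 0})+\widetilde E$ from Lemma~\ref{l:1a}(2) with the functional $(1,\crit)$, which vanishes on $\widetilde E$. The only (cosmetic) difference is at the end, where you deduce $\Trop(W)(\crit)\ge\tau$ directly from the vertex formula and quote Lemma~\ref{l:max} for the reverse inequality, whereas the paper notes that equality holds at $(\tau,0)$, so $(\tau,0)\in\lowestface_{(1,\crit)}(\AugNewton(W))$, and then invokes Lemma~\ref{l:tauattainment}.
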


\begin{proof}
By Lemma~\ref{l:max} the maximal value of $\Trop(W)$ is $\tau$, the minimal height above $0$ of $\AugNewton(W)$. We denote $\AugNewton(W)$ also by $\AugDelta$. We now consider the hyperplane $\widetilde E\subset V=\R\oplus M_\R$ defined in Lemma~\ref{l:1a}, and recall that $(1,\crit)\in V^*$ is characterised by the property of vanishing on $\widetilde E$.

By Lemma~\ref{l:1a}(2) we have that $
\AugDelta\subset \eta(\tau+\R_{\ge 0})+\widetilde E$. Now pairing with $(1,\crit)$ gives
\[
\langle (1,\crit),w\rangle \ge \tau,\quad {\text{ for all $w\in\AugDelta$.}}
\]
This inequality becomes an equality if we choose $w=(\tau,0)$. 
In other words $(1,\crit)$ applied to elements of $\AugDelta$ attains its minimal value at $(\tau,0)$, and therefore we have that
\[
(\tau,0)\in\lowestface_{(1,\crit)}(\AugNewton(W)).
\]
By Lemma~\ref{l:tauattainment} this inclusion implies that $\Trop(W)(\crit)=\tau$.
\end{proof}

\

\begin{proof}[Proof of Proposition~\ref{p:specialpointtropcrit}] 

 We use the notations from earlier in this section. In particular we have the split complete  Newton datum $\Xi_W=\Xi_1=(V_1,U_1,\eta_1,\beta_1,\mathcal W_1)$, with $U_1=M_\R$, $V_1=\R\oplus M_\R$ and $\mathcal W_1=\{(c_i,v_i)\mid i=1,\dotsc n\}$. Let $\AugDelta=\AugDelta_1=\AugNewton(W)$, that is, the convex hull of $\mathcal W_1$. 
 
Recall  the sequence of complete  Newton data, $\Xi_j=(V_j,U_j,\eta_j,\beta_j,\mathcal W_j)$, where $j=1,\dotsc, m$, which we construct out of $\Xi_1$ using the Recursion Step~\ref{r:recursion}. We have associated to each $\Xi_j$ a reduced set, $\overline {\mathcal W}_j$,  with convex hull the reduced polytope, $\overline \AugDelta_j$, and the minimal height above $0$ of $\overline{\AugDelta}_j$ is denoted $\otau_j$, compare Definition~\ref{d:minimalheight}. Note that the real numbers $\otau_j$ are strictly increasing, by Lemma~\ref{l:goodLaurent}.(2). 
We also denote by $F_j$ the minimal face of $\overline{\AugDelta}_j$ containing the point $\eta_j(\otau_j)$. 
Recall that $V_{j+1}=V_{j}/E_{j}$, where $E_{j}$ is the span of the translated face $F_{j}-\eta_{j}(\otau_{j})$.

The proof of the proposition will rely on understanding the faces $F_j$ and for each $F_j$ writing its special point $\eta_j(\otau_j)$ as a convex combination of vertices. 
We now recall the key property of the canonical point $\crit(\Xi_W)$.

Recall the definition of $\widetilde E=\ker(\sigma_{m-1}\circ\dotsc\circ \sigma_1)$ from Lemma~\ref{l:1a}. We have that $\widetilde E$ is a hyperplane in $V_1=\R\oplus M_\R$, which is transversal to $\R\oplus\{0\}$ and whose projection to any $V_j$ contains the subspace $E_j$. 
Consider  also the projection $\boldsymbol \sigma_{[j]}=\sigma_{j-1}\circ\dotsc\circ\sigma_1: V_1\to V_j$, and denote by $\widetilde E_{j-1}$ the kernel of this projection map. Thus we have $\boldsymbol \sigma_{[j]}(\widetilde E_{j})=E_j$ and we have a flag of subspaces of $V_1$,  
\begin{equation}\label{e:flag}
\{0\}\subset \tilde E_1\subset\cdots \subset \tilde E_{m-1}\subset\tilde E_m=\tilde E,
\end{equation}
for which we may identify $V_j$ with $V_1/\tilde E_{j-1}$. Under this identification the subspace $E_j$ of $V_j$ is identified with $\widetilde E_j/\widetilde E_{j-1}$.

Let $a= \crit(\Xi_W)$.  Then by Lemma~\ref{l:1a}(1), $a$ is the unique element of $N_\R$ such that that $(1,a)\in V_1^*$ vanishes on the hyperplane $\widetilde E$,  i.e.
\begin{equation}\label{e:orthogonality}
\langle e, (1,a)\rangle= 0, \quad \text{ for all $e\in\widetilde E$.}
\end{equation}

The condition \eqref{e:orthogonality}  is equivalent to the statement that  pairing with $(1,a)$ gives rise to 
 a well-defined linear map on each $V_j=V_1/\widetilde E_{j-1}$, denoted
\[
h_{a,j}=\langle\ \underline{\ }\ , (1,a)\rangle: V_j\to \R.
\]
Moreover, we note that $h_{a,j}$ vanishes on the subspace $E_j$.

We now use the canonical point $a$ to give a description of the face $F_j$  as an intersection of $\overline\AugDelta_j$ with a hyperplane.   

\vskip .2cm 

\paragraph{\it Claim 1:} Let $a=\crit(\Xi_W)$ as above. For every $j$ the minimal height $\otau_j$ above $0$ of $\overline\AugDelta_j$  is equal to the minimal value on $\overline\AugDelta_j$ attained by the linear map $h_{a,j}$. In particular, $\overline\AugDelta_j$ lies in the half-space $\{h_{a,j}\ge \otau_j\}$,
\begin{equation}\label{e:halfspace}
\overline\AugDelta_j\subset \{h_{a,j}\ge \otau_j\}.
\end{equation}

Moreover, the intersection with the boundary, $ \{h_{a,j}=\otau_j\}$, of the  half-space recovers the minimal face $F_j$ containing $\eta_j(\otau_j)$,
\begin{equation}\label{e:hyperplaneF}
\overline\AugDelta_j\cap \{h_{a,j}=\otau_j\}=F_j.
\end{equation} 
\vskip .2cm

\paragraph{\it Proof of Claim 1:} Recall that $F_j\subset \eta_j(\otau_j) +E_j$, indeed by minimality of $E_j$ we have that
\begin{equation}\label{e:FjEj}
F_j= 
\overline\AugDelta_j\cap(\eta_j(\otau_j) +E_j).
\end{equation}
Since $h_{a,j}$ vanishes on $E_{j}$, we see that for any point $f\in F_j$,
\[
h_{a,j}(f)=h_{a,j}(\eta_j(\otau_j))=h_{a,j}(\boldsymbol{\sigma}_{[j]}(\otau_j,0))=\langle(1,a),(\otau_j,0)\rangle=\otau_j.
\]
Therefore $F_j$ lies in the hyperplane $\{h_{a,j}=\otau_j\}$ in $V_j$. It follows from Remark~\ref{r:inclusionconvexj}, by applying $h_{a,j}$ to both sides of the inclusion \eqref{e:intermediateinclusions}, that the polytope $\overline\AugDelta_j$ must lie in the upper half-space $\{h_{a,j}\ge \otau_j\}$. Thus we have shown \eqref{e:halfspace}, and the inclusion $``\supseteq"$ of \eqref{e:hyperplaneF}. 

Consider now the intersection 
\begin{equation*}\label{e:hyperplaneFagain}
\widetilde F_j:=\overline\AugDelta_j\cap \{h_{a,j}=\otau_j\},
\end{equation*} 
 which contains $F_j$, and because of \eqref{e:halfspace} must be a face of $\overline\AugDelta_j$. If it is not equal to $F_j$ then let $\boldsymbol{\sigma}_{[j]}(c_i,v_i)$ be a vertex of the face $\widetilde F_j$ not in $F_j$.  We have that 
 \[
 \boldsymbol{\sigma}_{[j]}(c_i,v_i)\notin \eta_j(\otau_j)+ E_j,
 \]
 because of \eqref{e:FjEj}. Moreover  
  \[
 \boldsymbol{\sigma}_{[j]}(c_i,v_i)\notin \eta_j(c)+ E_j,
 \]
 for any other $c\in \R$, since $h_{a,j}( \boldsymbol{\sigma}_{[j]}(c_i,v_i))=\otau_j\ne c$.
 Therefore  this vertex of $\widetilde F_j$ is not in  $\eta_j(\R)+ E_j $, which says precisely that its image in $V_{j+1}$ does not lie in the  kernel of $\beta_{j+1}$. So $\boldsymbol{\sigma}_{[j+1]}(c_i,v_i)$ is an element of the reduced set $\overline{\mathcal W}_{j+1}$, and in particular lies in the reduced polytope, $\overline\AugDelta_{j+1}$. Thus we have constructed an element of $\overline\AugDelta_{j+1}$ on which $h_{a,j+1}$ takes the value $\otau_{j}$. 
 
 However by \eqref{e:halfspace} applied to $V_{j+1}$ we know that  $\overline\AugDelta_{j+1}$ lies in the half-space $ \{h_{a,j}\ge \otau_{j+1}\}$. This contradicts the strict inequality $\otau_{j+1}>\otau_j$ which we have by Lemma~\ref{l:goodLaurent}.(2). Thus the claim is proved.

 \vskip .2cm

For $a=\crit(\Xi_W)$, let $\delta_i=\langle (c_i,v_i),(1,a)\rangle-\Trop(W)(a)$ for every $(c_i,v_i)\in \mathcal W_1$. Note that for $\tau$ as in Lemma~\ref{l:critmax} and $h_a:=h_{a,1}$, i.e. the linear map on $V_1$ given by  $(1,a)$, 
 we have 
 \[
 \delta_i=h_{a}(c_i,v_i)-\tau.
 \]
Let us set
\begin{equation} \label{e:grading}
\mathcal F_\varepsilon=\{(c_i,v_i)\in \mathcal W_1\mid  h_a(c_i,v_i)=\tau+\varepsilon\}=\{(c_i,v_i)\in \mathcal W_1\mid \delta_i=\varepsilon\}.
\end{equation}
We can now describe the face $F_j$ as a convex hull. 

 \vskip .2cm
\paragraph{\it Claim~2:} 
Let  $\varepsilon_j:=\otau_j-\tau$. Then 
\begin{equation}\label{e:FjasCH}
F_j=\ConvexHull(\boldsymbol{\sigma}_{[j]}(\mathcal F_{\varepsilon_j})).
\end{equation}

\vskip .2cm
 
 \paragraph{\it Proof of Claim 2:~} 
 We first prove the inclusion $``\subseteq"$. Since $F_j$ is a face of $\overline\AugDelta_j$, its vertices are certain elements $w_i=\boldsymbol\sigma_{[j]}(c_i,v_i)\in \overline {\mathcal W}_j$. By \eqref{e:hyperplaneF}  these lie on the hyperplane $\{h_{a,j}=\otau_j\}$, that is $h_{a,j}(w_i)=c_i+\langle v_i,a\rangle=\otau_j$.  This implies that $w_i\in\boldsymbol\sigma_{[j]}(\mathcal F_{\varepsilon_j})$. Thus $F_j$ lies inside the convex hull from \eqref{e:FjasCH}. 
 
For the other inclusion we need to show that  $ \boldsymbol\sigma_{[j]}(\mathcal F_{\varepsilon_j})$ lies in $F_j$. Note that
 \[
 \boldsymbol\sigma_{[j]}(\mathcal F_{\varepsilon_j})=\left( \boldsymbol\sigma_{[j]}(\mathcal F_{\varepsilon_j})\cap\overline {\mathcal W}_j\right) \cup \left(\boldsymbol\sigma_{[j]}(\mathcal F_{\varepsilon_j})\cap{\beta}_j\inv(0)\right),
 \] 
by the construction of $\overline{\mathcal W}_j$. In this union the left hand set consists of all those $w_i\in \overline{\mathcal W}_j$ which also lie in the hyperplane $\{h_{a,j}=
\otau_j\}$. Thus it is clearly contained in $ F_j$ by \eqref{e:hyperplaneF}. The right hand set lies in the intersection of ${\beta}_j\inv(0)=\eta_j(\R)$ with the hyperplane $\{h_{a,j}=
\otau_j\}$. But this intersection is a single point, namely $\eta_j(\otau_j)$. Thus it also lies in $F_j$, and Claim~2 follows.

\vskip .2cm

We now claim that $a$ satisfies the tropical critical conditions for $W$. 

\vskip.2cm
\paragraph{\it Claim~3:} Let $a=\crit(\Xi_W)$ and $\delta_i=\langle (c_i,v_i),(1,a)\rangle-\Trop(W)(a)$, as above. Then for any $\varepsilon\ge 0$
\begin{equation}\label{e:intersection}
\ConvexHull^\circ(\{v_i\mid 
\delta_i=\varepsilon\})\cap\Span(\{v_i\mid \delta_i<\varepsilon\rangle \})\ne\emptyset.
\end{equation}
\vskip .2cm
 
 \paragraph{\it Proof of Claim 3:~} 
 Let us first consider the case where $\varepsilon=\varepsilon_j=\otau_j-\tau$ for some fixed $j$. By Claim~2 we have
\begin{equation*}\label{e:FjasCH2}
F_j=\ConvexHull(\boldsymbol{\sigma}_{[j]}(\mathcal F_{\varepsilon_j})).
\end{equation*}
By the definition of $F_j$ we have that  $\eta_j(\otau_j)\in F_j^\circ$, and thus \eqref{e:FjasCH} implies that $\eta_j(\otau_j)$ can be written in the form
\[
\eta_j(\otau_j)=\sum_{(c_i,v_i)\in\mathcal F_{\varepsilon_j}} r_i\, \boldsymbol{\sigma}_{[j]}((c_i,v_i))=\boldsymbol{\sigma}_{[j]}\left (\sum_{(c_i,v_i)\in\mathcal F_{\varepsilon_j}} r_i\, (c_i,v_i)\right),
\]
for some $r_i\in \R_{>0}$ with $\sum r_i=1$. Let 
\begin{equation*}
v=\sum_{(c_i,v_i)\in\mathcal F_{\varepsilon_j}} r_i\, v_i.
\end{equation*}
Then by \eqref{e:grading},
\begin{equation}\label{e:vinCH}
v\in \ConvexHull^\circ(\{v_i\mid \delta_i=\varepsilon_j\}).
\end{equation}
On the other hand 
we have
\[
(c,v)=\sum_{(c_i,v_i)\in\mathcal F_{\varepsilon_j}} r_i\, (c_i,v_i)\in (\otau_j,0)+\widetilde E_{j-1}. 
\]
Therefore applying the projection $\beta_1$ we have
\begin{equation}\label{e:vinproj}
v=\beta_1(c,v)\in \beta_1\left((\otau_j,0)+\widetilde E_{j-1}\right)=\beta_1\left(\widetilde E_{j-1}\right).
\end{equation}

Recall that $E_{\ell}$ is the span in $V_{\ell}$ of $F_{\ell}-\eta_{\ell}(\otau_{\ell})$. And by Claim~2, the face $F_{\ell}$ is the convex hull of $\boldsymbol{\sigma}_{[\ell]}(\mathcal F_{\varepsilon_{\ell}})$. Therefore
\[
E_{\ell}=\boldsymbol{\sigma}_{[\ell]} \left(\Span\left(\{(c_i,v_i)-(\otau_{\ell},0)\mid (c_i,v_i)\in \mathcal F_{\varepsilon_{\ell}}\}\right)\right).
\]
In terms of the flag of subspaces \eqref{e:flag} we have $E_\ell=\widetilde E_{\ell}/\widetilde E_{\ell-1}$, and so we see that 
\[
\widetilde E_{\ell}= \Span\left(\{(c_i,v_i)-(\otau_{\ell},0)\mid (c_i,v_i)\in \mathcal F_{\varepsilon_{\ell}}\}\right)+\widetilde E_{\ell-1}.
\]
Applying the above equality recursively we obtain the following key description of $\widetilde E_{j-1}$ as a span,
\begin{equation}\label{e:keyspandescr}
\widetilde E_{j-1}= \Span\left(\{(c_i,v_i)-(\otau_{\ell},0)\mid  \ell\le j-1, \,(c_i,v_i)\in \mathcal F_{\varepsilon_{\ell}} \}\right).
\end{equation}
Applying the projection $\beta_1$ to both sides of \eqref{e:keyspandescr} gives us that
\begin{equation}\label{e:Spaneq}
\beta_1(\widetilde E_{j-1})=
\Span\left(\{v_i\mid  \ell\le j-1, \,(c_i,v_i)\in \mathcal F_{\varepsilon_{\ell}}\}\right).
\end{equation}
Recall that $\varepsilon_\ell=\otau_\ell-\tau$, and thus by Lemma~\ref{l:goodLaurent}.(2) we have that $\varepsilon_1<\varepsilon_2<\cdots <\varepsilon_j$. Now \eqref{e:Spaneq} together with the definition of $\mathcal F_\varepsilon$ implies that
\begin{equation}\label{e:projEj-1}
\beta_1(\widetilde E_{j-1})\subseteq\Span(\{v_i\mid \delta_i\le\varepsilon_{j-1}\}).
\end{equation}
Combining \eqref{e:projEj-1} with the special property of $v$ from \eqref{e:vinproj} we see that 
\begin{equation}\label{e:vinSpan}
v\in\Span(\{v_i\mid \delta_i<\varepsilon_j\}).
\end{equation}
The two observations about $v$,  \eqref{e:vinCH} and \eqref{e:vinSpan}, imply that the tropical critical condition \eqref{e:intersection} holds for $\varepsilon=\varepsilon_j$, completing the proof for such $\varepsilon$.

Now it remains to consider the case where $\varepsilon$ is not one of the $\varepsilon_j$.  In this case there exists a $j$ such that  $\varepsilon_{j-1}< \varepsilon <\varepsilon_{j}$. 
Since \eqref{e:intersection} holds trivially if $\mathcal F_{\varepsilon} =\emptyset$, compare Remark~\ref{r:epsilon0}, we can assume $\varepsilon$ is such that $\mathcal F_{\varepsilon}$ is nonempty. 

Let $(c_i,v_i)$ be an arbitrary element in $\mathcal F_{\varepsilon}$. The projection $\boldsymbol\sigma_{[j]}(c_i,v_i)\in V_j$ satisfies
\[
h_{a,j}(\boldsymbol\sigma_{[j]}(c_i,v_i))=\tau+\varepsilon<\tau+\varepsilon_j=\otau_j.
\]
Therefore by  \eqref{e:halfspace} in Claim~1, $\boldsymbol\sigma_{[j]}(c_i,v_i)\notin\overline\AugDelta_j$. This implies that 
\[
\boldsymbol\sigma_{[j]}(c_i,v_i)\in \ker(\beta_{j}),
\]
by definition of $\overline\AugDelta_{j}$. Thus, since $V_j=V_1/\widetilde E_{j-1}$,
\begin{equation}\label{e:kerneleq}
(c_i,v_i)\in\eta_{1}(\R)+ \widetilde E_{j-1}.
\end{equation}
The equation \eqref{e:kerneleq} holds for all $(c_i,v_i)\in \mathcal F_{\varepsilon}$, and therefore  for any convex combination. We take a strict convex combination of all of the elements of $\mathcal F_{\varepsilon}$ to obtain an element $(c,v)\in \eta_{1}(\R)+ \widetilde E_{j-1}$. Note that then 
\[
v\in\ConvexHull^\circ(\{v_i\mid \delta_i=\varepsilon\})\quad\text{and}\quad v\in\beta_1(\widetilde E_{j-1}).
\] 
Applying \eqref{e:projEj-1} to the observation on the right hand side above, we see that  
\[
v\in \Span(\{v_i\mid \delta_i\le\varepsilon_{j-1}\})\subset  \Span(\{v_i\mid \delta_i<\varepsilon\}).
\]
Therefore \eqref{e:intersection} holds again and  Claim~3 is proved. 
\end{proof}

\begin{cor}\label{c:valcrit} Suppose $W:T\to \KK$ is a positive, complete  Laurent polynomial and $p$ is a  critical point of $W$ lying in $T(\KK_{>0})$. Let $\Xi_W$ be the split complete  Newton datum associated to $W$ in Example~\ref{ex:XiW} and  $\crit(\Xi_W)$ its associated canonical point in $N_\R$. Then $\Val(p)=\crit(\Xi_W)$. 
\end{cor}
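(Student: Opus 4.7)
The plan is to deduce this corollary immediately by combining three results that have already been established in the section. First, Lemma~\ref{l:btropcrit} shows that whenever $p \in T(\KK_{>0})$ is a critical point of a positive Laurent polynomial $W$, the valuation $b = \Val(p) \in N_\R$ satisfies the tropical critical conditions for $W$ in the sense of Definition~\ref{d:tropcritcond}. This does not require completeness and is where positivity of $p$ is essential (it guarantees that the leading coefficients $\lambda_{i,0}$ extracted from $\gamma_i p^{v_i}$ are strictly positive, which in turn is what provides the convex combination in the tropical critical condition).

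Next, Proposition~\ref{p:specialpointtropcrit} produces a second element of $N_\R$ satisfying the same conditions: the canonical point $\crit(\Xi_W)$ constructed from the split complete Newton datum $\Xi_W$ of Example~\ref{ex:XiW}. This is the point where completeness of $W$ enters, since the recursive construction of $\crit(\Xi_W)$ requires the Newton polytope to be full-dimensional with $0$ in its interior.

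Finally, Lemma~\ref{l:uniqueness} (which again uses completeness, through the fact that the exponent vectors $v_i$ span $M_\R$) says that at most one element of $N_\R$ can satisfy the tropical critical conditions for $W$. Since both $\Val(p)$ and $\crit(\Xi_W)$ satisfy them, they must coincide, giving $\Val(p) = \crit(\Xi_W)$ as claimed. There is no real obstacle here; the corollary is a direct synthesis of Lemmas~\ref{l:btropcrit} and \ref{l:uniqueness} with Proposition~\ref{p:specialpointtropcrit}, and its role in the paper is to pin down the valuation of any positive critical point, so that the remaining work (Sections~\ref{s:leadingcoeff} and \ref{s:extension}) can focus on constructing and uniquely determining the leading term coefficient and the higher-order corrections of $\pcrit$.
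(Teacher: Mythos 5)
Your proposal is correct and is essentially identical to the paper's own proof, which likewise combines Lemma~\ref{l:btropcrit}, Proposition~\ref{p:specialpointtropcrit}, and the uniqueness Lemma~\ref{l:uniqueness} to conclude $\Val(p)=\crit(\Xi_W)$. Your added remarks about where positivity and completeness enter are accurate but not needed for the argument.
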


\begin{proof}
Given that $p$ is a positive critical point of $W$, then its valuation $\ValK(p)$  satisfies the tropical critical conditions by Lemma~\ref{l:btropcrit}. On the other hand the point $\crit(\Xi_W)$ satisfies the tropical critical conditions  by Proposition~\ref{p:specialpointtropcrit}. By uniqueness, Lemma~\ref{l:uniqueness},
there is {\it at most} one point in $N_\R$ satisfying the tropical critical conditions. It follows that $\Val(p)=\crit(\Xi_W)$. \end{proof}

\subsection{Leading coefficient of a positive critical point of $W$}\label{s:leadingcoeff} Recall that $W=\sum_{i=1}^n \gamma_i x^{v_i}$ denotes a positive, complete  Laurent polynomial over $\KK$, as in \eqref{e:genW}. By Corollary~\ref{c:valcrit} if a critical point $p\in T(\KK_{>0})$ of $W$ exists then its valuation is given by the canonical point  $\crit(\Xi_W)\in N_\R$ which was constructed in Section~\ref{s:critconstr}. We write $\crit$  for $\crit(\Xi_W)$, thinking of $W$ as fixed. 

Recall from Section~\ref{s:exp} the definition of the leading term and the logarithmic leading coefficient.
We have that the leading term $p_0$ of $p$ takes the form
\[
p_0=e^{\Log(p)}t^{\crit}.
\]
Our goal in this section is to determine $\Log(p)$ and prove that it is unique, that is, depends only on $W$.

\begin{defn}\label{d:Bep} Let $\varepsilon\ge 0$ and let $W=\sum_{i=1}^n \gamma_i x^{v_i}$ be a positive, complete  Laurent polynomial  (with its associated functions $\delta_i$ from Definition~\ref{d:deltai}, and its canonical point $\crit\in N_\R$). 
We define a linear subspace of $M_\R$ by
\[
B_{<\varepsilon}=\Span_\R(\{v_i\mid\delta_i(\crit)< \varepsilon\}). 
\]
We have a set of `relevant $\varepsilon$',
\[
\mathcal E:=\{\varepsilon\in \R_{\ge 0}\mid \delta_i(\crit)=\varepsilon\text { some $i\in 1,\dotsc, n$}\}.
\] 
 We may order this set, so that  
$\mathcal E=\{\varepsilon_0=0,\varepsilon_1,\dotsc, \varepsilon_m\}$
where $\varepsilon_j<\varepsilon_{j+1}$. 
Observe that if  $\varepsilon_{j-1}<\varepsilon\le \varepsilon_j$, then $B_{<\varepsilon}=B_{<\varepsilon_{j}}$. Thus we have a nested sequence of subspaces
\[
\{0\}\subseteq B_{<\varepsilon_1}\subseteq\cdots\subseteq B_{< \varepsilon_m}\subseteq M_\R,
\]
and dually,
\[
N_\R\supseteq B_{<\varepsilon_{1}}^\perp\supseteq\cdots\supseteq B_{< \varepsilon_m}^\perp\supseteq\{0\}.
\]
Note that the subspaces $B_{<\varepsilon}$ defined above depend only on the split complete  Newton datum $\Xi_W$ of $W$.
\end{defn}

\begin{remark} We note that the $\varepsilon_j$ which came up in the proof of Proposition~\ref{p:specialpointtropcrit} and were associated to the special faces $F_j$ lie in the set $\mathcal E$ of relevant $\varepsilon$, but may not exhaust this set. Namely this can happen if there are terms $\gamma_i x^{v_i}$ in $W$ which never give rise to a vertex of one of the $F_j$. Thus there is a small change in notation here.  
\end{remark}

We now introduce a set of conditions which we will show must be satisfied by the logarithmic leading coefficient $\Log(p)$ of any positive critical point $p$ of~$W$.

\begin{defn}[Critical coefficient conditions for $W$] \label{d:coeffcritcond}
Recall that we have  $W=\sum_{i=1}^n \gamma_i x^{v_i}$ with $\crit=\crit(\Xi_W)\in N_\R$ and  the piecewise linear maps $\delta_i:N_\R\to \R_{\ge 0}$. We say that $d\in N_\R$ satisfies the {\it critical coefficient conditions for $W$}  if
\begin{equation}\label{e:coeffcritcond}
\sum_{\{i\mid\delta_i(\crit)=\varepsilon\}}\Coeff(\gamma_i) e^{\langle v_i,d\rangle} v_i\in B_{<\varepsilon} 
\end{equation}
 for all $\varepsilon\ge 0$. 
 \end{defn}
 
 \begin{remark}
The key property of the canonical point $\crit=\crit(\Xi_W)$ is that it satisfies the tropical critical conditions \eqref{e:tropcritcond}. Using above notation, these say that there exists for every $\varepsilon\in\mathcal E$ a convex combination $v=\sum_{\{i\mid \delta_i(\crit)=\varepsilon\}} r_i v_i$ which lies in $B_{<\varepsilon}$. In light of this, the critical coefficient conditions can be interpreted as specifying that such a convex combination is given explicitly by setting the $r_i$ to be
\begin{equation}\label{e:coeffcritcondrem}
r_i:=\frac{\Coeff(\gamma_i) e^{\langle v_i,d\rangle}}{\sum_{\{k\mid\delta_k(\crit)=\varepsilon\}}\Coeff(\gamma_k) e^{\langle v_k,d\rangle}},
\end{equation}
where $i$ is such that $\delta_i(\crit)=\varepsilon$. Note  in particular that the existence of a $d$ satisfying the critical coefficient conditions requires the tropical critical conditions to hold for $\crit$. 
 \end{remark}

\begin{lemma}\label{l:coeffcrit} Let $W$ be a positive, complete  Laurent polynomial. If $p$ is a {\it positive} critical point of $W$,  then the critical coefficient conditions~\eqref{e:coeffcritcond} are satisfied for $d=\Log(p)\in N_\R$. 
 \end{lemma}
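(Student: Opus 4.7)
The plan is to unpack the critical point equations of $W$ at the level of the leading terms of $p$, reusing the computation already carried out in the proof of Lemma~\ref{l:btropcrit}. Write $p = e^{u}\,t^{\crit}\exp_T(w)$ with $u=\Log(p)$ and $w\in N_\m$, as in the decomposition \eqref{e:totalexp}. By Corollary~\ref{c:valcrit} we know that $\Val(p)=\crit$, so this decomposition really does occur at height $\crit$ rather than some other valuation. The goal is then simply to identify the leading-coefficient content of each equation $g_\varepsilon(p)=0$ from the proof of Lemma~\ref{l:btropcrit}.

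First I would compute the leading term of each summand $\gamma_i p^{v_i}$. From the multiplicativity of the valuation and of $\Coeff$,
\[
\gamma_i\, p^{v_i} = \Coeff(\gamma_i)\, e^{\langle v_i,u\rangle}\, t^{c_i+\langle v_i,\crit\rangle} + (\text{higher order in }t).
\]
In the notation $\gamma_i p^{v_i}=t^{c_i+\langle v_i,\crit\rangle}\sum_{\delta\ge 0}\lambda_{i,\delta}t^\delta$ used in the proof of Lemma~\ref{l:btropcrit}, this says that $\lambda_{i,0}=\Coeff(\gamma_i)\,e^{\langle v_i,u\rangle}$.

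Next I would invoke the key identity \eqref{e:critPtepsilon}/\eqref{e:keyeq} derived there: for every $\varepsilon\ge 0$,
\[
\sum_{\{i\,\mid\,\delta_i(\crit)=\varepsilon\}}\lambda_{i,0}\,v_i \;=\; -\sum_{\{i\,\mid\,\delta_i(\crit)<\varepsilon\}}\lambda_{i,\varepsilon-\delta_i(\crit)}\,v_i.
\]
Substituting the formula for $\lambda_{i,0}$ rewrites the left-hand side as $\sum_{\delta_i(\crit)=\varepsilon}\Coeff(\gamma_i)\,e^{\langle v_i,u\rangle} v_i$, which is precisely the vector appearing in \eqref{e:coeffcritcond}. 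The right-hand side is an $M_\C$-linear combination of $\{v_i\mid \delta_i(\crit)<\varepsilon\}$, hence lies in $\Span_\C(\{v_i\mid \delta_i(\crit)<\varepsilon\})$.

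Finally, since the left-hand side is manifestly real (the coefficients $\Coeff(\gamma_i)\,e^{\langle v_i,u\rangle}$ are positive reals and the $v_i$ lie in $M_\R$), taking real parts of the equation forces the vector to lie in $\Span_\R(\{v_i\mid\delta_i(\crit)<\varepsilon\})=B_{<\varepsilon}$. This is exactly the critical coefficient condition for $d=u=\Log(p)$. No single step is really an obstacle: the only mild subtlety is the descent from $\Span_\C$ to $\Span_\R$, which is handled by the reality of the leading coefficients exactly as in the end of the proof of Lemma~\ref{l:btropcrit}.
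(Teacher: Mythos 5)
Your proposal is correct and follows essentially the same route as the paper: expand each $\gamma_i p^{v_i}$ at valuation $c_i+\langle v_i,\crit\rangle$ (using $\Val(p)=\crit$), reuse the identity \eqref{e:keyeq} from the proof of Lemma~\ref{l:btropcrit}, and identify $\lambda_{i,0}=\Coeff(\gamma_i)e^{\langle v_i,\Log(p)\rangle}$ to read off the critical coefficient conditions. Your explicit remark on descending from $\Span_\C$ to $\Span_\R$ via reality of the leading coefficients is the same point the paper settles inside the proof of Lemma~\ref{l:btropcrit}.
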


\begin{proof}
Recall that $W=\sum_{i=1}^n \gamma_i x^{v_i}$ and $c_i=\ValK(\gamma_i)$.
Consider the summand $ \gamma_i   p^{v_i}\in\KK_{>0}$ of $W(p)$ and expand it in terms of $t$ giving
\begin{equation}\label{e:lambdaiagain}
 \gamma_i   p^{v_i}=t^{c_i+\langle v_i,\crit\rangle}\lambda_i=t^{c_i+\langle v_i,\crit\rangle}(\sum_{\delta\ge 0}\lambda_{i,\delta} t^{\delta}).
 \end{equation} 
 This defines elements $\lambda_i\in \KK_{>0}$  and $\lambda_{i,\delta}\in\C$. We follow the proof of Lemma~\ref{l:btropcrit}, taking the derivatives of $W$ along $T$-invariant vector fields $\partial_u$, and deduce from the fact that $p$ is a critical point of $W$, that  the identity
\begin{equation}\label{e:keyeqagain}
\sum_{\{i\mid \delta_i(\crit)=\varepsilon\}} \lambda_{i,0} v_i =- \sum_{\{i\mid  \delta_i(\crit)<\varepsilon\}} \lambda_{i,\varepsilon-\delta_i(\crit)} v_i,
\end{equation}
holds for all $\varepsilon\ge 0$, compare \eqref{e:keyeq}.  Now observe that
\[
\lambda_{i,0}=\Coeff(\gamma_i p^{v_i})=\Coeff(\gamma_i) \Coeff(p^{v_i})=\Coeff(\gamma_i)e^{\langle v_i,\Log(p)\rangle}.
\]
The left hand side of \eqref{e:keyeqagain} therefore agrees with the left hand side of \eqref{e:coeffcritcond} for $d=\Log(p)$, and thus the equation \eqref{e:keyeqagain} implies the critical coefficient conditions for $\Log(p)$. 
\end{proof}

The goal of this subsection is to prove the following proposition.

\begin{prop}\label{p:coefficient} For $W$ a positive, complete  Laurent polynomial, there exists a unique element $\dcoeff=\dcoeff(W)\in N_\R$ such that the critical coefficient conditions \eqref{e:coeffcritcond} are satisfied for $d=\dcoeff$. 
\end{prop}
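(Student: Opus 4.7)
My plan is to establish both existence and uniqueness of $\dcoeff$ by a layer-by-layer induction based on the filtration coming from the relevant values of $\delta_i(\crit)$, reducing each layer to Galkin's theorem (Proposition~\ref{p:Galkin}). Concretely, for $j = 0, \dotsc, m$, let $V_j := \Span_\R\{v_i \mid \delta_i(\crit) \leq \varepsilon_j\}$; then $V_{-1} := \{0\} \subseteq V_0 \subseteq V_1 \subseteq \cdots \subseteq V_m = M_\R$, where the final equality uses completeness of $W$. I would choose complements $W_j$ with $V_j = V_{j-1} \oplus W_j$, so that $M_\R = \bigoplus_j W_j$ and dually $N_\R = \bigoplus_j W_j^*$, and write any candidate $d \in N_\R$ as $d = d_0 + \cdots + d_m$ with $d_k \in W_k^*$. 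The crucial observation is that the critical coefficient condition at level $\varepsilon_j$ involves only vectors $v_i \in V_j$, so the pairings $\langle v_i, d\rangle$ appearing there depend only on $d_0, \dotsc, d_j$; this enables a recursive determination of the $d_k$.

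For the base case $j=0$, the condition reads
\[
\sum_{\delta_i(\crit) = 0} \Coeff(\gamma_i)\, e^{\langle v_i, d_0\rangle}\, v_i \ =\ 0 \quad \text{in } V_0,
\]
which is precisely the critical-point equation for the real-coefficient Laurent polynomial $L_0 := \sum_{\delta_i(\crit) = 0} \Coeff(\gamma_i) x^{v_i}$ on a torus of dimension $\dim V_0$. By the tropical critical condition at $\varepsilon = 0$, established in Proposition~\ref{p:specialpointtropcrit}, the Newton polytope of $L_0$ inside $V_0$ is full-dimensional and contains $0$ in its interior, so Proposition~\ref{p:Galkin} produces a unique positive critical point $e^{d_0}$ and hence a unique $d_0 \in W_0^*$. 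For the inductive step, assuming $d_0, \dotsc, d_{j-1}$ have been determined, I would set
\[
\tilde c_i\ :=\ \Coeff(\gamma_i)\, e^{\langle v_i,\, d_0 + \cdots + d_{j-1}\rangle}\ >\ 0
\]
for each $i$ with $\delta_i(\crit) = \varepsilon_j$, and reduce the critical coefficient condition at $\varepsilon_j$ modulo $V_{j-1}$ to
\[
\sum_{\delta_i(\crit) = \varepsilon_j} \tilde c_i\, e^{\langle \bar v_i,\, d_j\rangle}\, \bar v_i \ =\ 0 \quad \text{in } V_j/V_{j-1},
\]
which is the critical-point equation for the Laurent polynomial $L_j(\bar x) := \sum \tilde c_i\, \bar x^{\bar v_i}$ on the torus associated to $V_j/V_{j-1}$. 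The images $\bar v_i$ span $V_j/V_{j-1}$ by construction of the filtration, and $0$ lies in the interior of $\ConvexHull(\{\bar v_i\})$ by the tropical critical condition at $\varepsilon_j$ from Proposition~\ref{p:specialpointtropcrit} (cf.~Remark~\ref{r:epsilon0}). Galkin's theorem then uniquely determines $d_j$.

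Setting $\dcoeff := d_0 + d_1 + \cdots + d_m$ yields a point of $N_\R$ satisfying all the critical coefficient conditions, and uniqueness follows step by step from the uniqueness in each Galkin application. The only real technical point I anticipate is justifying that Galkin's theorem applies at each level: the vectors $\bar v_i$ live in the rational vector space $V_j/V_{j-1}$, which carries a natural lattice inherited from $M$ but need not have the $\bar v_i$ as integer coordinates in a preferred basis. This should be handled either by clearing denominators to reduce to the integer-exponent hypothesis of Proposition~\ref{p:Galkin}, or by appealing to the mild extension of Galkin's argument to positive real coefficients with real exponents discussed in Section~\ref{s:rationalityetc}. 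Finally, while the decomposition $d = d_0 + \cdots + d_m$ depends on the choice of complements $W_j$, the uniqueness statement above guarantees that the resulting $\dcoeff$ does not.
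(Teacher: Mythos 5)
Your proposal is correct and follows essentially the same route as the paper: a layer-by-layer reduction along the filtration $B_{\le\varepsilon_0}\subseteq\cdots\subseteq B_{\le\varepsilon_m}=M_\R$, applying Galkin's existence-and-uniqueness result to an exponential sum on each graded piece $B_{\le\varepsilon_j}/B_{<\varepsilon_j}$, with the tropical critical conditions satisfied by $\crit$ (Proposition~\ref{p:specialpointtropcrit}, Remark~\ref{r:epsilon0}) supplying the ``zero in the interior'' hypothesis; the paper merely works with the dual quotients $B^\perp_{<\varepsilon_j}/B^\perp_{\le\varepsilon_j}$ and a translation recursion on coset representatives instead of your chosen complements $W_j$, and proves uniqueness by the same stepwise induction. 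Your closing worry about integrality of the exponents is moot, since the form of Galkin's result the paper invokes (Lemma~\ref{l:GalkinGen}) is already stated for arbitrary real exponents.
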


\begin{remark}
Note that the canonical point $\crit(\Xi_W)\in N_\R$ depends only on the complete  Newton datum $\Xi_W$ and its construction has a piecewise-linear flavour. The critical coefficient $\dcoeff(W)\in N_\R$ on the other hand depends on $W$ itself and its construction is real analytic. 
\end{remark}

We begin by stating a lemma which is proved by Galkin \cite[Lemma]{Galkin}, related to our Proposition~\ref{p:Galkin}.

\begin{lemma}\cite{Galkin}\label{l:GalkinGen}
Suppose a function $f:\R^r\to \R$ is given by a linear combination with positive coefficients of exponential functions,
\begin{equation}\label{e:expsum}
f(\rho)=\sum_{i=1}^n C_i e^{\langle\nu_i,\rho\rangle},
\end{equation}
where $C_i\in\R_{>0}$ and $\nu_i\in (\R^r)^*$.
If $\ConvexHull(\{\nu_i\mid i=1,\dotsc, n\})\subset (\R^r)^*$ is full-dimensional and contains $0$ in its interior, then $f$ has a unique critical point. \qed
\end{lemma}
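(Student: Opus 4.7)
The plan is to show that $f$ is strictly convex and coercive; together these give a unique minimum, which is then the unique critical point.

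First, I would compute the Hessian. Each summand $c_i e^{\langle \nu_i, \rho\rangle}$ has Hessian $c_i e^{\langle \nu_i, \rho\rangle}\, \nu_i\otimes \nu_i$, so
\[
\operatorname{Hess}_\rho f = \sum_{i=1}^k c_i e^{\langle \nu_i, \rho\rangle}\, \nu_i\otimes\nu_i.
\]
For any $u\in\R^m\setminus\{0\}$ we get $\langle u,\operatorname{Hess}_\rho f\cdot u\rangle = \sum c_i e^{\langle\nu_i,\rho\rangle}\langle\nu_i,u\rangle^2\ge 0$, with equality only if $\langle\nu_i,u\rangle=0$ for every $i$. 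But since $\ConvexHull(\{\nu_i\})$ is full-dimensional in $(\R^m)^*$, the $\nu_i$ span $(\R^m)^*$, so $u$ must vanish. Thus $f$ is strictly convex on $\R^m$.

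Next I would establish coercivity, which is the step that genuinely uses that $0$ lies in the \emph{interior} of $\ConvexHull(\{\nu_i\})$, not just in the convex hull. Fix any unit vector $u\in\R^m$. Since $0$ is in the interior, there is no closed half-space through $0$ containing all $\nu_i$ (the hyperplane $\{\alpha=0\}$ for $\alpha\in (\R^m)^{**}=\R^m$ would separate $0$ from some interior neighbourhood otherwise). Applied to the functional $u\in\R^m$, this means there exists $i$ with $\langle\nu_i,u\rangle > 0$. Setting $\mu(u):=\max_i \langle\nu_i,u\rangle > 0$ and $c_{\min}=\min_i c_i > 0$, we get $f(t u)\ge c_{\min}\, e^{t\mu(u)}\to\infty$ as $t\to\infty$. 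By compactness of the unit sphere, $\mu(u)$ has a positive lower bound $\mu_0 > 0$ (since $u\mapsto \max_i\langle\nu_i,u\rangle$ is continuous and positive on the sphere), which gives $f(\rho)\ge c_{\min}\, e^{\mu_0 |\rho|}\to\infty$ as $|\rho|\to\infty$. Thus $f$ is coercive.

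Finally, coercivity implies $f$ attains its infimum on some nonempty compact sublevel set, yielding at least one critical point. Strict convexity of $f$ implies that every critical point is the unique global minimum, because the sublevel sets are strictly convex and any critical point of a strictly convex $C^2$ function is its unique minimiser. Combining these gives existence and uniqueness of a critical point of $f$, as required.

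The main obstacle is the coercivity argument: it is the one place where the interior (as opposed to merely convex hull) hypothesis is used, and it is easy to confuse with the weaker condition that the $\nu_i$ merely span $(\R^m)^*$. Once coercivity and strict convexity are in hand, the conclusion is standard convex analysis.
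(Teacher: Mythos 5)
Your proposal is correct and follows essentially the same route the paper indicates for Galkin's lemma: coercivity (values tending to $+\infty$ in every unbounded direction, using that $0$ is interior to $\ConvexHull(\{\nu_i\})$) gives existence of a minimum, and everywhere positive-definiteness of the Hessian $\sum_i c_i e^{\langle\nu_i,\rho\rangle}\nu_i\otimes\nu_i$ gives uniqueness. Your compactness argument on the unit sphere making the coercivity bound uniform is a clean way to carry out the step the paper only sketches.
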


The idea of the proof of Galkin's lemma is to observe that $f$ has values in $\R_{>0}$, and show that the special conditions imply that the values tend to positive infinity in any unbounded direction. Thus $f$ has a critical point which is a minimum. Its uniqueness is shown by observing that the Hessian of $f$ is  always positive definite. Note that Proposition~\ref{p:Galkin} about the Laurent polynomial $L$ follows from Lemma~\ref{l:GalkinGen} by writing $L$ in terms of logarithmic coordinates.

\begin{lemma}\label{l:fepd} For $W$ a positive, complete  Laurent polynomial,
$d\in N_\R$ satisfies the critical coefficient conditions for $W$ if and only if the function $f_{\varepsilon,d}:  B_{<\varepsilon}^\perp\to \R$,
\[
f_{\varepsilon,d}: \rho\to \sum_{\{i\mid\delta_i(\crit)=\varepsilon\}} \Coeff(\gamma_i)e^{\langle v_i,d+\rho\rangle}
\]
has a critical point at $0$ for all $\varepsilon\ge 0$. 
\end{lemma}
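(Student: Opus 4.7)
The plan is to show both directions simultaneously by explicitly computing the gradient of $f_{\varepsilon,d}$ at $\rho = 0$ and matching it to the critical coefficient condition.

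First I would fix $\varepsilon \ge 0$ and compute the directional derivative of $f_{\varepsilon,d}$ along an arbitrary $u \in B_{<\varepsilon}^\perp \subseteq N_\R$. Since $f_{\varepsilon,d}$ is a finite linear combination of exponentials in $\rho$, termwise differentiation gives
\[
\partial_u f_{\varepsilon,d}(\rho) = \sum_{\{i\mid \delta_i(\crit)=\varepsilon\}} \Coeff(\gamma_i)\, e^{\langle v_i, d+\rho\rangle} \langle v_i, u\rangle.
\]
Evaluating at $\rho = 0$ and pulling the pairing out yields
\[
\partial_u f_{\varepsilon,d}(0) = \left\langle \sum_{\{i\mid \delta_i(\crit)=\varepsilon\}} \Coeff(\gamma_i)\, e^{\langle v_i, d\rangle}\, v_i,\; u \right\rangle.
\]

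Next I would interpret the condition ``$0$ is a critical point of $f_{\varepsilon,d}$'' as the vanishing of this derivative for every $u \in B_{<\varepsilon}^\perp$. Setting $S_\varepsilon := \sum_{\{i\mid \delta_i(\crit)=\varepsilon\}} \Coeff(\gamma_i)\, e^{\langle v_i, d\rangle}\, v_i \in M_\R$, the criticality at $0$ is therefore equivalent to $\langle S_\varepsilon, u\rangle = 0$ for all $u \in B_{<\varepsilon}^\perp$, i.e.\ to $S_\varepsilon \in (B_{<\varepsilon}^\perp)^\perp$. By the standard duality between the subspace $B_{<\varepsilon} \subseteq M_\R$ and its annihilator $B_{<\varepsilon}^\perp \subseteq N_\R$, we have $(B_{<\varepsilon}^\perp)^\perp = B_{<\varepsilon}$, so this is precisely the critical coefficient condition \eqref{e:coeffcritcond} at the given $\varepsilon$.

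Finally, I would quantify over all $\varepsilon \ge 0$ to conclude the equivalence. There is no real obstacle here: the lemma is essentially a restatement of \eqref{e:coeffcritcond} in terms of the vanishing of the gradient of an explicit exponential sum, and the proof reduces to a one-line differentiation together with the duality identification $(B_{<\varepsilon}^\perp)^\perp = B_{<\varepsilon}$. The reason the lemma is worth recording is that it repackages the algebraic condition as a genuine critical point problem, setting up the application of Galkin's Lemma~\ref{l:GalkinGen} in the next step of the existence argument for $\dcoeff$.
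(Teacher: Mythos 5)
Your proposal is correct and follows essentially the same route as the paper's own proof: termwise differentiation of $f_{\varepsilon,d}$ at $\rho=0$, followed by the identification that $\langle S_\varepsilon,u\rangle=0$ for all $u\in B_{<\varepsilon}^\perp$ is equivalent to $S_\varepsilon\in B_{<\varepsilon}$, which is precisely the critical coefficient condition \eqref{e:coeffcritcond} for each $\varepsilon\ge 0$. No gaps; nothing further needed.
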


\begin{proof} 
The function  $f_{\varepsilon,d}$ has a critical point at $0$ if and only if for all $u\in B_{<\varepsilon}^\perp$ the derivative in the direction of $u$ vanishes at $0$. This derivative is given by
\[
\partial_u f_{\varepsilon,d}\,(\rho)=\sum_{\{i\mid \delta_i(a)=\varepsilon\}} \Coeff(\gamma_i) e^{\langle v_i,d+\rho\rangle}\langle u,v_i\rangle=
\langle u, \sum_{\{i\mid \delta_i(\crit)=\varepsilon\}} \Coeff(\gamma_i) e^{\langle v_i,d+\rho\rangle}v_i\rangle.
\]
Clearly, for $v\in M_\R$ we have $\langle v, u\rangle=0$ for all $u\in B_{<\varepsilon}^\perp$ if and only if $v\in B_{<\varepsilon}$. Thus
\[
\partial_u f_{\varepsilon,d}\,(0)=0\iff
 \sum_{\{i\mid \delta_i(\crit)=\varepsilon\}} \Coeff(\gamma_i) e^{\langle v_i, d\rangle}v_i\in B_{<\varepsilon}.
\]
Therefore the condition on $d$ that $\partial_u f_{\varepsilon,d}\,(0)=0$ for all $\varepsilon\ge 0$ and $u\in B_{<\varepsilon}^\perp$ is equivalent to the critical coefficient conditions from Definition~\ref{d:coeffcritcond}.
\end{proof}

\begin{proof}[Proof of Proposition~\ref{p:coefficient}] By Lemma~\ref{l:fepd} we need to show existence and uniqueness of a point $d=\dcoeff$ such that
\[
f_{\varepsilon,d}: \rho\to \sum_{\{i\mid\delta_i(\crit)=\varepsilon\}} \Coeff(\gamma_i)e^{\langle v_i,d+\rho\rangle}
\]
has a critical point at $0\in  B_{<\varepsilon}^\perp$ for all $\varepsilon\ge 0$. 

First we show existence. Consider the span $B_{\le \varepsilon}=\Span(\{ v_j\mid\delta_j(\crit)\le\varepsilon\})$, so that we have
\[
 B^\perp_{\le \varepsilon}\subseteq B^\perp_{<\varepsilon}\subseteq N_\R.
\] 
Note that if $\delta_i(\crit)=\varepsilon$ and $\rho \in B_{\le\varepsilon}^\perp$ then $\langle v_i,\rho\rangle=0$. It follows that we can define a function $\bar f_{\varepsilon,d}$ on $ B^\perp_{<\varepsilon}/ B^\perp_{\le \varepsilon}$ by the commutative diagram
\begin{equation*}
\begin{tikzcd}  B_{<\varepsilon}^\perp
\arrow{r}{\pi} \ar{dr}[swap]{f_{\varepsilon,d}} &B^\perp_{<\varepsilon}/ B^\perp_{\le \varepsilon}\arrow{d}{\bar f_{\varepsilon,d}} \\
&\R .
\end{tikzcd}
\end{equation*}
Indeed, if $\delta_i(\crit)=\varepsilon$ then  pairing with $v_i$ defines an element 
\begin{equation}\label{e:vibar}
\bar v_i\in (B_{<\varepsilon}^\perp/B_{\le\varepsilon}^\perp)^*,
\end{equation}
and the function $\bar f_{\varepsilon,d}$ is a sum of exponential functions $\bar \rho\mapsto e^{\langle \bar v_i, \bar \rho\rangle}$ with positive coefficients. In other words, 
if we choose a basis so that $B_{<\varepsilon}^\perp/ B^\perp_{\le \varepsilon}=\R^m$, then it is of the form \eqref{e:expsum}. Consider the polytope
\begin{equation}\label{e:Deltaep}
\AugDelta_\varepsilon:=\ConvexHull(\{\bar v_i\mid\delta_i(\crit)=\varepsilon\})\subset (B_{<\varepsilon}^\perp/B_{\le\varepsilon}^\perp)^*.
\end{equation}       

\vskip .2cm
\paragraph{\it Claim:} The polytope $\AugDelta_\varepsilon$ in $(B_{<\varepsilon}^\perp/B_{\le\varepsilon}^\perp)^*$ is full-dimensional and contains $0$ in its interior.
\vskip .2cm

\paragraph{\it Proof of the Claim:} 
Note that for any triple of finite-dimensional vector spaces $U\subseteq V\subseteq W$ we have a natural isomorphism $(U^\perp/V^\perp)^*\cong V/U$. 
Applied to the triple $B_{<\varepsilon}\subseteq B_{\le \varepsilon} \subseteq M_\R$ we obtain that 
\[
(B_{<\varepsilon}^\perp/B_{\le\varepsilon}^\perp)^*\cong B_{\le \varepsilon}/B_{<\varepsilon}.
\]
Moreover it is straightforward that under this isomorphism the vector $\bar v_i$ from \eqref{e:vibar} is identified with the coset $v_i+ B_{<\varepsilon}$ on the right hand side, which we also denote $\bar v_i$. 
It follows from its construction that $B_{\le \varepsilon}/B_{<\varepsilon}$ is spanned by the $\bar v_i$ with $\delta_i(\crit)=\varepsilon$. This implies that the polytope $\AugDelta_\varepsilon$ is full-dimensional.

Now we use that $\crit=\crit(\Xi_W)$ satisfies the tropical critical conditions \eqref{e:tropcritcond},   by Proposition~\ref{p:specialpointtropcrit}. It follows from these conditions that $0$ lies in the 
interior of $\AugDelta_\varepsilon$. See Remark~\ref{r:epsilon0}, and in particular \eqref{e:genep}. 
This  concludes the proof of the claim.
\vskip .2cm

The above claim together with Lemma~\ref{l:GalkinGen} implies that  the functions $\bar f_{\varepsilon,d}$ each have a unique critical point in $B_{<\varepsilon}^\perp/B_{\le\varepsilon}^\perp$. We denote this critical point by $\bar \rho_{\varepsilon, d}$.
Let us furthermore pick an arbitrary coset representative $\rho_{\varepsilon,d}$ of $\bar \rho_{\varepsilon, d}\in B_{<\varepsilon}^\perp/B_{\le\varepsilon}^\perp$. It follows that  $\rho_{\varepsilon,d}\in B_{<\varepsilon}^\perp$ is a  critical point of $f_{\varepsilon,d}$.

 Note that for $r\in B_{<\varepsilon}^\perp $ and any $d\in N_\R$ we have a commutative diagram
\begin{equation}\label{e:CDtransl}
\begin{tikzcd}  B_{<\varepsilon}^\perp
\arrow{r}{\trans_{r}} \ar{d}[swap]{ f_{\varepsilon,d+ r}} &B^\perp_{<\varepsilon}\arrow{dl}{ f_{\varepsilon,d}} \\
\R &
\end{tikzcd}
\end{equation}
where $\trans_{r}:\rho\mapsto\rho+ r$ is the map of  translation by $r$. It follows from this diagram that if $r=\rho_{\varepsilon,d}$, our chosen critical point of $f_{\varepsilon,d}$, then $0$ is a critical point of $f_{\varepsilon,d+\rho_{\varepsilon,d}}$.

Now recall the set  $\mathcal E=\{\varepsilon_0=0,\varepsilon_1,\dotsc, \varepsilon_m\}$ of relevant $\varepsilon$ from Definition~\ref{d:Bep} and the nested sequence of domains 
\[
N_\R\supseteq B_{<\varepsilon_{1}}^\perp\supseteq\cdots\supseteq B_{< \varepsilon_m}^\perp\supseteq\{0\}
\]
for our functions $f_{\varepsilon_j,d}$. We need to construct a $d\in N_\R$ such that  $0$ is a critical point of $f_{\varepsilon_j,d}$ for all $j$.
 We construct a sequence of $d_j\in N_\R$, where $j=-1,\dotsc, m$, by the following recursion. 
 
\begin{itemize}
\item Let $d_{-1}=0$. 
\item If $d_{j-1}$ has been constructed, then we set $d_{j}:=d_{j-1}+\rho_{\varepsilon_{j}, d_{j-1}}$. 
\end{itemize}

We record the following properties of the elements $d_j$ of $N_\R$.

\begin{enumerate}
\item By the commutative diagram \eqref{e:CDtransl} with $\varepsilon=\varepsilon_{j}$, $r=\rho_{\varepsilon_{j}, d_{j-1}}$ and $d=d_{j-1}$, 
we have that $0$ is a critical point of the left hand side vertical map, $ f_{\varepsilon_{j},d_{j}}$. 
\item For any $j$ we have $d_j-d_{j-1}\in B_{<\varepsilon_j}^\perp$.
\item Applying (2) recursively, it follows that for any $0\le h\le j$, we have 
\[d_{j}\equiv d_{h}\mod B^\perp_{<\varepsilon_{h+1}},\]
and therefore  $f_{\varepsilon_{h}, d_{j}}=f_{\varepsilon_{h}, d_{h}}$.
\item Moreover, if $0\le h\le j$ then $0$ is a critical point of $f_{\varepsilon_{h}, d_{j}}$, by the combination of (1) and (3).
\end{enumerate}

Let us set $\dcoeff:=d_m$. Then  (4) above implies that $0$ is a critical point of $f_{\varepsilon,\dcoeff}$ for all $\varepsilon\ge 0$. Therefore $\dcoeff$ satisfies the critical coefficient conditions. 

The above construction of $\dcoeff$ appears to depend on the choices of representatives $\rho_{\varepsilon_j,d}$ of the critical points $\bar \rho_{\varepsilon_j,d}$ of the $\bar f_{\varepsilon_j,d}$. It remains to prove that $\dcoeff$ is the {\it unique} element of $N_\R$ satisfying the critical coefficient conditions,  in particular that different choices of representatives lead to the same $\dcoeff$.  

To prove uniqueness, let us suppose that $\dcoeff'\in N_\R$ satisfies the critical coefficient conditions. Thus we have that $0$ is a critical point of $f_{\varepsilon_j, \dcoeff'}$ for all $j=0,\dotsc, m$. For convenience we also introduce $\varepsilon_{m+1}>\varepsilon_m$, so that $B_{<\varepsilon_{m+1}}=M_\R$, and we have $B_{<\varepsilon_{m+1}}^\perp=\{0\}$. To prove that $\dcoeff'=\dcoeff$ we show inductively that $\dcoeff'\equiv \dcoeff \mod B^\perp_{<\varepsilon_j}$ for $j=0,\dotsc m+1$, starting with the trivial case, $j=0$.
By our induction hypothesis we assume that we have shown that
\[
\dcoeff'\equiv \dcoeff \mod B_{<\varepsilon_{j-1}}^\perp.
\]
Then we have the commutative diagram 
\begin{equation}\label{e:CDtransl3}
\begin{tikzcd}  B^\perp_{<\varepsilon_{j-1}}
\arrow{r}{\trans_{{\dcoeff'-\dcoeff}}} \ar{d}[swap]{ f_{\varepsilon_{j-1}, \dcoeff'}} &\quad B^\perp_{<\varepsilon_{j-1}}\arrow{dl}{ f_{\varepsilon_{j-1},\dcoeff}} \\
\R &
\end{tikzcd}.
\end{equation}
Since $0$ is a critical point of $f_{\varepsilon_{j-1},{\dcoeff'}}:B^\perp_{<\varepsilon_{j-1}}\to \R$, by assumption, it follows from the diagram that $\dcoeff'-\dcoeff$ is a critical point of the right hand map $f_{\varepsilon_{j-1},\dcoeff}$. On the other hand, by assumption on $\dcoeff$, the right hand map has critical point $0$ as well.  
Moreover, by Lemma~\ref{l:GalkinGen}  the function $\bar f_{\varepsilon_{j-1},\dcoeff}:B^\perp_{<\varepsilon_{j-1}}/B_{<\varepsilon_{j}}^\perp\to \R$ has a unique critical point, $\bar\rho_{\varepsilon_{j-1},\dcoeff}$. Therefore it follows that  
\[
\dcoeff'-\dcoeff\equiv 0\mod B_{<\varepsilon_{j}}^\perp.
\]
In other words, we have shown that $\dcoeff'\equiv \dcoeff \mod B^\perp_{<\varepsilon_j}$ and the induction is complete. Setting $j=m+1$ so that $B^\perp_{<\varepsilon_{m+1}}=\{0\}$ we see that $\dcoeff'=\dcoeff$, which completes the proof of uniqueness. 
\end{proof}

\subsection{Construction of the positive critical point}\label{s:extension}

 Consider the positive, complete  Laurent polynomial $W:T(\KK)\to\KK$, given by $W=\sum \gamma_i x^{v_i}$.
Suppose that $p$ is a positive critical point of $W$. So far we have determined the leading term $p_0$ of $p$  completely. Namely, by Lemmas~\ref{l:btropcrit} and \ref{l:uniqueness} and  Proposition~\ref{p:specialpointtropcrit} regarding $\crit=\crit(\Xi_W)\in N_\R$ and Lemma~\ref{l:coeffcrit} and Proposition~\ref{p:coefficient} regarding $\dcoeff=\dcoeff(W)\in N_\R$ it follows that $p_0=e^{\dcoeff}t^{\crit}$.
Therefore $p$ must be of the form $e^{\dcoeff}t^{\crit}\exp_T(w)$ for some $w\in N_\mathfrak m$. 

In this section we construct a $\wcrit\in N_\mathfrak m$ such that $\pcrit:=e^{\dcoeff}t^{\crit}\exp_T(\wcrit)$ is a critical point of $W$, and we prove that this condition determines $\wcrit$ uniquely. Thus our constructed $\pcrit$  is the only critical point of $W$ in $T(\KK_{>0})$. 

In order to describe the critical points of $W$ we associate to $W$ the
 function $G:T(\KK)\to M_{\KK}$ defined by
\begin{equation}\label{e:G}
G(x):=\sum_{i=1}^n \gamma_i x^{v_i}v_i.
\end{equation}
The function $G$ encodes all of the derivatives $\partial_u W$ of $W$, where $u\in N_\R$, via $(\partial_uW)(x)=\langle G(x),u\rangle$. We recall that this function $G$ was first introduced in \eqref{e:Gofx}. Clearly $p$ is a critical point of $W$ if and only if $G(p)=0$. From now on let us write $\delta_i$ for the value $\delta_i(\crit)$ of the piecewise linear function from Definition~\ref{d:deltai}. We define the following partial versions of the function $G:T(\KK)\to M_{\KK}$.

\begin{defn}[$G_{ \varepsilon},G_{\le \varepsilon}$ and $G_{< \varepsilon}$
]
Let $\varepsilon\ge 0$. Define maps $G_{\varepsilon}, G_{\le\varepsilon}$ and $G_{< \varepsilon}
:T(\KK)\to M_\KK$ by
\begin{eqnarray*}
G_{\varepsilon}(x)&:=& \sum_{\{i\mid \delta_i=\varepsilon\}} \gamma_i x^{v_i}v_i,\\
G_{\le\varepsilon}(x)&:= &\sum_{\{i\mid \delta_i\le\varepsilon\}} \gamma_i x^{v_i}v_i,\\\
G_{<\varepsilon}(x)&:=& \sum_{\{i\mid \delta_i<\varepsilon\}} \gamma_i x^{v_i}v_i.
\end{eqnarray*}
Recall that we denote by $\varepsilon_0=0<\varepsilon_1<\dotsc< \varepsilon_m$ the elements of the set  $\mathcal E=\{\delta_i\mid i=1,\dotsc, n\}$.  Therefore $G(x)=\sum_{j=0}^m  G_{\varepsilon_j}(x)=G_{\le\varepsilon_m}(x)$. Observe that $G_{\le \varepsilon}$ takes values in $B_{\le \varepsilon}\otimes \KK$, see Definition~\ref{d:Bep}.

\end{defn}

\begin{defn}[$\VALVK$]
Suppose $V_\KK=V_\R\otimes_\R\KK$ where $V_\R$ is a real vector space and $\langle\ ,\ \rangle: V_{\R}\x V_{\R}^*\to \R$ denotes the dual pairing (and its bilinear extension over $\KK$). For any nonzero $F\in V_\KK$ we define $\VALVK(F)\in\R$ by 
\[
\VALVK(F):=\min_{u\in V_{\R}^*} (\ValK(\langle F,u\rangle)).
\]
Equivalently, $\VALVK(F)$ is the unique real number such that the expansion of $F$ in $t$ takes the form
\begin{equation}\label{e:F}
F=t^{\VALVK(F)}\sum_{\delta\ge 0} f_\delta t^\delta,
\end{equation}
with $f_\delta\in V_\C$ and $f_0\ne 0$. As usual $\VALVK(0):=\infty$. We also define $
\Coeff_{\mu}(F)\in V_\C$ to be the coefficient of $t^\mu$ in the expansion of $F$, thus if $F\ne 0$ is expressed as in \eqref{e:F}, then
\[
\Coeff_{\mu}(F)=f_{\delta}, \quad\text{ where $\delta=\mu-\VALVK(F)$.}
\]
 The coefficient $\Coeff_{\VAL_{V_{\KK}}(F)}(F)=f_0\in V_\C$ is also referred to as the leading vector-coefficient of $F$ and denoted simply $\Coeff(F)$, generalising Definition \ref{d:Coeff}. 
\end{defn}

Our construction of the positive critical point of $W$ will be recursive and relies on the following proposition. 

\begin{prop}\label{p:recursion} Let $W=\sum_i\gamma_i x^{v_i}$ be a positive, complete  Laurent polynomial. 
Let $\crit=\crit(\Xi_W)$ be the canonical point of the associated complete  Newton datum $\Xi_W$ and  $\dcoeff=\dcoeff(W)$ the critical coefficient for $W$. Set $\tau:=\Trop(W)(\crit)$. 

Let $G=\sum_i\gamma_i x^{v_i}v_i$ be the $M_\KK$-valued function associated to $W$ from \eqref{e:G}. Suppose $p\in T(\KK_{>0})$ has leading term $p_0=e^{\dcoeff}t^{\crit}$ and satisfies $G(p)\ne 0$. 
Set $
 \nu:=\VAL_{M_{\KK}}(G(p))-\tau.
 $
\begin{enumerate}
\item 
Suppose $\varepsilon_h$ is the unique minimal element of $\mathcal E=\{\varepsilon_0,\dotsc,\varepsilon_m\}$ such that  
\[
\Coeff_{\tau+\nu}(G(p))\in B_{\le\varepsilon_h}\otimes\C,
\] 
compare Definition~\ref{d:Bep}. Then we have that $0\le \varepsilon_h<\nu$. In particular $\nu>0$.  
\item 
There exists an element $u'\in B^\perp_{<\varepsilon_h}\otimes\C$ such that $ p':=p\exp_T(t^{\nu-\varepsilon_h}u')$
satisfies 
\begin{equation}\label{e:p'1}
\VAL_{M_\KK}(G(p'))\ge\tau+\nu
\end{equation}
and
\begin{equation}\label{e:p'2}
\Coeff_{\tau+\nu}(G(p'))\in B_{<\varepsilon_h}\otimes\C.
\end{equation}
In particular if $\varepsilon_h=0$ then $\Coeff_{\tau+\nu}(G(p'))=0$ and $\VAL_{M_\KK}(G(p'))>\tau+\nu$.
\end{enumerate}
\end{prop}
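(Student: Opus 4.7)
The plan is to expand $G(p)$ as a generalised power series in $t$ and to analyse its low-valuation coefficients. Since $p$ has leading term $p_0=e^{\dcoeff}t^{\crit}$, we may write $\gamma_i p^{v_i}=t^{\tau+\delta_i}(a_{i,0}+a_{i,1}(t))$ with $a_{i,0}=\Coeff(\gamma_i)e^{\langle v_i,\dcoeff\rangle}\in\R_{>0}$ and $a_{i,1}(t)\in\m$, so that
\[
G(p)=\sum_i t^{\tau+\delta_i}(a_{i,0}+a_{i,1}(t))\,v_i,
\]
where $\delta_i$ abbreviates $\delta_i(\crit)$.

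For Part~(1), I would group the summands by $\delta_i\in\mathcal E=\{\varepsilon_0,\dotsc,\varepsilon_m\}$. The critical coefficient conditions \eqref{e:coeffcritcond} for $\dcoeff$ give $\sum_{i:\delta_i=\varepsilon_j}a_{i,0}v_i\in B_{<\varepsilon_j}$, while any higher-order contribution $a_{i,1}(t)v_i$ with $\delta_i<\varepsilon_j$ has $v_i\in B_{<\varepsilon_j}$. Consequently $\Coeff_{\tau+\varepsilon_j}(G(p))\in B_{<\varepsilon_j}$ for every $\varepsilon_j\in\mathcal E$, while for $\tau+\varepsilon_j<\mu<\tau+\varepsilon_{j+1}$ only indices with $\delta_i\le\varepsilon_j$ can contribute, hence $\Coeff_\mu(G(p))\in B_{\le\varepsilon_j}$. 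Applied at $\mu=\tau+\nu$, whether $\nu\in\mathcal E$ or not, the minimality of $\varepsilon_h$ forces $\varepsilon_h<\nu$; since $\varepsilon_h\ge\varepsilon_0=0$, we conclude $\nu>0$.

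For Part~(2), I would set $p':=p\exp_T(t^{\nu-\varepsilon_h}u')$ for an unknown $u'\in B_{<\varepsilon_h}^{\perp}\otimes\C$. The first-order Taylor expansion of $\exp_T$ gives
\[
G(p')=G(p)+t^{\nu-\varepsilon_h}\sum_i\langle v_i,u'\rangle\gamma_i p^{v_i}v_i+O(t^{2(\nu-\varepsilon_h)}).
\]
Since $\langle v_i,u'\rangle=0$ for $\delta_i<\varepsilon_h$, the inner sum has $\VALMK\ge\tau+\varepsilon_h$, with leading coefficient at $t^{\tau+\varepsilon_h}$ equal to $\sum_{i:\delta_i=\varepsilon_h}a_{i,0}\langle v_i,u'\rangle v_i$. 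So the target condition $\Coeff_{\tau+\nu}(G(p'))\in B_{<\varepsilon_h}\otimes\C$ reduces, modulo $B_{<\varepsilon_h}\otimes\C$, to the linear equation
\[
\overline{\Coeff_{\tau+\nu}(G(p))}+\sum_{i:\delta_i=\varepsilon_h}a_{i,0}\langle v_i,u'\rangle\,\overline{v_i}=0
\]
in $(B_{\le\varepsilon_h}/B_{<\varepsilon_h})\otimes\C$ for the class of $u'$ in $(B_{<\varepsilon_h}^{\perp}/B_{\le\varepsilon_h}^{\perp})\otimes\C$.

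The key step is to show that this equation has a solution. Under the canonical perfect pairing between $B_{<\varepsilon_h}^{\perp}/B_{\le\varepsilon_h}^{\perp}$ and $B_{\le\varepsilon_h}/B_{<\varepsilon_h}$, the induced linear map $\bar\Phi$ transposes to the symmetric bilinear form $(u,u')\mapsto\sum_{i:\delta_i=\varepsilon_h}a_{i,0}\langle v_i,u\rangle\langle v_i,u'\rangle$, which is precisely the Hessian of $\bar f_{\varepsilon_h,\dcoeff}$ at its critical point $0$. This Hessian is positive definite by the argument underlying Galkin's Lemma~\ref{l:GalkinGen}, using that the classes $\bar v_i$ for $\delta_i=\varepsilon_h$ span $B_{\le\varepsilon_h}/B_{<\varepsilon_h}$ (cf.\ the claim in the proof of Proposition~\ref{p:coefficient}). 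Hence $\bar\Phi$ is a real, and therefore complex, isomorphism, yielding a unique class solving the equation; any lift $u'\in B_{<\varepsilon_h}^{\perp}\otimes\C$ is the required perturbation. The bound $\VALMK(G(p'))\ge\tau+\nu$ is then immediate, and when $\varepsilon_h=0$ we have $B_{<0}=\{0\}$, so $\Coeff_{\tau+\nu}(G(p'))=0$ and $\VALMK(G(p'))>\tau+\nu$. The main obstacle is the clean identification of $\bar\Phi$ with the Hessian through the canonical duality between the two quotient spaces, together with the invocation of positive definiteness to upgrade this to nondegeneracy over $\C$.
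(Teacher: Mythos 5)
Your proof is correct and takes essentially the same route as the paper: part (1) by grouping the $t$-expansion of $G(p)$ according to the values $\delta_i$ and using the critical coefficient conditions to place $\Coeff_{\tau+\nu}(G(p))$ in $B_{<\nu}\otimes\C$ (with the $\nu=0$ case excluded by $\Coeff_{\tau+\nu}(G(p))\ne 0$), and part (2) by solving for $u'$ through the map you call $\bar\Phi$, which is exactly the paper's $\phi_h$ made invertible via the positive definite form $\mathbb B_h$ on $B_{<\varepsilon_h}^{\perp}/B_{\le\varepsilon_h}^{\perp}$. The only cosmetic point is that your remainder term is better recorded as having valuation at least $\tau+\varepsilon_h+2(\nu-\varepsilon_h)>\tau+\nu$ (the factor $\gamma_i p^{v_i}$, nonzero only when $\delta_i\ge\varepsilon_h$, supplies the $\tau+\varepsilon_h$), rather than the bare $2(\nu-\varepsilon_h)$, which is how the paper bounds it.
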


\begin{proof} First observe that $\nu\ge 0$. This is because the leading term of $p$ is $e^{\dcoeff}t^{\crit}$, and therefore each individual summand of $G(p)$ has valuation bounded below by $\Trop(W)(\crit)$.  
Note also that $\Coeff_{\tau+\nu}(G(p))$ is non-zero, by the definition of $\nu$. Thus our assumption on $\varepsilon_h$ can be stated as saying that 
\begin{equation}\label{e:eph}
\Coeff_{\tau+\nu}(G(p))\in ( B_{\le\varepsilon_h}\otimes\C )\setminus (B_{<\varepsilon_h}\otimes\C).
\end{equation}

First let us prove (1). It is equivalent to prove that $\nu>0$ and $ \Coeff_{\tau+\nu}(G(p))\in B_{<\nu}\otimes\C$.  If $\nu>\varepsilon_m$ then (1) holds automatically. Let us now suppose that $0\le\nu\le\varepsilon_m$. Thus either $\varepsilon_{j-1}<\nu\le \varepsilon_{j}$ for some $j\in[1,m]$, or $\nu=0$. Observe that 
\[
\Coeff_{\tau+\nu}(G(p)) = \Coeff_{\tau+\nu}(G_{\le\nu}(p))\in B_{\le \nu}\otimes\C.
\]
This is because for $\varepsilon>\nu$ all the summands  $\gamma_ip^{v_i}v_i$ in $G_{\varepsilon}$ have lowest degree $c_i+\langle v_i, \crit\rangle=\tau+\varepsilon>\tau+\nu$, and therefore these $G_\varepsilon$ do not contribute to $\Coeff_{\tau+\nu}(G(p)) $, leaving only $G_{\le \nu}$.  If $\nu\notin\mathcal E$ then we have that $B_{\le\nu}=B_{<\nu}$ and thus (1) follows immediately. We are left with the case where $\nu=\varepsilon_j$ for $j\in[0,m]$. In this case we have
\[
G_{\le \varepsilon_j}(p)=G_{<\varepsilon_j}(p)+G_{\varepsilon_j}(p),
\]
and since $G_{<\varepsilon_j}(p)\in B_{<\varepsilon_j}\otimes M_\KK$, by its definition, it remains to show that the coefficient
\[
\Coeff_{\tau+\nu}(G_{\varepsilon_j}(p))
\]
of $G_{\varepsilon_j}(p)$ lies in $B_{<\varepsilon_j}\otimes\C$.
The valuation of each summand $\gamma_i p^{v_i}v_i$  of $G_{\varepsilon_j}(p)$ is given by $\tau+\varepsilon_j=\tau+\nu$, and the corresponding lowest order term  is $\Coeff(\gamma_i)e^{\langle v_i,\dcoeff\rangle}v_i$, determined by the leading term of $p$. Therefore we have that
\[
\Coeff_{\tau+\nu}(G_{\varepsilon_j}(p))=\sum_{\{i\mid\delta_i=\nu\}}\Coeff(\gamma_i)e^{\langle v_i,\dcoeff\rangle}v_i.
\]
Now the critical coefficient condition \eqref{e:coeffcritcond}, satisfied by $\dcoeff$, implies that the above sum lies in $B_{<\varepsilon_j}=B_{<\nu}$. If $\nu>0$ we are therefore done. To rule out the case $\nu=0$ note that the critical coefficient condition implies in that case that $\Coeff_{\tau}(G_{0}(p))=0$, but since 
\[
\Coeff_{\tau}(G_{0}(p))=\Coeff_{\tau}(G_{\le 0}(p))=\Coeff_{\tau}(G(p))=\Coeff_{\tau+\nu}(G(p))\ne 0
\]
with the inequality following from the definition of $\nu$, we obtain a contradiction.  Thus we must have that $\nu>0$ and 
the proof of part (1) of the proposition is complete. 

We now turn to (2). Let $\varepsilon_h$ be as defined in part (1).  In order to define $u'$ we introduce a linear map $\phi_h: B_{<\varepsilon_h}^\perp/B_{\le\varepsilon_h}^\perp\to B_{\le\varepsilon_h}/B_{<\varepsilon_h}$ by
\[
\phi_h(\bar u):=\sum_{\{i\mid \delta_i=\varepsilon_h\}} \Coeff(\gamma_{i}) e^{\langle v_i,\dcoeff\rangle}\langle v_i, u\rangle  v_i \mod B_{<\varepsilon_h},
\]
where $\bar u$ denotes the coset in $ B_{<\varepsilon_h}^\perp/B_{\le\varepsilon_h}^\perp$ represented by $u$. Then $\phi_h$ is related to the symmetric bilinear form on $B_{<\varepsilon_h}^\perp/B_{\le\varepsilon_h}^\perp$,
\begin{equation}\label{e:Bh}
\mathbb B_h(\bar u_1,\bar u_2):= \sum_{\{i\mid \delta_i=\varepsilon_h\}} \Coeff(\gamma_{i}) e^{\langle v_i,\dcoeff\rangle}\langle v_i, u_1\rangle \langle v_i, u_2\rangle,
\end{equation}
by the equality
\[
\mathbb B_h(\bar u_1,\bar u_2)=\langle\phi_h(\bar u_1),\bar u_2\rangle.
\]
Note that $\mathbb B_h$ is positive definite and hence non-degenerate. This implies that $\phi_h$ is invertible. We  extend coefficients by tensoring with $\C$, but keep the notation $\phi_h$ for the resulting isomorphism of $\C$-vector spaces.

Recall that we have $\Coeff_{\tau+\nu}(G(p))\in B_{\le\varepsilon_h}\otimes\C$. 
Using the invertibility of $\phi_h$ we may choose a  $u'\in B_{<\varepsilon_h}^\perp\otimes \C$ so that 
\[
\phi_h(\bar u')=-\Coeff_{\tau+\nu}(G(p)) \mod B_{<\varepsilon_h}\otimes\C.
\]
By its definition, $u'$ has the property
\begin{equation}\label{e:ukj}
\Coeff_{\tau+\nu}(G(p))+\sum_{\{i\mid \delta_i=\varepsilon_h\}} \Coeff(\gamma_{i}) e^{\langle v_i,\dcoeff\rangle}\langle v_i, u'\rangle  v_i  \in B_{<\varepsilon_h}\otimes\C.
\end{equation}
 We set $p'=p\exp_T(t^{\nu-\varepsilon_h}u')$ for this $u'$. Note that the element  $t^{\nu-\varepsilon_h}u' $ of  $N_\KK$ now lies in $B_{<\varepsilon_h}^\perp\otimes \mathfrak m$, so that in particular $\exp_T(t^{\nu-\varepsilon_h}u')$ is well-defined. It remains to show that $p'$ satisfies both \eqref{e:p'1} and \eqref{e:p'2}.
Thus to finish the proof we need to analyse $G(p')$. 

We start by noting that $G(p')=\sum_{\varepsilon\in\mathcal E}G_\varepsilon(p')$ and 
\[
G_{\varepsilon}(p')=\sum_{\{i\mid\delta_i=\varepsilon\}}\gamma_i [p']^{v_i}v_i=\sum_{\{i\mid\delta_i=\varepsilon\}}\gamma_i p^{v_i}\exp(t^{\nu-\varepsilon_h}\langle v_i,u'\rangle) v_i,
\]
where the valuation of each summand is equal to $\tau+\varepsilon$. Simultaneously beginning to expand out the exponentials we also have that
\[
G_{\varepsilon}(p')=G_\varepsilon(p)+ t^{\nu-\varepsilon_h}\sum_{\{i\mid\delta_i=\varepsilon\}}\langle v_i,u'\rangle \left(\gamma_i p^{v_i}v_i  + t^{\tau+\varepsilon}\ N_{\mathfrak m} \right).
\]
If $\varepsilon>\varepsilon_h$, then each term in the second sum has valuation greater than  $\tau+\nu$.  Summing over all the $\varepsilon\in\mathcal E$ we may therefore write
\begin{equation*}
G(p')=G(p)+ t^{\nu-\varepsilon_h}\sum_{\{i\mid\delta_i\le \varepsilon_h\}}\langle v_i,u'\rangle\left(\gamma_i p^{v_i} v_i +  t^{\tau+\delta_i}\ N_{\mathfrak m}\right ) +   t^{\tau+\nu}\ N_{\mathfrak m}.
\end{equation*}
 Using that $u'\in B_{<\varepsilon_h}^\perp\otimes\C$, we can rewrite the middle term,
\begin{multline*}
t^{\nu-\varepsilon_h}\sum_{\{i\mid\delta_i\le \varepsilon_h\}}\langle v_i,u'\rangle\left(\gamma_i p^{v_i} v_i +  t^{\tau+\delta_i}\ N_{\mathfrak m}\right )\\
=t^{\nu-\varepsilon_h}\sum_{\{i\mid\delta_i= \varepsilon_h\}}\langle v_i,u'\rangle\left(\gamma_i p^{v_i} v_i +  t^{\tau+\varepsilon_h}\ N_{\mathfrak m}\right )\\
=t^{\nu-\varepsilon_h}\left(\sum_{\{i\mid\delta_i= \varepsilon_h\}}\langle v_i,u'\rangle\gamma_i p^{v_i} v_i\right ) +  t^{\tau+\nu} N_{\mathfrak m}.
\end{multline*}
Therefore all in all we have that 
\begin{equation}\label{e:G(p')}
G(p')=G(p)+ t^{\nu-\varepsilon_h}\left(\sum_{\{i\mid\delta_i= \varepsilon_h\}}\gamma_i p^{v_i}\langle v_i,u'\rangle v_i \right )+ t^{\tau+\nu} N_{\mathfrak m}.
\end{equation}
Recall that the first summand, namely $G(p)$, has valuation $\tau+\nu$. Each summand from the middle term also has valuation $\tau+\nu$. The remaining expression has valuation greater than $\tau+\nu$. 
It follows that $\VAL_M(G(p'))\ge \tau+\nu$. Thus \eqref{e:p'1} is proved.

Now for \eqref{e:p'2} we work out the vector coefficient $\Coeff_{\tau+\nu}(G(p'))$ explicitly. Note that
\begin{multline*}
\sum_{\{i\mid\delta_i= \varepsilon_h\}}\gamma_i p^{v_i}\langle v_i,u'\rangle v_i =t^{\tau+\varepsilon_h}
\left(\sum_{\{i\mid\delta_i=\varepsilon_h\}}
\Coeff(\gamma_i) e^{\langle v_i,\dcoeff\rangle}\langle v_i,u'\rangle v_i \right) \\ + t^{\tau+\varepsilon_h}N_{\mathfrak m}.
\end{multline*}
This expansion combined with  \eqref{e:G(p')} implies that   
\begin{equation*}
\Coeff_{\tau+\nu}(G(p'))
=\Coeff_{\tau+\nu}\left(
G(p)\right)+ \sum_{\{i\mid\delta_i= \varepsilon_h\}}\Coeff(\gamma_i) e^{\langle v_i,\dcoeff\rangle}\langle v_i,u'\rangle v_i.
\end{equation*}
Finally, comparing with \eqref{e:ukj} we deduce that $\Coeff_{\tau+\nu}(G(p'))$ lies in $B_{<\varepsilon_h}\otimes\C$ and \eqref{e:p'2} is proved. 
\end{proof}

 Our next step will be to recursively construct a sequence $(p_j)_j$ of elements of $T(\KK_{>0})$ of the form $p_{j}:=e^{\dcoeff}t^{\crit}\exp_T(w_j)$, starting with  $p_{0}:=e^{\dcoeff}t^{\crit}$. 

\vskip .2cm 

\begin{recursiveconstruction}\rm
Let $w_0=0$. Suppose $w_{k}\in N_{\mathfrak m}$ and hence $p_k=e^{\dcoeff}t^{\crit}\exp_T(w_k)$ in $T(\KK_{>0})$ has been defined. 
Recall Proposition~\ref{p:recursion} with all of its notations. We use this proposition to construct $w_{k+1}$ and $p_{k+1}$ as follows.

\begin{itemize}
\item If $G(p_{k})=0$ we set $w_{k+1}=w_k$, so that $p_{k+1}=p_k$.  In this case the sequence $(p_j)_j$ is constant for $j\ge k$.
\item If $G(p_k)\ne 0$, set
\begin{equation}\label{e:nuk}
 \nu_{k}:=\VAL_{M_{\KK}}(G(p_{k}))-\tau.
 \end{equation}
Then from Proposition~\ref{p:recursion} with $p=p_k$ and $\nu=\nu_k$ we have an associated $\varepsilon_{h(k)}:=\varepsilon_h\in\mathcal E$ with $\varepsilon_{h(k)}<\nu_k$ and a $u'_k:=u'\in B_{<\varepsilon_{h(k)}}^\perp\otimes\C$. We define $w_{k+1} \in N_{\mathfrak m}$ by 
\[
w_{k+1}:=w_k+t^{\nu_k-\varepsilon_{h(k)}} u'_k,
\]
and set $p_{k+1}:=e^{\dcoeff}t^{\crit}\exp_T(w_{k+1})$.
\end{itemize}
 By its definition, $\varepsilon_{h(k)}$ is the minimal element of $\mathcal E$ for which 
\[
\Coeff_{\tau+\nu_k}(G(p_{k}))\in B_{\le\varepsilon_{h(k)}}\otimes\C,
\]
while $u'_k$ is constructed so that $p_{k+1}$ satisfies  $\nu_{k+1}:=\VAL_{M_{\KK}}(G(p_{k+1}))-\tau\ge \nu_k$ and 
\begin{equation}\label{e:Coeffnew}
\Coeff_{\tau+\nu_k}(G(p_{k+1}))\in B_{<\varepsilon_{h(k)}}\otimes\C.
\end{equation}
The proposition thus implies that we have either 
\begin{enumerate}
\item $\nu_{k+1}=\nu_k$ and $\varepsilon_{h(k+1)}<\varepsilon_{h(k)}$, in the case that the coefficient \eqref{e:Coeffnew} is nonzero, 
\item or we have that  $\Coeff_{\tau+\nu_k}(G(p_{k+1}))=0$,   and therefore $\nu_{k+1}>\nu_k$. 
\end{enumerate}
Note that if $\varepsilon_{h(k)}=0$ then $B_{<\varepsilon_{h(k)}}=0$ and we are necessarily in the second case. 
\end{recursiveconstruction}
\begin{remark}\label{r:almostmonotone}
The points (1) and (2) above imply that not only is the sequence $0<\nu_0\le\nu_1\le \nu_2\le \dotsc$ monotonely increasing, also for each value $\nu\in\R_{> 0}$ the number of $j$ with $\nu_j=\nu$ is bounded above by the cardinality of $\mathcal E$ and in particular is finite. 
\end{remark}

\begin{remark}\label{r:nonuniqueness}
We are suppressing in our notation for the element $p_k$ its dependence on the choices of elements $u'_{0},\dotsc, u'_{k-1}$. These elements were not uniquely determined. Indeed, given $u'_{0},\dotsc, u'_{j-1}$ the next $u'_{j}\in B_{<\varepsilon_{h(j)}}^\perp\otimes\C$ is unique precisely up to $B_{\le\varepsilon_{h(j)}}^\perp\otimes \C$, as follows from the proof of Proposition~\ref{p:recursion}. 
\end{remark}

\begin{lemma} \label{l:convergence} Let $p_k\in T(\KK_{>0})$ and $w_k=\sum_{j=0}^{k-1} t^{\nu_j-\varepsilon_{h(j)}}u'_j\in N_{\mathfrak m}$ be elements constructed as above, where we assume that $G(p_k)\ne 0$ for all $k$. Then for every $\delta\ge 0$ the set $S_\delta=\{j\in\Z_{\ge 0}\mid \nu_j-\varepsilon_{h(j)}=\delta \}$ is finite. Moreover the set $\mathcal D=\{\nu_k-\varepsilon_{h(k)}\mid k\in\Z_{\ge 0}\}$  lies in $\MonSeq$. Note that $\mathcal D$ is also the set of all $ \delta$ such that $ S_{\delta}\ne 0$.   We have that 
\begin{equation}\label{e:winfty}
w_\infty:=\sum_{\delta\in\mathcal D}t^\delta \left(\sum_{j\in S_{\delta}} u'_j\right)
\end{equation}
is a well-defined element of $N_{\mathfrak m}$ and is the limit of the sequence $(w_k)_k$. 
\end{lemma}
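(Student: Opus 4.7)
My plan is to establish the three assertions of the lemma in sequence: first the finiteness of each $S_\delta$, then $\mathcal D \in \MonSeq$, and finally the well-definedness and convergence $w_k \to w_\infty$. The first and third parts are essentially bookkeeping once the middle assertion has been established; the middle is the technical heart.

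For the finiteness of $S_\delta$ I would argue by direct counting using Remark~\ref{r:almostmonotone}. If $j \in S_\delta$ then $\nu_j = \delta + \varepsilon_{h(j)}$, so $\nu_j$ takes one of at most $|\mathcal E|$ values (one for each $\varepsilon_{h(j)} \in \mathcal E$). For each such value, Remark~\ref{r:almostmonotone} bounds the number of indices $j$ realizing it by $|\mathcal E|$, hence $|S_\delta| \le |\mathcal E|^2$.

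For $\mathcal D \in \MonSeq$ I plan to proceed in two substeps. The first is to track the $t$-supports through the iteration: setting $\tilde A_i := A_i - c_i \subseteq \R_{\ge 0}$ for $A_i \in \MonSeq$ the support of $\gamma_i$, and $\tilde A := \bigcup_i \tilde A_i \in \MonSeq$, a straightforward induction on $j$ (expanding $\exp_T(w_j)$ using $w_j = \sum_{k<j} t^{\delta_k} u'_k$ and multiplying out) shows that the $t$-support of $\gamma_i\,p_j^{v_i}$ lies in $c_i + \langle v_i, \crit\rangle + \tilde A + \Z_{\ge 0}\text{-span}\{\delta_0,\ldots,\delta_{j-1}\}$. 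Consequently $\tau + \nu_j$ lies in $\tau + \mathcal E + \tilde A + \Z_{\ge 0}\text{-span}\{\delta_0,\ldots,\delta_{j-1}\}$ and $\delta_j \in (\mathcal E - \mathcal E) + \tilde A + \Z_{\ge 0}\text{-span}\{\delta_0,\ldots,\delta_{j-1}\}$. The second substep is to exhibit a fixed $\MonSeq$ set $\mathcal S \subseteq \R_{>0}$ containing every $\delta_j$, which then yields $\mathcal D \subseteq \mathcal S$ and hence $\mathcal D \in \MonSeq$. Here I expect the main obstacle: the finite set $\mathcal E - \mathcal E$ contains negative values, so the naive candidate (the $\Z_{\ge 0}$-semigroup generated by $\tilde A \cup (\mathcal E - \mathcal E)$) escapes $\MonSeq$ and in general fails to be well-ordered even after intersecting with $\R_{>0}$. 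To handle this I would exploit the strict positivity $\delta_j > 0$ from Proposition~\ref{p:recursion}(1) together with the chain $0 = \varepsilon_0 < \varepsilon_1 < \cdots < \varepsilon_m$, arguing by an auxiliary induction on $h(j)$ that each $\delta_j$ in fact lies in the $\MonSeq$ set $\Z_{\ge 0}\text{-span}(\tilde A \cup \mathcal E) - \mathcal E \subseteq [-\varepsilon_m, \infty)$, with the single $\mathcal E$-shift being bookkept against the leading $\varepsilon$-debt carried by each $\delta_k$.

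With (a) and (b) in hand, the third assertion is immediate. Each inner sum $\sum_{j \in S_\delta} u'_j$ is a finite sum of elements of $N_\C$, and $\mathcal D$ lies in $\MonSeq$ and in $\R_{>0}$, so the formula defines $w_\infty$ as a well-defined element of $N_\m$. For Cauchy convergence, $w_\infty - w_k = \sum_{j \ge k} t^{\delta_j} u'_j$ has components with valuations bounded below by $\min_{j \ge k} \delta_j$. Since $\{j : \delta_j \le M\} = \bigsqcup_{\delta \in \mathcal D \cap [0, M]} S_\delta$ is a finite union of finite sets, $\min_{j \ge k} \delta_j \to \infty$ as $k \to \infty$, giving convergence in the $t$-adically complete module $N_\m$.
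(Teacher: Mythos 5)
Your treatment of the finiteness of $S_\delta$ and of the final convergence step matches the paper's, and your support-tracking computation (that $\tau+\nu_k$ lies in $\tau+\mathcal E+\tilde A+\Z_{\ge 0}\text{-span}\{\delta_0,\dotsc,\delta_{k-1}\}$) is exactly the expansion the paper performs. The problem is the step you yourself flag as the obstacle: after subtracting $\varepsilon_{h(k)}$ you only know $\delta_k\in(\mathcal E-\mathcal E)+\tilde A+\Z_{\ge 0}\text{-span}\{\delta_j\}$, and your proposed repair --- an auxiliary induction placing each $\delta_j$ in $\Z_{\ge 0}\text{-span}(\tilde A\cup\mathcal E)-\mathcal E$ with ``a single $\mathcal E$-shift bookkept against the leading $\varepsilon$-debt'' --- does not close. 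In the induction step the exponent $\tau+\nu_k$ involves $\Z_{\ge 0}$-combinations $\sum_j n_j\delta_j$ of the earlier increments, so if each $\delta_j$ carries one $-\varepsilon$ debt, the combination carries $\sum_j n_j$ debts; the ``single shift'' property is not stable under the recursion. Nor does strict positivity $\delta_j>0$ rescue you: positive elements of a semigroup generated by mixed-sign reals (e.g. $1$ and $-\sqrt2$) can be dense, so positivity alone does not yield membership in $\MonSeq$.

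The missing idea is to use the \emph{defining property} of $\varepsilon_{h(k)}$, namely that $\Coeff_{\tau+\nu_k}(G(p_k))\notin B_{<\varepsilon_{h(k)}}\otimes\C$. Since every summand $\gamma_i p_k^{v_i}v_i$ with $\delta_i<\varepsilon_{h(k)}$ contributes only vectors in $B_{<\varepsilon_{h(k)}}\otimes\C$, the monomial $t^{\tau+\nu_k}$ must occur with nonzero coefficient in some summand with $\delta_i=\varepsilon\ge\varepsilon_{h(k)}$. Hence
\[
\nu_k-\varepsilon_{h(k)}=(\varepsilon-\varepsilon_{h(k)})+\delta+\sum_{j<k}\Z_{\ge 0}\,(\nu_j-\varepsilon_{h(j)}),
\]
with $\delta\in\Gamma_i$ (the support of $\tilde\gamma_i$) and $\varepsilon-\varepsilon_{h(k)}$ lying in the \emph{finite set of nonnegative differences} $\partial\mathcal E=\{\varepsilon'-\varepsilon\mid\varepsilon,\varepsilon'\in\mathcal E,\ \varepsilon'\ge\varepsilon\}$. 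So no debt is ever incurred: by induction every $\delta_k$ lies in the additive semigroup $\mathcal S=\langle\Gamma_1,\dotsc,\Gamma_n,\partial\mathcal E\rangle_{\Z_{\ge 0}}\subset\R_{\ge 0}$, which lies in $\MonSeq$ because each $\Gamma_i$ does and $\partial\mathcal E$ is finite. With $\mathcal D\subset\mathcal S$ established this way, the rest of your argument goes through as written.
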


\begin{proof} The assertion that $S_{\delta}$ is finite follows from Remark~\ref{r:almostmonotone} together with the fact that the $\varepsilon_{h(j)}$ lie in the finite set $\mathcal E$. 
We now prove that $\mathcal D\in\MonSeq$. The strategy of the proof will be to construct a set $\mathcal S\in\MonSeq$ which has the property that $\mathcal D\subset\mathcal S$.

Let $\Gamma_i:=\{\delta\in\R_{\ge 0}\mid \Coeff_{c_i +\delta}(\gamma_i)\ne 0\}$, so that
\[
\gamma_i=t^{c_i}\tilde\gamma_i=t^{c_i}\left(\sum_{\delta\in\Gamma_i}m_\delta t^\delta\right),
\]
and let $\partial\mathcal E:=\{\varepsilon'-\varepsilon\mid  \varepsilon,\varepsilon'\in \mathcal E, \varepsilon'\ge \varepsilon\}$.   Clearly $\Gamma_i\in\MonSeq$, since $\gamma_i\in\KK$. We define $\mathcal S$ 
to be the additive semigroup in $\R_{\ge 0}$ generated by the sets $\Gamma_i$ and $\partial\mathcal E$,
\[
\mathcal S:=\langle\Gamma_1,\dotsc,  \Gamma_n,\partial\mathcal E \rangle_{\Z_{\ge 0}}.
\]
 Since each $\Gamma_i\in\MonSeq$, and $\partial\mathcal E$ is finite, it follows that also $\mathcal S\in\MonSeq$. We now prove the following claim.
\vskip .2cm

\begin{paragraph}{\it Claim} $\mathcal D$ is a subset of $\mathcal S$, and therefore $\mathcal D$ lies in $\MonSeq$.
\end{paragraph}
\vskip .2cm

\begin{paragraph}{\it Proof of Claim} We need to show that for any $k\in\Z_{\ge 0}$ we have that $\nu_k-\varepsilon_{h(k)}$ lies in $\mathcal S$.
 Recall that $\mathcal\varepsilon_{h(k)}\in\mathcal E$ is characterised by the property that 
\begin{equation}\label{e:ephj}
\Coeff_{\tau+\nu_k}(G(p_k)) \in (B_{\le \varepsilon_{h(k)}}\otimes\C)\setminus (B_{< \varepsilon_{h(k)}}\otimes\C).
\end{equation}
 By definition, $\tau+\nu_k$ is the valuation of  
\begin{multline*}
G(p_k)=\sum_{i=1}^n \gamma_ip_j^{v_i}v_i=\sum_{\varepsilon\in\mathcal E}\sum_{\{i\mid\delta_i=\varepsilon\}}\gamma_ip_k^{v_i}v_i
\\=\sum_{\varepsilon\in\mathcal E}\sum_{\{i\mid\delta_i=\varepsilon\}}e^{\langle v_i,\dcoeff\rangle}t^{\tau+\varepsilon}\tilde\gamma_i \exp_T(\langle v_i,w_k\rangle)v_i
\\= \sum_{\varepsilon\in\mathcal E}\sum_{\{i\mid\delta_i=\varepsilon\}}e^{\langle v_i,\dcoeff\rangle}t^{\tau+\varepsilon}\tilde\gamma_i \exp_T\left(\sum_{j=0}^{k-1
} t^{\nu_j-\varepsilon_{h(j)}}\langle v_i,u'_j\rangle\right)v_i.
\end{multline*}
Note that $\Coeff_{\tau+\nu_k}(G(p_k)) \notin B_{< \varepsilon_{h(k)}}\otimes\C$ by \eqref{e:ephj}, therefore $t^{\tau+\nu_k}$ must appear with non-zero coefficient in  some $\delta_i=\varepsilon$ summand, where $\varepsilon\ge\varepsilon_{h(k)}$. Thus $\tau+\nu_k$ must be of the form
\[ \tau+\nu_k=\tau+\varepsilon+\delta + \sum_{j=0}^{k-1} \Z_{\ge 0}(\nu_j-\varepsilon_{h(j)}),
\] for some $\delta\in \Gamma$ and $\varepsilon\ge \varepsilon_{h(k)}$.  It follows that 
\begin{equation}\label{e:indclaim}
\nu_k-\varepsilon_{h(k)}=(\varepsilon-\varepsilon_{h(k)})+\delta + \sum_{j=0}^{k-1} \Z_{\ge 0}(\nu_j-\varepsilon_{h(j)}).
\end{equation}
If $k=0$ we have that $\nu_0-\varepsilon_{h(0)}=(\varepsilon-\varepsilon_{h(0)})+\delta $, which clearly lies in $\mathcal S$.  The claim for general $k$ now follows by induction. Namely suppose the claim is true for all $j\le k-1$. Then  all the terms in the right hand side of \eqref{e:indclaim} lie in $\mathcal S$, and therefore $\nu_k-\varepsilon_{h(k)}$ lies in $\mathcal S$, as required.
 \end{paragraph}
\vskip .2cm

Finally, note that since $\mathcal D\in\MonSeq$, we have for any $R>0$ an index $k=k(R)$ such that  $\nu_{\ell}-\varepsilon_{h(\ell)}>R$ for all $\ell\ge k$. Therefore, for this index $k$,
\[
w_\infty-w_k=\sum_{\ell=k}^{\infty}t^{\nu_\ell-\varepsilon_{h(\ell)}}u'_{\ell}\ \in\  t^R N_{\mathfrak m},
\] 
implying that $w_\infty-w_k$ tends to $0$ in the $t$-adic topology. Thus we have that $w_{\infty}=\lim_{k\to\infty}w_k$. 
\end{proof}

\begin{remark}\label{r:nuiinfty}
We note that this lemma implies, and indeed is equivalent to, the statement that if $G(p_k)\ne 0$ for all $k$, then the sequence $(\nu_k)_k$ from the Recursive Construction tends to infinity.
\end{remark} 
\begin{defn}\label{d:pinfty} If $p_k=e^{\dcoeff}t^{\crit}\exp_T(w_k)$ is a sequence of elements of $T(\KK_{>0})$ constructed as in the Recursive Construction above, then either $G(p_k)=0$ for some $k$ and we set $w_\infty=w_k$, or we set $w_\infty=\lim_{k\to\infty}w_k$ using Lemma~\ref{l:convergence}. We define
\[
p_{\infty}:=e^{\dcoeff}t^{\crit}\exp_T(w_\infty).
\]
The element $p_{\infty}$ of $T(\KK_{>0})$ is the limit of the sequence $(p_j)_j$. 
\end{defn}

\begin{cor}
If $(p_k)_k$ with $p_k=e^{\dcoeff}t^{\crit}\exp_T(w_k)$ is a sequence of elements of $T(\KK_{>0})$ constructed as in the Recursive Construction above, then its limit, $p_\infty$, is a positive critical point of $W$. 
\end{cor}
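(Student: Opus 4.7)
The plan is to show that $G(p_\infty)=0$, which is the criterion for $p_\infty$ to be a critical point of $W$. Since $p_\infty$ is by construction already in $T(\KK_{>0})$, this will establish that $p_\infty$ is a positive critical point.

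First I would dispose of the easy case: if $G(p_k)=0$ for some finite $k$, then by the Recursive Construction the sequence stabilises, $w_\infty=w_k$, so $p_\infty=p_k$ and $G(p_\infty)=0$ trivially.

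In the remaining case we have $G(p_k)\ne 0$ for all $k$, and by Remark~\ref{r:nuiinfty} the sequence $\nu_k=\VAL_{M_\KK}(G(p_k))-\tau$ tends to $+\infty$. The key observation is then two-fold. On the one hand, $\VAL_{M_\KK}(G(p_k))=\tau+\nu_k\to\infty$, which by the definition of the $t$-adic topology means $G(p_k)\to 0$ in $M_\KK$. On the other hand, $p_k\to p_\infty$ in $T(\KK_{>0})$ by Definition~\ref{d:pinfty} (equivalently Lemma~\ref{l:convergence}), and $G$, being a finite $\KK$-linear combination of monomials $x^{v_i}$ multiplied by fixed vectors $v_i\in M_\R$, is continuous for the $t$-adic topology. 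Hence $G(p_k)\to G(p_\infty)$. Combining the two conclusions gives $G(p_\infty)=0$, so $p_\infty$ is indeed a critical point, and by construction $p_\infty\in T(\KK_{>0})$, so it is a positive critical point of $W$.

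I do not expect any serious obstacle, since all the difficult analytic content (the careful choice of $u'_k$ ensuring $\nu_k\to\infty$, and the verification that the exponents $\nu_k-\varepsilon_{h(k)}$ form a set in $\MonSeq$ so that $w_\infty$ makes sense as an element of $N_{\mathfrak m}$) is already packaged into Proposition~\ref{p:recursion}, Lemma~\ref{l:convergence}, and Remark~\ref{r:nuiinfty}. The only mildly non-trivial point to verify here is the continuity of $G$, which reduces to the continuity of each monomial map $x\mapsto \gamma_i x^{v_i}$ from $T(\KK_{>0})$ to $\KK$ under the $t$-adic norm; this is standard for the generalised Puiseux series field $\KK$, which is complete in this topology as recorded just after Definition~\ref{d:genPuiseux}.
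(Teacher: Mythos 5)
Your proposal is correct and follows essentially the same route as the paper: handle the stabilising case trivially, then use Remark~\ref{r:nuiinfty} ($\nu_k\to\infty$) together with $p_k\to p_\infty$ to conclude $G(p_\infty)=0$. The paper phrases the final step coefficient-wise (showing $\Coeff_{\tau+\eta}(G(p_\infty))=0$ for every $\eta$) rather than invoking continuity of $G$ explicitly, but this is the same argument in substance.
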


\begin{proof} If $G(p_k)=0$ for some $k$ then $p_\infty=p_k$ and is clearly a critical point of $W$. Otherwise, by Remark~\ref{r:nuiinfty}, the sequence $\nu_k=\VAL_{M_{\KK}}(G(p_k))-\tau$ tends to infinity. For any $\eta\in\R$ we therefore have that  $\eta<\nu_k$ for $k$  large enough (depending on $\eta$). But then $\tau+\eta<\VAL_{M_{\KK}}(G(p_k))$, implying that $\Coeff_{\tau+\eta}(G(p_k))=0$ for large enough $k$. Therefore $\Coeff_{\tau+\eta}(G(p_\infty))=0$, for all $\eta$ and $G(p_{\infty})=0$, implying that $p_{\infty}$ is a critical point of $W$.    
\end{proof}

We now prove an auxiliary lemma which will be used in the proof of uniqueness. 

\begin{lemma}\label{l:wvanishing}
Suppose $w\in N_\C$ has the property that 
\begin{equation}\label{e:Sep2}
\sum_{\{i\mid \delta_i=\varepsilon\}}\Coeff(\gamma_i)e^{\langle v_i,\dcoeff\rangle}\langle v_i,w\rangle v_i\in B_{<\varepsilon}
\end{equation}
for all $\varepsilon\ge 0$. Then $w=0$.
\end{lemma}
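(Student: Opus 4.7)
The plan is to induct on $j \in \{0, 1, \ldots, m\}$, showing at each stage that $w$ lies in $B_{\le \varepsilon_j}^\perp \otimes \C$, i.e.\ that $\langle v_i, w\rangle = 0$ for every $i$ with $\delta_i \le \varepsilon_j$. Since the $v_i$ span $M_\R$ by completeness of $W$, we have $B_{\le \varepsilon_m} = M_\R$, so the $j = m$ case yields $w \in M_\R^\perp \otimes \C = \{0\}$, as desired. The crucial tool is the positive-definite symmetric bilinear form $\mathbb B_j$ from \eqref{e:Bh}, defined on the quotient $B_{<\varepsilon_j}^\perp / B_{\le \varepsilon_j}^\perp$; its positive-definiteness was already recorded in the proof of Proposition~\ref{p:coefficient} and follows at once from $\Coeff(\gamma_i) e^{\langle v_i, \dcoeff\rangle} > 0$.

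For the base case $j = 0$, the hypothesis \eqref{e:Sep2} at $\varepsilon = 0$ forces the sum on the left-hand side to lie in $B_{<0} \otimes \C = \{0\}$, so it vanishes in $M_\C$. Pairing this vanishing vector with an arbitrary $u \in N_\R = B_{<0}^\perp$ yields
\[
\sum_{\{i\mid \delta_i = 0\}} \Coeff(\gamma_i) e^{\langle v_i, \dcoeff\rangle} \langle v_i, w\rangle \langle v_i, u\rangle = 0,
\]
which is exactly $\mathbb B_0(w, u) = 0$, with $\mathbb B_0$ understood as extended $\C$-bilinearly in its first slot. Writing $w = w_1 + i w_2$ with $w_1, w_2 \in N_\R$ and separating real and imaginary parts gives $\mathbb B_0(w_1, u) = \mathbb B_0(w_2, u) = 0$ for all $u \in N_\R$, and positive-definiteness of $\mathbb B_0$ on the quotient $N_\R / B_{\le 0}^\perp$ forces $w_1, w_2 \in B_{\le 0}^\perp$, whence $w \in B_{\le 0}^\perp \otimes \C$.

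The inductive step is essentially identical. Assume $w \in B_{\le \varepsilon_{j-1}}^\perp \otimes \C = B_{<\varepsilon_j}^\perp \otimes \C$, and pair the hypothesis \eqref{e:Sep2} at $\varepsilon = \varepsilon_j$ with an arbitrary $u \in B_{<\varepsilon_j}^\perp$. The right-hand side lies in $B_{<\varepsilon_j} \otimes \C$, which $u$ annihilates, so we again obtain $\mathbb B_j(w, u) = 0$ for every $u \in B_{<\varepsilon_j}^\perp$. Splitting $w$ into real and imaginary parts and using the positive-definiteness of $\mathbb B_j$ on $B_{<\varepsilon_j}^\perp / B_{\le \varepsilon_j}^\perp$ upgrades the membership to $w \in B_{\le \varepsilon_j}^\perp \otimes \C$, closing the induction.

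There is no real obstacle beyond careful bookkeeping of the nested flag $B_{\le \varepsilon_0}^\perp \supseteq B_{\le \varepsilon_1}^\perp \supseteq \cdots \supseteq B_{\le \varepsilon_m}^\perp = \{0\}$. The only mildly delicate point is that each $\mathbb B_j$ is a priori an $\R$-bilinear form while $w$ lives in $N_\C$, so the step from ``$\mathbb B_j(w, u) = 0$ for all real $u$'' to ``$w$ vanishes modulo $B_{\le \varepsilon_j}^\perp$'' must pass through the real/imaginary decomposition of $w$ and invoke positive-definiteness twice.
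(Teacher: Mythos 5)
Your proof is correct and follows essentially the same route as the paper's: an induction along the flag $B_{\le\varepsilon_0}^\perp\supseteq\cdots\supseteq B_{\le\varepsilon_m}^\perp=\{0\}$, gaining one step at a time from the hypothesis at $\varepsilon=\varepsilon_j$. The paper phrases the key step via the isomorphism $\phi_h:B_{<\varepsilon_h}^\perp/B_{\le\varepsilon_h}^\perp\to B_{\le\varepsilon_h}/B_{<\varepsilon_h}$, which is invertible precisely because of the positive-definiteness of $\mathbb B_h$ that you invoke directly (and the paper reduces to real $w$ once at the start rather than splitting real and imaginary parts at each stage), so the two arguments are the same in substance.
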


\begin{proof}
Dividing up $w$ into real and imaginary parts, we may assume that $w\in N_\R$. Recall that we have isomorphisms 
\begin{eqnarray*}
\phi_{h}:B_{< \varepsilon_h}^\perp/B_{\le\varepsilon_h}^{\perp}&\to &B_{\le \varepsilon_h}/B_{<\varepsilon_h}\\
 \omega + B_{\le\varepsilon_{h}}^\perp&\mapsto &\sum_{\{i\mid\delta_i= \varepsilon_h \}}\Coeff(\gamma_i)e^{\langle v_i,\crit\rangle}\langle v_i,\omega\rangle v_i\mod B_{<\varepsilon_h}. 
\end{eqnarray*}
We prove inductively that $w\in B_{\le \varepsilon_h}^\perp$ for all $h$. Since $B_{\le \varepsilon_m}^\perp=\{0\}$ this will imply that $w=0$. 

The start of the induction is given by $w\in B_{\le \varepsilon_0}^\perp=N_\R$. Suppose we have proved that  $w\in B_{\le \varepsilon_{h-1}}^\perp$. Then since $B_{\le \varepsilon_{h-1}}^\perp=B_{< \varepsilon_{h}}^\perp$ we may apply $\phi_h$ to $\bar w=w + B_{\le\varepsilon_{h}}^\perp$.   The condition~\eqref{e:Sep2} implies that 
$\phi_h(\bar w)=0$. But since $\phi_h$ is an isomorphism, this implies that $\bar w=0$, hence that $w \in B_{\le\varepsilon_{h}}^\perp$. This completes the induction.
\end{proof}

\begin{prop}[Uniqueness]\label{p:uniqueness} 
Suppose $p$ and $p'$ are two positive critical points of $W$, then $p=p'$.
\end{prop}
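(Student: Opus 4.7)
The plan is to reduce uniqueness to Lemma~\ref{l:wvanishing}. By Corollary~\ref{c:valcrit} and Proposition~\ref{p:coefficient}, any two positive critical points of $W$ have the same leading term $e^{\dcoeff}t^{\crit}$. Thus $p'p^{-1}$ lies in $T_e(\KK_{>0})$, and I can write $p'=p\,\exp_T(z)$ for a unique $z\in N_{\mathfrak m}$. The goal is to show $z=0$. I argue by contradiction: assume $z\neq 0$, let $\mu\in\R_{>0}$ be the $t$-valuation of $z$, and let $z_\mu\in N_\C\setminus\{0\}$ denote its leading vector-coefficient.

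The starting point is the identity
\[
0=G(p')-G(p)=\sum_{i=1}^n \gamma_i\,p^{v_i}\bigl(\exp(\langle v_i,z\rangle)-1\bigr)\,v_i,
\]
which I would analyze order-by-order in $t$. Using that $\gamma_ip^{v_i}=\Coeff(\gamma_i)\,e^{\langle v_i,\dcoeff\rangle}\,t^{\tau+\delta_i}+O(t^{>\tau+\delta_i})$ and $\exp(\langle v_i,z\rangle)-1=\langle v_i,z_\mu\rangle\,t^{\mu}+O(t^{>\mu})$, the $i$-th summand has $t$-valuation $\tau+\delta_i+\mu$ with leading vector-coefficient $\Coeff(\gamma_i)\,e^{\langle v_i,\dcoeff\rangle}\,\langle v_i,z_\mu\rangle\,v_i$. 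For each $\varepsilon_h\in\mathcal E$, I extract the coefficient of $t^{\tau+\varepsilon_h+\mu}$ on the right-hand side. Summands with $\delta_i>\varepsilon_h$ have $t$-valuation strictly greater than $\tau+\varepsilon_h+\mu$ and therefore contribute zero. Summands with $\delta_i<\varepsilon_h$ are scalar multiples of $v_i\in B_{<\varepsilon_h}$, so whatever they contribute at this $t$-order lies in $B_{<\varepsilon_h}$. Finally, summands with $\delta_i=\varepsilon_h$ have $t$-valuation exactly $\tau+\varepsilon_h+\mu$, so their contribution at this order is just the sum of their leading vector-coefficients, namely
\[
\sum_{\{i\mid \delta_i=\varepsilon_h\}}\Coeff(\gamma_i)\,e^{\langle v_i,\dcoeff\rangle}\,\langle v_i,z_\mu\rangle\,v_i.
\]
Since the full coefficient at order $\tau+\varepsilon_h+\mu$ vanishes, this sum is forced to lie in $B_{<\varepsilon_h}$, which is precisely condition~\eqref{e:Sep2} for $w=z_\mu$ at $\varepsilon=\varepsilon_h$.

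Ranging over all $\varepsilon_h\in\mathcal E$ (and noting that \eqref{e:Sep2} holds trivially whenever $\varepsilon\notin\mathcal E$), I conclude that $z_\mu$ satisfies the hypotheses of Lemma~\ref{l:wvanishing}, which forces $z_\mu=0$ and contradicts the choice of $\mu$. Hence $z=0$ and $p=p'$. The main technical obstacle is the careful bookkeeping at order $\tau+\varepsilon_h+\mu$: one must verify that no additional non-trivial contributions creep in from subleading $t$-terms of $\gamma_i$ or $p^{v_i}$ or from quadratic-and-higher Taylor parts of $\exp(\langle v_i,z\rangle)-1$. The decisive point is that every such correction either sits at strictly higher $t$-valuation than $\tau+\varepsilon_h+\mu$, or, when $\delta_i<\varepsilon_h$, comes multiplied by $v_i$ and is therefore absorbed harmlessly into $B_{<\varepsilon_h}$.
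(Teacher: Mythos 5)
Your proof is correct, and it rests on the same two pillars as the paper's: the fact that any positive critical point has leading term $e^{\dcoeff}t^{\crit}$, and Lemma~\ref{l:wvanishing} applied to a single vector-coefficient extracted from the vanishing of a difference of gradients. The organization, however, is slightly different and in fact a little cleaner. The paper writes $p=p_0\exp_T(w)$, $p'=p_0\exp_T(w')$ with a common exponent set $\mathcal C$ and runs an induction over $\mathcal C$, using the induction hypothesis $w_\delta=w'_\delta$ for $\delta<\sigma$ to cancel all lower-order contributions before reading off the coefficient of $t^{\tau+\sigma+\varepsilon}$ in $G_{\ge\varepsilon}(p)-G_{\ge\varepsilon}(p')$. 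You instead use that $\exp_T$ is a group isomorphism to write $p'=p\exp_T(z)$, and exploit the factorization
\[
G(p')-G(p)=\sum_{i=1}^n \gamma_i\,p^{v_i}\bigl(\exp(\langle v_i,z\rangle)-1\bigr)\,v_i,
\]
in which the factor $\exp(\langle v_i,z\rangle)-1$ automatically has valuation at least $\mu=\ValK$-order of $z$, with coefficient $\langle v_i,z_\mu\rangle$ at order exactly $\mu$. This makes the cancellation of lower-order terms automatic and replaces the induction by a first-discrepancy contradiction at a single order $\tau+\varepsilon_h+\mu$; your trichotomy $\delta_i>\varepsilon_h$ (too high an order), $\delta_i<\varepsilon_h$ (absorbed into $B_{<\varepsilon_h}\otimes\C$), $\delta_i=\varepsilon_h$ (produces exactly the sum in \eqref{e:Sep2} with $w=z_\mu$) is the same decomposition the paper uses, and your handling of the subleading terms of $\gamma_i p^{v_i}$ and of the higher Taylor terms of the exponential is accurate. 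One small citation point: to know that both critical points have logarithmic leading coefficient $\dcoeff$ you should invoke Lemma~\ref{l:coeffcrit} (which shows $\Log(p)$ satisfies the critical coefficient conditions) together with Proposition~\ref{p:coefficient} (uniqueness of such a point); Corollary~\ref{c:valcrit} only takes care of the valuation $\crit$.
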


\begin{proof} Suppose $p=p_0\exp_T(w)$ and $p'=p_0\exp_T(w')$ where $p_0=e^{\dcoeff}t^{\crit}$.
Let us write $w=\sum_{\delta\in \mathcal C}t^\delta w_\delta$ and 
$w'=\sum_{\delta\in \mathcal C}t^\delta w'_\delta$
where $\mathcal C\in\MonSeq$ is the same set for $w$ as for $w'$, but the coefficients $w_\delta,w'_\delta\in N_\C$ are allowed to be zero.   We prove that $w=w'$ by induction on the totally ordered set $\mathcal C$. Let $\sigma\in \mathcal C$. Suppose that we have shown that $w_\delta=w'_\delta$ for all $\delta<\sigma$. If $\sigma$ is the minimal element of $\mathcal C$, then this induction assumption is trivially true, giving the start of the induction. Let us set
\[
G_{\ge\varepsilon}(p)=G(p)-G_{<\varepsilon}(p).
\]
Consider for any $\varepsilon\ge 0$  
\[
G_{\ge \varepsilon}(p)=\sum_{\{i\mid \delta_i\ge \varepsilon\}} \gamma_i p^{v_i}v_i= 
\sum_{\{i\mid \delta_i\ge \varepsilon\}} \gamma_i p_0^{v_i}\exp\left (\sum_{\delta\in\mathcal C}t^\delta \langle v_i,w_\delta\rangle\right ) v_i,
\]
and similarly for $ G_{\ge \varepsilon}(p')$ with each $w_\delta$ replaced by $w'_{\delta}$. Since $G(p)=G(p')$, we have that $G_{\ge \varepsilon}(p)- G_{\ge \varepsilon}(p')=0-(G_{< \varepsilon}(p)- G_{<\varepsilon}(p'))\in  B_{<\varepsilon}\otimes \KK$. Thus,
\begin{equation}\label{e:difference}
\sum_{\{i\mid \delta_i\ge \varepsilon\}} \gamma_i p_0^{v_i}\left[\exp\left (\sum_{\delta\in\mathcal C}t^\delta \langle v_i,w_\delta\rangle\right ) -\exp\left (\sum_{\delta\in\mathcal C}t^\delta \langle v_i,w'_\delta\rangle\right )\right] v_i\in B_{<\varepsilon}\otimes \KK.
\end{equation}
Note that by the induction hypothesis, any terms in the expansions of $ G(p)$ and $ G(p')$ involving only $w_\delta$ for $\delta<\sigma$ cancel out. 
The expression \eqref{e:difference} can be divided up into the sum $S_\varepsilon=\sum_{\{i\mid\delta_i=\varepsilon\}}(...)$ and $S_{>\varepsilon}=\sum_{\{i\mid\delta_i>\varepsilon\}}(...)$, giving $ G_{\ge\varepsilon}(p) -G_{\ge \varepsilon}(p')=S_{\varepsilon}+S_{>\varepsilon}\in B_{<\varepsilon}\otimes\KK$.
We consider now $
\Coeff_{\tau+\sigma+\varepsilon}(S_{\varepsilon}+S_{>\varepsilon})$.  
For the $S_{>\varepsilon}$ summand we have
\begin{equation*}
\Coeff_{\tau+\sigma+\varepsilon}(S_{>\varepsilon})=0.
\end{equation*}
This is because no $t^\delta\langle v_i,w_\delta\rangle$ with $\delta\ge\sigma$ can contribute to the  $\Coeff_{\tau+\sigma+\varepsilon}$ if already $\delta_i>\varepsilon$. But for all $\delta<\sigma$ we have that $w_\delta=w'_\delta$ and any contributions to  $\Coeff_{\tau+\sigma+\varepsilon}$ will cancel out. 

For the $S_{\varepsilon}$ summand we get
\begin{equation}\label{e:Sep}
\Coeff_{\tau+\sigma+\varepsilon}(S_{\varepsilon})=\sum_{\{i\mid \delta_i=\varepsilon\}}\Coeff(\gamma_i)e^{\langle v_i,\dcoeff\rangle}\langle v_i,w_\sigma-w'_{\sigma}\rangle v_i,
\end{equation}
using again the induction hypothesis to cancel out any other terms. Since $S_{\varepsilon}+S_{>\varepsilon}\in B_{<\varepsilon}\otimes\KK$, it now follows that \eqref{e:Sep} lies in $B_{<\varepsilon}\otimes\C$. Thus we have shown the following property of $w_{\sigma}-w'_{\sigma}$. 
\vskip .2cm
\begin{paragraph}{\it Property} For the element $w=w_{\sigma}-w'_{\sigma}$ of $N_\C$ we have that 
\begin{equation}\label{e:Sep2}
\sum_{\{i\mid \delta_i=\varepsilon\}}\Coeff(\gamma_i)e^{\langle v_i,\dcoeff\rangle}\langle v_i,w\rangle v_i\in B_{<\varepsilon}\otimes\C
\end{equation}
for all $\varepsilon\ge 0$.
\end{paragraph}
\vskip .2cm
It therefore follows by Lemma~\ref{l:wvanishing} that $w_{\sigma}-w'_{\sigma}=0$, completing the induction step. Thus the positive critical point is unique. 
\end{proof}

\begin{remark} While the $u'_j$ in the Recursive Construction of $p_k$ and hence of $p_{\infty}$ are non-unique, see Remark~\ref{r:nonuniqueness}, the uniqueness of the positive critical point, Proposition~\ref{p:uniqueness}, implies that the set $\mathcal D$ and the sums $\sum_{j\in S_{\delta}} u'_j\in N_\C$ from \eqref{e:winfty} are unique. 
\end{remark}

\begin{defn}  Let $W=\sum_i\gamma_i x^{v_i}$ be a positive, complete  Laurent polynomial. We denote by $\pcrit=\pcrit(W)$ the critical point of $W$ in $T(\KK_{>0})$, which is defined recursively above, see Definition~\ref{d:pinfty}, and whose uniqueness is proved in Proposition~\ref{p:uniqueness}. We refer to $\pcrit$ as the \emph{positive critical point} of $W$.
\end{defn}

\begin{proof}[Proof of Theorem~\ref{t:critmax}] We have already proved that the positive critical point of a positive, complete  Laurent polynomial exists and is unique, and its valuation is the canonical point associated to the complete  Newton datum. Therefore Theorem~\ref{t:critmax} follows from Lemma~\ref{l:critmax}. 
\end{proof}

 We end this section with a small, illustrative  example.
\begin{example} \label{ex:GWexample} Let $T=\KK^{*}$, and consider the complete and positive Laurent polynomial on $T$ given by 
\[
W=t^3 x^{-2}+tx\inv +x.
\]
This Laurent polynomial is taken from \cite[Example~1.11(b)]{GonzalezWoodward:Adv19}, where it appears as mirror to a GIT quotient of $\C^3$ by a particular polarized action of $(\C^*)^2$ (``projective line with extra term''). 

The complete Newton datum $\Xi_W=(M_\R,\R\oplus M_\R,\eta,\beta,\mathcal W)$ associated to $W$ as in Example~\ref{ex:XiW}, has $M_\R=\R$ and $\mathcal W=\{(3,-2),(1,-1),(0,1)\}$. The map $\eta:\R\to \R\oplus M_\R$ is the inclusion of the $\R$-axis, and the map $\beta=\pr_2$ is shown below, along with the augmented Newton polytope (a triangle), and its image the Newton polytope (the interval $[-2,1]$).

  \begin{tikzpicture}
\draw (-2,-2) node[anchor=west]{$(3,-2)$}
  -- (-4,-1) node[anchor=north]{$(1,-1)\quad $}
  -- (-5,1) node[anchor=east]{$(0,1)$}
  -- cycle;\fill[green!20!white](-2,-2)   -- (-4,-1)-- (-5,1) 
  -- cycle;
  \draw[thin] (-7,0) -- (-2,0) node[anchor=north]{${\R}$};
  \draw[thin] (-5,-2.5) -- (-5,2) node[anchor=west]{$M_\R$};
  \draw [->] (0,0) -- node[anchor=south]{$\pr_2$} (2,0) ;
 \draw[thin] (3,-2.5) -- (3,2) node[anchor=west]{$M_\R$};
  \draw (3,-2) node[anchor=west]{$-2$} -- (3,1)  node[anchor=west]{$1$} ;
    \draw[thick,green!40!white] (3,-2) -- (3,1)   ;
    \draw (2.95,-2)--(3.05,-2);
        \draw (2.95,1)--(3.05,1);
\end{tikzpicture}

Recall from Definition~\ref{d:minimalheightAugNewton} and Lemma~\ref{l:max} that the maximal value $\tau$ of $\Trop(W)$ is the `height above $0\in M_\R$' of $\AugNewton(W)$, namely $\tau$ is minimal for $\eta(\tau)=(\tau,0)$ to lie in $\AugNewton(W)$. This point $\eta(\tau)$ is clearly $(\frac 12,0)$, therefore $\tau=\frac 12$. The minimal face $F$  containing this point has codimension $1$. Therefore there is a unique vector $\crit$ in $N_\R$ such that $(1,\crit)$, viewed as an element of $(\R\oplus M_{\R})^*$, is constant on $F$. Namely we see that $\crit=\frac 12$. (Alternatively, $\crit$ is the gradient at zero of the function defined in a neighbourhood of $0$ in $M_\R$, whose graph is given by $F$). 

As a check, observe that $\Trop(W)(d)=\min(3-2d,1-d,d)$ indeed takes the value $\tau=\frac 12$ for $\crit=\frac 12$, and for any other $d\in N_\R$ the value $\Trop(W)(d)$ is clearly smaller. In particular $\crit$ lies in the polytope $\mathcal P_W=\{d\mid\Trop(W)(d)\ge 0\}=[0,1]$. Finally the Laurent polynomial $W$ has three critical points overall, which are $\pm t^{\frac 12}$ and $-2t^2$, see \cite[Example 1.14]{GonzalezWoodward:Adv19}. In agreement with our main theorem, we see that precisely one of these is positive, namely $\pcrit=t^{\frac 12}$. As computed above its valuation is $\crit=\frac 12$, and, as we see, $\dcoeff=1$. In this example there are no higher order terms.
\end{example}
\begin{example}
An example where the lowest face above $0$ of the augmented Newton polytope does \textit{not} have codimension $1$, so that the construction of the tropical critical point therefore requires more than one stage, is given by $T=(\KK^*)^2$ and the Laurent polynomial
\[
W=x_1 + x_2+t^{\frac 1 2}\frac 1 {x_2}+t^{-\frac 1 4}x_1 x_2 +t\frac{1}{x_1 x_2}.
\]
This example is related to a $2$-point blowup of $\C P^2$ discussed in \cite{KimLeeSanda:19}. The moment polytope $\mathcal P_W$ is a pentagon, each of the five terms of $W$ describing one of the sides. In \cite[Example 4.10]{KimLeeSanda:19}, Kim, Lee and Sanda find an entire line segment $\{(d_1,\frac 14)\mid \frac 1 4\le d_1\le \frac 3 8\}$ in the interior of $\mathcal P_W$ where the moment map fibers are non-displaceable Lagrangians. Nevertheless $W$ has only one tropical critical point. It is left to the reader to check that this special point agrees with the right-most point of the line segment, namely $\crit=(\frac 3 8,\frac 1 4)$. 
\end{example}

\section{Rationality, generalised Puiseaux polynomials, and nondegeneracy}\label{s:rationalityetc}

\subsection{Rationality}\label{s:rationality}
In this section we observe that Theorem~\ref{t:main} holds also over the subfield $\mathcal K$ of Puiseaux series with its positive part $\mathcal K_{>0}$, see Definition~\ref{d:rationalT}. 
The following result is a corollary of the proof of Theorem~\ref{t:main} given in Section~\ref{s:mainproof}.

\begin{prop}\label{p:rational} Suppose $W=\sum\gamma_i x^{v_i}$ is a complete  Laurent polynomial with coefficients $\gamma_i\in\mathcal K_{>0}$. Then the unique positive critical point $\pcrit$ from Theorem~\ref{t:main} lies in $T(\mathcal K_{>0})$. In particular its associated tropical point  $[\pcrit]$ gives rise to a canonical point $\crit\in N_{\mathbb Q}$.
\end{prop}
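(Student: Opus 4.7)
My plan is to revisit the three-stage construction of $\pcrit = e^{\dcoeff}t^{\crit}\exp_T(w_{\infty})$ from Section~\ref{s:mainproof} and verify at each stage that, under the hypothesis $\gamma_i \in \mathcal{K}_{>0}$, the output respects the product decomposition~\eqref{e:totalexpPuiseaux} of $T(\mathcal{K}_{>0})$.

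First, I would show that $\crit = \crit(\Xi_W) \in N_{\Q}$. Since $\gamma_i \in \mathcal{K}_{>0}$, each $c_i = \operatorname{Val}_{\mathcal K}(\gamma_i)$ lies in $\Q$, so the vertex set $\{(c_i,v_i)\}$ of the complete Newton datum $\Xi_W$ lies in $\Q \oplus M_{\Q}$. The recursion producing $\Xi_k = (V_k,U_k,\eta_k,\beta_k,\mathcal{W}_k)$ in Definition~\ref{d:specialpoint} is purely rational linear algebra on this data: each subspace $E_k$ is the rational span of vertex-differences of the minimal face $F_k$, and the projections $\sigma_k$ are defined over $\Q$. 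Consequently the hyperplane $\widetilde{E}$ of Lemma~\ref{l:1a} has a rational defining equation, so $\widetilde\crit(\Xi_W) \in V_{\Q}^*$, and hence $\crit \in N_{\Q}$.

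Second, the logarithmic leading coefficient $\dcoeff \in N_{\R}$ requires no adjustment: a real vector $\dcoeff$ is exactly what the $N_{\R}$-factor of~\eqref{e:totalexpPuiseaux} allows, and Proposition~\ref{p:coefficient} already constructs it intrinsically from $\Xi_W$ and the $\Coeff(\gamma_i) \in \R_{>0}$ without reference to whether $\gamma_i \in \mathcal K$ or merely $\KK$.

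The main obstacle is the third stage: showing that $w_\infty \in N_{\mathfrak{m}_{\mathcal{K}}}$, i.e.\ that the exponents appearing in $w_\infty$ share a common denominator. I would refine the proof of Lemma~\ref{l:convergence} as follows. Pick a positive integer $D$ such that each $\gamma_i$ lies in $\C((t^{1/D}))$ and such that $\crit \in \tfrac{1}{D} N$ (possible by the first step). The exponent sets $\Gamma_i$ of the $\gamma_i$ then lie in $\tfrac{1}{D}\Z$, and because $c_i, \langle v_i,\crit\rangle \in \tfrac{1}{D}\Z$ and $\Trop(W)(\crit) \in \tfrac{1}{D}\Z$, also $\mathcal{E} = \{\delta_i(\crit)\} \subset \tfrac{1}{D}\Z$. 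Hence the semigroup $\mathcal{S} = \langle \Gamma_1,\dots,\Gamma_n,\partial\mathcal{E}\rangle_{\Z_{\geq 0}}$ appearing in that lemma is contained in $\tfrac{1}{D}\Z_{\geq 0}$, and since $\mathcal{D} \subset \mathcal{S}$, every exponent in $w_\infty = \sum_{\delta \in \mathcal{D}} t^\delta \bigl(\sum_{j \in S_\delta} u'_j\bigr)$ lies in $\tfrac{1}{D}\Z_{\geq 0}$. The vector coefficients $\sum_{j \in S_\delta} u'_j$ lie in $N_{\C}$ by construction, so $w_\infty \in N_{\mathfrak{m}_{\mathcal{K}}}$. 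Combining the three stages, $\pcrit \in T(\mathcal{K}_{>0})$, and the rationality of $\crit = \Val(\pcrit)$ follows.
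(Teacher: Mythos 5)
Your proposal is correct and follows essentially the same route as the paper, which simply notes that $\crit\in N_{\Q}$ is clear from the rationality of the construction in Section~\ref{s:critconstr} and that the rest follows by tracing the exponents through the proofs in Section~\ref{s:mainproof}. Your write-up just makes that tracing explicit, in particular the common-denominator argument via the semigroup $\mathcal S$ of Lemma~\ref{l:convergence} showing $w_\infty\in N_{\mathfrak m_{\mathcal K}}$, which is exactly the intended verification.
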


\begin{proof} A priori $W$ has a unique positive critical point in $\KK_{>0}$. From the construction in Section~\ref{s:critconstr} it is clear that $\crit$ lies in $N_{\Q}$ if all the $v_i$ have valuation in $\Q$. This proposition follows by tracing through the proofs in Section~\ref{s:mainproof} to see that the remaining terms in the construction of $\pcrit$ also have rational exponents only. 
\end{proof}

\subsection{Generalised Puiseaux polynomials} We recall the Lemma~\ref{l:GalkinGen} of Galkin. This lemma can be restated the following way. Suppose we have a function
\[
W: T(\R_{>0})\to \R_{>0},\qquad W=\sum c_i x^{\nu_i} 
\]
where $\nu_i\in M_\R$ and $c_i\in\R_{>0}$. Thus $W$ is `positive' over $\R$, but the exponents need not be integral. Then $W$  still has a Newton polytope, namely $\ConvexHull(\{\nu_i\})$, and Galkin's Lemma says that if the Newton polytope  is full-dimensional with $0$ in its interior, then $W$ has a unique critical point in $T(\R_{>0})$. 

As an analogue of this result we have the following generalisation of Theorem~\ref{t:main}. Consider a split torus $T$ over $\KK$ with character group $M$, and call $W=\sum \gamma_i x^{v_i}$ where $\gamma_i\in \KK_{>0}$ and now $v_i\in M_{\R}$ a {\it generalised Puiseaux polynomial} over $\KK_{>0}$. Note that $W$ still gives a well-defined map
\[W: T(\KK_{>0})\to \KK_{>0}.
\]
Indeed recall that elements of $T(\KK_{>0})$ are of the form $p=e^u t^d\exp(w)$ for $u,d\in N_\R$ and $w\in \mathfrak m$, compare Section~\ref{s:exp}. And therefore we can evaluate $x^{v_i}$ on $p $ by setting $p^{v_i}=e^{\left< v_i,u \right>}t^{\left< v_i,d \right>}\exp({\left< v_i,w \right>})$.

To $W$ we associate its Newton polytope $\Newton(W):=\ConvexHull(\{v_i\})$ inside $M_{\R}$, and we again call $W$ complete  if its Newton polytope is full-dimensional and contains $0$ in its interior.

\begin{theorem}\label{t:genmain} Suppose $W=\sum_{i=1}^n \gamma_i x^{v_i}$ is a complete, generalised Puiseaux polynomial over $\KK_{>0}$. Then $W$ has a unique critical point $\pcrit$ in $T(\KK_{>0})$, and the valuation $\crit$ of $\pcrit$ is the canonical point defined in Section~\ref{s:critconstr} of the complete  Newton datum $\Xi_W=(M_\R,\R\oplus M_\R,\eta,\beta,\{v_i\})$, compare Example~\ref{ex:XiW}.
\end{theorem}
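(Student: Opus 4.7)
The plan is to revisit the proof of Theorem~\ref{t:main} in Section~\ref{s:mainproof} and observe that almost every argument already treats the exponents as elements of $M_\R$, so the generalisation is essentially free once the notion of critical point is set up correctly in the non-integer setting.

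First I would extend the evaluation and gradient construction. For $v\in M_\R$ and $p=e^u t^d\exp_T(w)\in T(\KK_{>0})$, set $p^v:=e^{\langle v,u\rangle}t^{\langle v,d\rangle}\exp(\langle v,w\rangle)\in\KK_{>0}$, which agrees with the usual character evaluation when $v\in M$. Then $W(p)\in\KK_{>0}$ is defined, and we set
\[
G:T(\KK_{>0})\to M_\KK,\qquad G(x):=\sum_{i=1}^n\gamma_i x^{v_i}v_i,
\]
and declare $p$ to be a critical point precisely when $G(p)=0$, equivalently when $\partial_u W(p):=\langle G(p),u\rangle=0$ for all $u\in N$. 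With this definition the derivation of the tropical critical conditions in Lemma~\ref{l:btropcrit} carries through verbatim: expanding each $\gamma_i p^{v_i}$ in $t$, grouping by valuation, and using the positivity $\lambda_{i,0}>0$ gives the same identities \eqref{e:critPtepsilon} and \eqref{e:keyeq}.

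Next I would verify that Sections \ref{s:critconstr}--\ref{s:leadingcoeff} apply unchanged. The split complete Newton datum $\Xi_W=(M_\R,\R\oplus M_\R,\eta,\beta,\{(c_i,v_i)\})$ makes sense with $v_i\in M_\R$, and the construction of $\crit(\Xi_W)$ and everything in Section~\ref{s:critconstr} is purely in the category of finite-dimensional real vector spaces. The uniqueness Lemma~\ref{l:uniqueness} and Proposition~\ref{p:specialpointtropcrit} therefore yield the tropical critical point and $\Val(\pcrit)=\crit(\Xi_W)$ once $\pcrit$ is known to exist. For the leading coefficient, the only analytic input is Galkin's Lemma~\ref{l:GalkinGen}, already stated for $\nu_i\in(\R^m)^*$, so Proposition~\ref{p:coefficient} produces a unique $\dcoeff\in N_\R$ as before.

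The main point to check is Section~\ref{s:extension}. Proposition~\ref{p:recursion} and the Recursive Construction manipulate $G(x)$ purely algebraically and nowhere use integrality, so they produce the same sequences $(p_k)_k$, $(w_k)_k$, $(\nu_k)_k$, $(\varepsilon_{h(k)})_k$. The substantive obstacle is Lemma~\ref{l:convergence}, which requires the exponent set $\mathcal D=\{\nu_k-\varepsilon_{h(k)}\}$ to lie in $\MonSeq$. Re-examining its proof, the auxiliary semigroup $\mathcal S=\langle\Gamma_1,\dotsc,\Gamma_n,\partial\mathcal E\rangle_{\Z_{\ge 0}}$ is built from the exponent sets $\Gamma_i\in\MonSeq$ of the coefficients $\gamma_i$ and the \emph{finite} set $\partial\mathcal E$; neither ingredient depends on $v_i\in M$, only on $\gamma_i\in\KK_{>0}$ and on $\{1,\dots,n\}$ being finite. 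Hence $\mathcal S\in\MonSeq$ and the induction giving $\mathcal D\subset\mathcal S$ is unchanged. Convergence of $(w_k)_k$ then yields $\pcrit\in T(\KK_{>0})$, and uniqueness follows by the same induction on $\mathcal C\in\MonSeq$ as in Proposition~\ref{p:uniqueness}, since Lemma~\ref{l:wvanishing} is a statement about the symmetric bilinear form $\mathbb B_h$ in \eqref{e:Bh} on real vector spaces and is insensitive to whether the $v_i$ are integral. The identification $\crit=\crit(\Xi_W)$ is then Corollary~\ref{c:valcrit} applied in the extended setting.
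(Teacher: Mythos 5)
Your proposal is correct and matches the paper's approach: the paper's proof of Theorem~\ref{t:genmain} simply observes that the argument of Section~\ref{s:mainproof} never uses integrality of the $v_i$ (with the evaluation $p^{v_i}=e^{\langle v_i,u\rangle}t^{\langle v_i,d\rangle}\exp(\langle v_i,w\rangle)$ set up just as you do), and your more detailed walk-through of the tropical critical conditions, the canonical point, Galkin's lemma, the recursive construction, and Lemma~\ref{l:convergence} is exactly the verification the paper leaves implicit.
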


\begin{proof} The proof of Theorem~\ref{t:main} given in Section~\ref{s:mainproof} does not make any use of the integrality of the exponents $v_i$. Therefore exactly the same proof shows the generalised version, Theorem~\ref{t:genmain}.
\end{proof}

\begin{remark}\label{r:rational2} We also get a rational version of the above theorem if we replace {\it generalised Puiseaux polynomials} over $\KK_{>0}$ by {\it Puiseaux polynomials} over $\mathcal K_{>0}$, where Puiseaux polynomial means that the exponents are rational numbers.  
\end{remark}

\subsection{Nondegeneracy}
Finally, the following observation can be seen as a direct analogue of the analytic property enjoyed by Galkin's positive critical point. Namely a positive, complete  Laurent polynomial over $\R$ defines a convex function $f:\R_{>0}^d\to \R$  (that is whose Hessian is everywhere positive definite), and this has a unique {\it nondegenerate} critical point which is a global minimum. 
\begin{lemma}\label{l:nondeg} Given any positive generalised Puiseaux polynomial $W=\sum_{i=1}^n \gamma_i x^{v_i}$ and any $p\in T(\KK_{>0})$, the bilinear map $\mathcal H_p: N_\R\x N_\R\to \KK$ defined by $\mathcal H_p(u_1,u_2):=\partial_{u_1}\partial_{u_2}W(p)$, has the property  that $\mathcal H_p(u,u)\in \KK_{>0}$ for all $u\in N_\R$.
\end{lemma}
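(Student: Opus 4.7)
The plan is to compute $\mathcal{H}_p(u,u)$ explicitly as a sum of squares weighted by positive elements, and then invoke closure properties of $\KK_{>0}$. Concretely, I would use the fact that for any $v \in M_\R$ and any $T$-invariant derivation $\partial_u$ (with $u \in N_\R$), we have $\partial_u(x^v) = \langle v, u\rangle\, x^v$. Applying this twice and using linearity yields
\[
\partial_{u_1}\partial_{u_2} W(x) \;=\; \sum_{i=1}^n \gamma_i\, \langle v_i, u_1\rangle\, \langle v_i, u_2\rangle\, x^{v_i}.
\]
Setting $u_1 = u_2 = u$ and evaluating at $p$ gives
\[
\mathcal{H}_p(u,u) \;=\; \sum_{i=1}^n \gamma_i\, \langle v_i, u\rangle^2\, p^{v_i}.
\]

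The key observation is then that every summand is a non-negative element of $\KK$ in the following sense: $\gamma_i \in \KK_{>0}$ by positivity of $W$; $p^{v_i} = \chi^{v_i}(p) \in \KK_{>0}$ since $p \in T(\KK_{>0}) = \Hom(M, \KK_{>0})$ (as in Section~\ref{s:tropicalisation}, this is the defining property); and $\langle v_i, u\rangle^2 \in \R_{\ge 0}$. Using that $\KK_{>0}$ is closed under multiplication and under scaling by elements of $\R_{>0}$, each summand with $\langle v_i,u\rangle \neq 0$ lies in $\KK_{>0}$, while the remaining summands vanish. Finally, $\KK_{>0}$ is closed under addition, so the sum of the nonzero summands (and hence $\mathcal{H}_p(u,u)$ itself) lies in $\KK_{>0}$ whenever at least one summand is nonzero.

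The only subtlety is the degenerate case where $\langle v_i, u\rangle = 0$ for all $i$, in which $\mathcal{H}_p(u,u) = 0$. Under the completeness hypothesis relevant for our applications, the $v_i$ span $M_\R$, so this forces $u = 0$; thus $\mathcal{H}_p(u,u) \in \KK_{>0}$ for every nonzero $u \in N_\R$. In particular, the bilinear form $\mathcal{H}_p$ is non-degenerate, which is the statement needed for the application to non-displaceable Lagrangians in Section~\ref{s:symplectic}. There are no real obstacles here — the entire argument is essentially the observation that $\mathcal{H}_p$ is a $\KK_{>0}$-weighted sum of squares, and the whole proof is at most a few lines once the formula for $\partial_{u_1}\partial_{u_2} W$ is written down.
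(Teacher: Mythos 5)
Your proposal is correct and follows essentially the same route as the paper: compute $\mathcal H_p(u,u)=\sum_i \gamma_i\langle v_i,u\rangle^2 p^{v_i}$ and conclude from the closure of $\KK_{>0}$ under multiplication, positive real scaling and addition. Your extra remark on the degenerate case $\langle v_i,u\rangle=0$ for all $i$ is in fact slightly more careful than the paper, which simply asserts $\langle v_i,u\rangle^2\in\R_{>0}$; this only matters for $u$ orthogonal to all $v_i$ (in particular $u=0$), a case the lemma's intended use under completeness renders harmless.
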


\begin{proof} It follows immediately from the formula for $W$ and the definition of $\partial_u$ that
\[
\mathcal H_p(u_1,u_2)=\sum_{i=1}^n \gamma_i\langle v_i,u_1\rangle\langle v_i, u_2\rangle p^{v_i}.
\] 
Thus for $u_1=u_2=u$ we get $
\mathcal H_p(u,u)=\sum_{i=1}^n \gamma_i\langle v_i,u\rangle^2 p^{v_i}$, which is clearly in $\KK_{>0}$, since  $p\in T(\KK_{>0})$ and also $\gamma_i\in \KK_{>0}$, and moreover  $\langle v_i,u\rangle^2\in\R_{>0}$. \end{proof}

\begin{remark} There is also a stronger  version of nondegeneracy which can be required of a critical point of $W$. This property, called {\it strongly nondegenerate}, was introduced  by Fukaya, Oh, Ohta and Ono in the context of critical points of potential functions for symplectic toric manifolds, see \cite[Definition 10.2]{FOOO:I}. We think this generalises in our setting to the condition that the quadratic form $\mathbb B_h$ from \eqref{e:Bh} be non-degenerate (whenever the underlying vector space, $B_{<\varepsilon_h}^\perp/B_{\le\varepsilon_h}^\perp$ is nonzero). Our positive critical point ${\pcrit}$ is strongly nondegenerate in this sense, since the bilinear form $\mathbb B_h$ in the case of $\pcrit$ is always positive definite.
\end{remark}

\section{Applications in toric algebraic geometry} \label{s:toric}
Let us consider a complete normal toric variety $X$ for the torus $T^\vee$ over $\C$ with cocharacter group $M\cong \Z^r$. Namely $X=X_\Sigma$ for a complete rational polyhedral fan $\Sigma$ in $M_\R$. We refer to \cite{Fulton:toricbook} for background. Let $v_1,\dotsc, v_n\in M$ denote the primitive generators of the rays of the fan $\Sigma$. The associated torus-invariant divisors of $X$ are denoted $D_1,\dotsc, D_n$. The toric variety $X$ will be fixed throughout this section.

To any $T^\vee$-invariant $\R$-Weil divisor $D=\sum c_i D_i$ of $X$ we associate a Laurent polynomial $W=W_{(X,D)}:T(\KK)\to \KK$ by
\begin{equation}\label{e:WD}
W_{(X,D)}=\sum_{i=1}^{n}t^{c_i}x^{v_i},
\end{equation}
see Sections~\ref{s:applicationsintro1} and \ref{s:applicationsintro2}. We refer to  Section~\ref{s:symplectic} for background on this function associated to $X$ in the case $X$ is a smooth projective toric variety,  where it was previously introduced in the context of mirror symmetry. In this section we do not make any smoothness assumptions.  
Note that the Laurent polynomial \eqref{e:WD} is clearly positive and it is also complete, by completeness of the fan $\Sigma$.  If $D$ is  a $\Q$-Weil divisor then $W_{(X,D)}$ is defined over the field of Puiseux series $\mathcal K$.  

Let us consider the tropical critical point $\crit\in N_\R$ of $W_D$. In case $D$ is  a $\Q$-Weil divisor, the tropical critical point $\crit$ lies in $N_\Q$, by Section~\ref{s:rationality}.
If $D$ is a Weil divisor on the other hand (i.e. with $\Z$-coefficients), it does not follow that the tropical critical point lies in the $\Z$-lattice $N$. Therefore we make the following definition.

\begin{defn}\label{d:intbal} Call a $T^\vee$-invariant Weil divisor $D=\sum c_i D_i$ on $X_{\Sigma}$ {\it integrally balanced} if the tropical critical point of the associated Laurent polynomial $W_D$ lies in $N$.  
\end{defn}

It is easy to see that being integrally balanced is a property of the divisor class $[D]$. Namely we have the following lemma.  

\begin{lemma}\label{l:intbal} Suppose $D$ and $D'$ are linearly equivalent $T^\vee$-invariant Weil divisors on $X_\Sigma$. That is $D'=D+(\chi^d)$, for some $d\in N$. Then the tropical critical point $\crit$ of $W_D$ and the tropical critical point $\crit'$ of $W_{D'}$ are related by the formula
\[
\crit'=\crit-d.
\] 
In particular $D$ is integrally balanced if and only if $D'$ is integrally balanced.  
\end{lemma}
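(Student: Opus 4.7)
The plan is to identify the change in Laurent polynomial induced by the principal divisor $(\chi^d)$ as translation by a monomial element of $T(\KK_{>0})$, then invoke uniqueness of the positive critical point to deduce the formula for the tropical critical point.

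First I would unravel what $D' = D + (\chi^d)$ means at the level of coefficients. Since the principal divisor of a character $\chi^d$ on the toric variety $X_\Sigma$ (for $d\in N$, viewed as a character of $T^\vee$) is the standard formula $(\chi^d) = \sum_{i=1}^n \langle v_i, d\rangle D_i$, writing $D = \sum c_i D_i$ gives $D' = \sum c_i' D_i$ with $c_i' = c_i + \langle v_i, d\rangle$. Therefore
\[
W_{D'}(x) = \sum_{i=1}^n t^{c_i+\langle v_i,d\rangle} x^{v_i}.
\]

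Next I would interpret this as a reparametrisation. Recall from Definition~\ref{d:TropN} that $t^d \in T(\KK_{>0})$ is the monomial element characterised by $\chi^{v}(t^d) = t^{\langle v, d\rangle}$ for all $v \in M$. Multiplying $x$ by $t^d$ in the torus $T(\KK)$ gives $(t^d\cdot x)^{v_i} = t^{\langle v_i, d\rangle} x^{v_i}$, and so
\[
W_{D'}(x) = \sum_{i=1}^n t^{c_i}(t^d\cdot x)^{v_i} = W_D(t^d\cdot x).
\]
In particular $x \in T(\KK_{>0})$ is a positive critical point of $W_{D'}$ if and only if $t^d\cdot x$ is a positive critical point of $W_D$. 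Both $W_D$ and $W_{D'}$ are positive and complete (having the same Newton polytope, since multiplication by $t^d$ does not change the exponent vectors $v_i$), so Theorem~\ref{t:main} gives them each a unique positive critical point $\pcrit(W_D)$ and $\pcrit(W_{D'})$, and the relation above forces
\[
\pcrit(W_{D'}) = t^{-d}\cdot \pcrit(W_D).
\]

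Finally I would apply the valuation homomorphism $\Val: T(\KK_{>0}) \to N_\R$ from Definition~\ref{d:TropN}. Since $\Val(t^{-d}) = -d$ and $\Val$ is a group homomorphism, this yields $\crit' = \crit - d$ as claimed. The integrally balanced statement then follows immediately: as $d\in N$, we have $\crit\in N$ if and only if $\crit - d = \crit'\in N$. This argument is essentially a direct computation; the only point requiring care is keeping track of the dualities between $T^\vee$ (acting on $X_\Sigma$) and $T$ (on which $W_D$ lives), so that the pairing $\langle v_i, d\rangle$ appearing in $(\chi^d)$ matches the pairing used to define $t^d\in T(\KK_{>0})$.
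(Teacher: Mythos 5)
Your proposal is correct and follows essentially the same route as the paper: the paper writes down $p' = t^{-d}\pcrit$ and verifies directly via the logarithmic derivatives $\partial_u$ that it is a critical point of $W_{D'}$, which is exactly the computation your identity $W_{D'}(x) = W_D(t^d\cdot x)$ packages, and then both arguments conclude with the uniqueness statement of Theorem~\ref{t:main} and the fact that $\Val$ is a homomorphism with $\Val(t^{-d}) = -d$.
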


\begin{proof}
 Recall that the relation between divisors $D_i$ and the primitive vectors $v_i$ is encapsulated in the equality $(\chi^d)=\sum_{i=1}^n \langle v_i, d\rangle D_i$. Therefore if $D=\sum c_i D_i$, then we have that $D'=\sum (c_i+
 \langle v_i,d \rangle) D_i$ and $
W_{D'}=\sum_{i=1}^n t^{c_i+\langle v_i, d\rangle } x^{v_i}$.

Let $p':= t^{-d } \,\pcrit$, where we are using the notation from Definition~\ref{d:t^d}, and observe that it is a critical point of $W_{D'}$. 
Namely the logarithmic derivative along $u\in N_\R$ gives
\[
\partial_u W_{D'}(p')=\sum_{i=1}^n \langle v_i, u\rangle  t^{c_i+\langle v_i, d\rangle } t^{-\langle v_i, d\rangle } p^{v_i}=\sum_{i=1}^n \langle v_i, u\rangle  t^{c_i }  p^{v_i}=\partial_u W_{D}(p)=0.
\]
By the uniqueness statement in Theorem~\ref{t:main} it follows that $p'$ is the positive critical point of $W_{D'}$. Hence $\crit'=\Val(p')=\Val(t^{-d}\pcrit)=\crit-d$.
\end{proof}

\begin{cor}\label{c:critdivisor} In any integrally balanced divisor class $[D]$ there is a unique choice of $T^\vee$-invariant divisor $D'$ such that the tropical critical point $\crit'$ of $W_{D'}$ is equal to zero. \qed
\end{cor}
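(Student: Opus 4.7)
The plan is to apply Lemma~\ref{l:intbal} directly, since it exactly controls how the tropical critical point transforms under adding a principal divisor $(\chi^d)$ with $d\in N$.

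For existence, I would start by picking any $T^\vee$-invariant Weil divisor $D$ in the class $[D]$, and let $\crit\in N_\R$ denote the tropical critical point of the associated $W_D$. By hypothesis $[D]$ is integrally balanced, so in fact $\crit\in N$ (note that by Lemma~\ref{l:intbal}, the property of being integrally balanced depends only on $[D]$ and not on the chosen representative). Since $\crit\in N$, the Laurent character $\chi^{\crit}$ on $T^\vee$ is well-defined and its principal divisor $(\chi^{\crit})$ is a $T^\vee$-invariant Weil divisor. I would then set
\[
D':=D+(\chi^{\crit}),
\]
which is $T^\vee$-invariant and linearly equivalent to $D$. Applying Lemma~\ref{l:intbal} with $d=\crit$, the tropical critical point of $W_{D'}$ is $\crit-\crit=0$, giving existence.

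For uniqueness I would invoke the standard toric fact that two $T^\vee$-invariant Weil divisors representing the same class in $\operatorname{Cl}(X_\Sigma)$ differ by a $T^\vee$-invariant principal divisor, which is necessarily of the form $(\chi^{d})$ for a unique $d\in N$ (this is the content of the exact sequence $N\to\bigoplus_i \Z D_i\to \operatorname{Cl}(X_\Sigma)\to 0$ from toric geometry). Thus if $D''$ is any other $T^\vee$-invariant divisor linearly equivalent to $D'$ whose associated $W_{D''}$ has tropical critical point zero, then $D''=D'+(\chi^d)$ for some $d\in N$, and Lemma~\ref{l:intbal} gives
\[
0=\crit''=\crit'-d=-d,
\]
so $d=0$ and hence $D''=D'$.

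There is no real obstacle here; the statement is a direct corollary of Lemma~\ref{l:intbal} combined with the definition of integrally balanced, and the only external input is the standard parametrisation of $T^\vee$-invariant representatives of a divisor class by $N$.
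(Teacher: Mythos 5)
Your proof is correct and is exactly the argument the paper intends: the corollary is stated with an immediate \qed because it follows directly from Lemma~\ref{l:intbal}, precisely as you spell out — shift $D$ by $(\chi^{\crit})$ (possible since integral balancedness puts $\crit\in N$) for existence, and apply the lemma again to force $d=0$ for uniqueness. Your use of the toric parametrisation of invariant representatives by $N$ (note the paper's swapped convention, with $N$ the character lattice of $T^\vee$) matches the paper's own framing of linear equivalence in Lemma~\ref{l:intbal}.
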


\begin{remark} The anticanonical class is integrally balanced for any toric variety $X_\Sigma$. In this case the unique $T^\vee$-invariant divisor with associated tropical critical point equal to $0$ just recovers the standard choice of anticanonical divisor given by the toric boundary divisor  $\sum_{i=1}^n D_i$.  
\end{remark} 

\begin{remark} The interpretations in this section, and particularly Corollary~\ref{c:critdivisor}, are a toric variety version of the first author's work in the setting of the flag variety $GL_n(\C)/B$, see \cite{Judd:Flag}. In that case the tropical critical point of the mirror Landau-Ginzburg model, with quantum parameters $q_i=t^{\langle\lambda, \omega_i^\vee\rangle}$, was computed directly and it picks out a special point in the Gelfand-Zetlin  polytope of the representation $V(\lambda)$. The paper \cite{Judd:Flag} also includes an analysis of which $\lambda$ correspond to `integrally balanced' divisor classes in $GL_n(\C)/B$.
\end{remark}

\begin{remark}\label{r:PD} If $D$ is very ample then we also note that the polytope    
\[
\mathcal P_{W_{(X,D)}}=\{d\in N_\R\mid \Trop(W_{(X,D)})(d)\ge 0\}
\] 
has an interpretation as a moment polytope $P_D$ of the toric variety $X$. Moreover the tropical critical point $\crit$ of $W_{(X,D)}$ automatically lies inside $\mathcal P_{W_{(X,D)}}$ by Theorem~\ref{t:critmax}. 
\end{remark}

\section{Applications in symplectic toric geometry} \label{s:symplectic}
We recall an approach to constructing non-displaceable Lagrangians using mirror symmetry that was introduced in the work of Fukaya, Oh, Ono and Ota for toric symplectic manifolds, see \cite{FOOO:survey,FOOO:I, FOOO:II,KimLeeSanda:19}, and in another version, for toric symplectic orbifolds, in work of Woodward \cite{Woodward:11}. We carry on using the notations from Section~\ref{s:tropicalisation}, but in this setting the {\it compact} form $T^\vee_c$ of the torus $T^\vee$ is acting, and we identify $M_\R$ with the Lie algebra $\mathfrak t_c^\vee$ of $T^\vee_c$, and thus $N_\R$ with its dual.

\subsection{Symplectic toric manifolds} Suppose $X$ is a smooth, compact toric symplectic manifold associated to a Delzant polytope\footnote{A Delzant polytope in $N_\R\cong\R^r$ is a polytope for which every vertex $d$  has $r$ edges whose edge directions can be represented by a $\Z$-basis of $N$, and these are precisely the moment polytopes of toric symplectic manifolds, see \cite{Guillemin:book,Canasdasilva:notes}.} $P$ in $N_\R$. Therefore $X$ has a Hamiltonian action of the torus $T^\vee_c$ and $P$ is the image is of the associated moment map, $P=\mu_X(X)$. Consider the canonical  description of $P$ as intersection of half-spaces, 
\[
P=\bigcap_{F_i} \{ d\in N_\R\mid \langle v_i,d\rangle + c_i\ge 0\},
\]
where $v_i\in M$ are the primitive inward pointing normals to the facets of $P$ and $c_i\in \R$.  This data is equivalently encoded in the Laurent polynomial 
\begin{equation}\label{e:WX}
W_{X}:=\sum_{i=1}^n t^{c_i}x^{v_i}.
\end{equation} This function $W_{X}$ is called the `leading order potential function' in \cite{FOOO:I} and is a kind of superpotential associated to $X$, as in the work of  \cite{Givental:Toric,Hori-Vafa, Batyrev:QCoh}. 
In \cite{FOOO:I} the function $W_{X}$ is a first approximation to a full potential function $\mathfrak {PO}_{X}$ constructed using moduli spaces of holomorphic discs in $X$. This full potential function, considered in the correct coordinates, exactly recovers $W_{X}$ in the case where $X$ is Fano. If $X$ is not Fano, then the potential function $\mathfrak {PO}_{X}$ still has $W_{X}$ as its leading term, but contains possibly infinitely many additional summands of higher order. The definition of $\mathcal P_W$ (as in \eqref{e:PW})  still makes sense for $W= {\mathfrak {PO}_{X}}$, though, and agrees with the polytope  $\mathcal P_{W_{X}}$, which in turn agrees with the moment polytope $P$ of $X$.  An application of the potential function  from \cite{FOOO:I,FOOO:II} is that if $p$ is any critical point of $\mathfrak {PO}_{X}$ such that the valuation $d$ of $p$ lies in $P$, then the Lagrangian torus fiber $\mu_X\inv (d)$  is a {\it non-displaceable} Lagrangian in $X$. Moreover in many cases the condition can be weakened, using some approximations so that it may suffice to consider $W_X$. 
Indeed, Fukaya, Oh, Ohta and Ono show the existence of a non-displaceable Lagrangian in $X_P$ by constructing a particular point $u_0$ in $P$ which they show is the valuation of a critical point of $W_X$. This point $u_0$ turns out to agree precisely with the tropical critical point $\crit$, namely we have.

\begin{lemma}\label{l:u0} 
The tropical critical point $\crit $ of $W_X$ from \eqref{e:WX} agrees with the special point $u_0$ of the polytope $P$ constructed in   \cite[Proposition~9.1]{FOOO:I}, see also \cite {McDuff:probes}. In particular $u_0$ is the unique point of $N_\R$ arising as the valuation of a critical point in $T(\KK_{>0})$ of $W_X$. 
\end{lemma}

\begin{proof}
The critical point of $W_X$ with valuation $u_0$ constructed in \cite{FOOO:I} has positive leading coefficient, see \cite[Lemma~9.15]{FOOO:I}. Therefore by our uniqueness result, Proposition~\ref{p:uniqueness}, we see that it must agree with our positive critical point, and hence $u_0=\crit$.  
It is also possible to directly compare the construction of $u_0$ in $P$ from \cite[Proposition~9.1]{FOOO:I} to our construction of $\crit$  from Section~\ref{s:critconstr}. Namely the two constructions can be related by a version of Lemma~\ref{l:critmax}  adapted to each recursion step and proved  using the ingredients of Lemmas~\ref{l:max} and \ref{l:tauattainment} in the same way as Lemma~\ref{l:critmax}.
\end{proof}

As a consequence we have the following corollary which is {\cite[Theorem 1.5]{FOOO:I}} with $u_0$ replaced by $\crit$. 
\begin{cor} \label{c:symplecticsmooth} The canonical Lagrangian torus fiber $L_X:=\mu_X\inv(\crit)$ is a non-displaceable Lagrangian in $X$. Moreover if $\psi$ is a Hamiltonian diffeomorphism of $X$ such that  $L_X$ and $\psi(L_X)$ are transversal, then 
\[\# (\psi(L_X)\cap L_X)\ge 2^r,
\]
where $r$ is the dimension of $L_X$.
\end{cor}

\subsection{Symplectic toric orbifolds} Suppose now that $X$ is a quasiprojective toric orbifold, see \cite{Woodward:11}. Namely $X$ can be constructed as a symplectic quotient as follows. Suppose $G\subset (U(1))^n$ is a subtorus acting on $\C^n$ and the associated moment map $\Phi:\C^n\to \mathfrak g^*$ is such that $\Phi\inv(0)$ only has finite stabilisers as $G$-space. Then $X:=\C^n/G$ is a quasi-projective toric orbifold for the action of the residual torus $T^\vee_c= (U(1))^n/G$. Here as before we consider $T^\vee_c$ to be the compact torus whose Lie algebra we identify with $M_\R$, and we denote by $\mu_X: X\to N_\R$ the moment map associated to the action of  $T^\vee_c$ on $X$. 
In this setting Woodward constructs an $A_{\infty}$ algebra  associated to a Lagrangian torus fiber of $X$ (via its lift from $X$ to $\C^n$) which he calls the `gauged' or `equivariant Fukaya algebra' and which relates to a `quasimaps' version of Floer cohomology which can detect non-displaceability of Lagrangians. Moreover he constructs out of his $A_\infty$ algebra a `gauged potential function' $W_X$, which can be described combinatorially as sum of $n$ terms in an analogous way to \eqref{e:WX}, see \cite[Corollary 6.4]{Woodward:11}, and again determines the moment polytope $P$. Woodward then shows that if a point $d$ of the moment polytope $P$ of $X$ is the valuation of a critical point of $W_X$, then the associated Lagrangian torus fiber $L=\mu_X\inv(d)$ is  non-displaceable by any Hamiltonian diffeomorphism $\psi$, with lower bound on the number of intersection points in $L\cap\psi(L)$ whenever the intersection is transversal.
Combining these results with our Theorems~\ref{t:main} and \ref{t:critmax} we obtain the following corollary. 
      
\begin{cor}\label{c:symplecticorbifold} Let $X$ be a compact toric symplectic orbifold  of (real) dimension $2 r$. Then the gauged potential function $W_X$ of $X$ has a unique positive critical point $\pcrit$. Moreover the associated tropical critical point $\crit$ is a canonical point in the moment polytope  $P$ of $X$.

Denote the moment map fiber of $\crit$ by $L_X$. Then $L_X$ is a non-displaceable Lagrangian torus with the property that if $\psi$ is any Hamiltonian diffeomorphism of $X$ for which $L_X$ and $\psi(L_X)$ are transversal, then 
\[\# (\psi(L_X)\cap L_X)\ge 2^r.
\]
\end{cor}
 We think of $L_X$ as a canonical choice of `central' Lagrangian torus fiber $L_X=\mu_X\inv(\crit)$ in any toric orbifold $X$. 

\begin{proof}[{Proof of Corollary~\ref{c:symplecticorbifold}}]
  $W_X$ is clearly a positive Laurent polynomial. If $X$ is compact then the associated fan is complete and hence $W_X$ is also complete.  Thus the existence and uniqueness of $\pcrit$ follows from Theorem~\ref{t:main}. That $\crit$ lies in the moment polytope $P$ follows from Theorem~\ref{t:critmax}. The non-displaceability of $L_X$ follows from \cite[Theorem 1.1]{Woodward:11}, with the lower bound on intersection points shown in \cite[Proposition~6.7]{Woodward:11}
\end{proof}

\section{Mutations and extension from tori to cluster varieties}\label{s:mutations}

The theory of cluster algebras was introduced by Fomin and Zelevinsky~\cite{FZ1,FZ2,FZ3,FZ4} as a way to try to understand total positivity and the dual canonical basis of Lusztig.  Associated \emph{cluster varieties} give rise to an interesting generalisation of toric varieties. Here a single torus is replaced by multiple tori (called cluster tori), with fixed coordinates (constituting together with the cluster torus a \emph {cluster seed}). These are glued together along prescribed birational maps, called mutations.  A major development in the theory of cluster algebras was the duality conjectures of Fock and Goncharov \cite{FG}, and the introduction of the `theta basis' by Gross, Hacking, Keel and Kontsevich \cite{GHKK}. Moreover \cite{GHKK} use the theta basis to construct a kind of superpotential associated to a compactification of an affine cluster variety in the case that the Fock-Goncharov conjectures hold. Namely this \emph{GHKK-superpotential} is a sum of theta-functions on the dual cluster variety, and in particular gives rise to a Laurent polynomial on every cluster torus.  Another setting where mutations of a more general type have appeared is in the general setting of Fano mirror symmetry via Laurent polynomials, see \cite{ACGK:mutations, CKPT}. 

\begin{defn}\label{d:posbirat}
Suppose $\phi:T^{(1)}\dashrightarrow T^{(2)}$ is a positive rational map between two tori over $\KK$, as in Section~\ref{s:tptropicalisation}. We call $\phi$ \emph{positively birational} if it has a positive rational inverse $\phi\inv:T^{(2)}\dashrightarrow T^{(1)}$. 
\end{defn}

We note that all cluster mutations are positively birational maps. In this section we show that our positive critical point is preserved by any positively birational map of tori under which $W$ stays positive and Laurent. Thus in the setting of cluster varieties, by the `positive Laurent phenomenon' \cite{LeeSchiffler,GHKK}, the positive critical point is well-defined independently of a choice of `seed'. 

\begin{prop}  Suppose $\phi:T^{(1)}\dashrightarrow T^{(2)}$ is a positively birational map between two tori over $\KK$, as in Definition~\ref{d:posbirat}. Let $W: T^{(2)}\to\KK$ be a positive Laurent polynomial such that  $\phi^*(W)$ is a positive Laurent polynomial on $T^{(1)}$. Assume furthermore that $W$ is complete.
Then we have that 
\begin{enumerate}
\item
 $\phi^*(W)$ is complete. 
 \item
If  $\pcrit'$  is a unique positive critical point of  $\phi^*(W)$ from Theorem~\ref{t:main}, then  $\phi(\pcrit')\in T^{(2)}(\KK_{>0})$ is the unique positive critical point $\pcrit$ of $W$.
\item
The tropical critical points ${\crit}=[\pcrit]$ and ${\crit'}=[\pcrit']$, of $W$ and $\phi^*(W)$, respectively, are related by  $\Trop(\phi)(\crit')=\crit$.   
\end{enumerate}
\end{prop}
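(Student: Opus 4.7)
The plan is to prove the three assertions in the order (1), (2), (3), since part (2) invokes Theorem~\ref{t:main} applied to $\phi^*(W)$ and so requires the completeness asserted in part (1).

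For part (1), I would first record that, by functoriality of $\Trop$, we have $\Trop(\phi^*(W)) = \Trop(W) \circ \Trop(\phi)$, and that positive birationality of $\phi$ makes $\Trop(\phi): N^{(1)}_\R \to N^{(2)}_\R$ a PL bijection (with inverse $\Trop(\phi\inv)$), in particular a homeomorphism and hence a proper map. The key lemma to prove is a coercivity characterisation of completeness: a positive Laurent polynomial $W'$ is complete iff $\Trop(W')$ is coercive, i.e.\ $\Trop(W')(d) \to -\infty$ as $|d| \to \infty$. The nontrivial direction (completeness implies coercivity) uses the interior condition $0 \in \Newton(W')^\circ$ together with a compactness argument on the unit sphere of $N_\R$: for each unit $u$ the finite maximum $\max_i(-\langle v'_i, u\rangle)$ is strictly positive and depends continuously on $u$, so has a positive lower bound $\epsilon$, giving a uniform linear decay rate $\Trop(W')(d) \le \text{const} - \epsilon|d|$. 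The converse direction is easy: a coercive tropicalisation forces $\Newton(W')$ to be full-dimensional with $0$ in its interior. Given this characterisation, $\Trop(W)$ is coercive, $\Trop(\phi)$ is proper, and the composition $\Trop(\phi^*(W))$ is coercive, whence $\phi^*(W)$ is complete.

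With (1) in hand, part (2) is essentially a chain-rule computation. Theorem~\ref{t:main} produces the unique positive critical point $\pcrit' \in T^{(1)}(\KK_{>0})$ of $\phi^*(W)$. Since $\phi$ and $\phi\inv$ are positive rational maps inverse to each other as bijections $T^{(1)}(\KK_{>0}) \leftrightarrow T^{(2)}(\KK_{>0})$, differentiating $\phi\inv\circ\phi = \mathrm{id}$ shows that the differential of $\phi$ is invertible at every positive point. So critical points of $\phi^*(W) = W \circ \phi$ in $T^{(1)}(\KK_{>0})$ correspond bijectively under $\phi$ to critical points of $W$ in $T^{(2)}(\KK_{>0})$, and in particular $\phi(\pcrit')$ is a positive critical point of $W$. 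By uniqueness in Theorem~\ref{t:main} it coincides with $\pcrit$.

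Part (3) is then immediate from Definition~\ref{d:tropmap}: by construction $\Trop(\phi)$ is the map induced by $\phi$ on equivalence classes, so $\Trop(\phi)(\crit') = \Trop(\phi)([\pcrit']) = [\phi(\pcrit')] = [\pcrit] = \crit$, under the identification $\Trop(T^{(i)}) \cong N^{(i)}_\R$ of Definition~\ref{d:TropN}. The main obstacle I anticipate is the coercivity characterisation in part (1); the compactness argument extracting a uniform decay rate from the interior condition is the one place where the proof uses more than pure functoriality and uniqueness statements already established in the paper. Parts (2) and (3) are then essentially formal consequences of functoriality of $\Trop$ combined with Theorem~\ref{t:main}.
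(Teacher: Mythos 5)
Your proof is correct, but for part (1) it takes a genuinely different route from the paper. The paper never argues tropically at all: it first pulls the unique positive critical point $\pcrit$ of $W$ back through $\phi\inv$ to obtain a positive critical point of $\phi^*(W)$, and then deduces completeness of $\phi^*(W)$ from the mere existence of that point via two contradiction lemmas — if $\Newton(\phi^*(W))$ were not full-dimensional, a positive critical point would sit in a $1$-parameter family of positive critical points (Lemma~\ref{l:contra2}), whose images under $\phi$ would violate uniqueness for $W$; and a positive Laurent polynomial with full-dimensional Newton polytope admitting a positive critical point must have $0$ in the interior (Lemma~\ref{l:contra1}). You instead prove (1) before ever invoking critical points, via the characterisation ``complete $\iff$ $\Trop$ is coercive'', functoriality $\Trop(\phi^*(W))=\Trop(W)\circ\Trop(\phi)$ (which needs the observation that for a \emph{positive} Laurent polynomial the formula-tropicalisation agrees with the valuation of values at positive points, since leading coefficients cannot cancel), and properness of the PL homeomorphism $\Trop(\phi)$; your sketch of both directions of the coercivity lemma is sound, the separation/compactness arguments being exactly what is needed. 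Your parts (2) and (3) then coincide in substance with the paper's chain-rule-plus-uniqueness argument and the definition of $\Trop(\phi)$, only with the logical order reversed (you get $\pcrit'$ from Theorem~\ref{t:main} applied to $\phi^*(W)$ and push it forward, rather than pulling $\pcrit$ back). The trade-off: your route is more self-contained on the convex-geometric side and arguably more direct for (1), while the paper's route produces Lemmas~\ref{l:contra1} and \ref{l:contra2} as reusable tools, which it immediately exploits to obtain the converse statement in Corollary~\ref{c:converse}; your coercivity lemma would serve the same purpose, so nothing essential is lost either way.
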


Let us first prove the following two Lemmas. 

\begin{lemma}\label{l:contra1}
Suppose $W$ is a positive Laurent polynomial on the torus $T$ over $\KK$ with full-dimensional Newton polytope. If $W$ has a critical point in $T(\KK_{>0})$ then the Newton polytope of $W$ must have $0$ in its interior, i.e. then $W$ is complete. 
\end{lemma}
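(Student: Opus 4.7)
The plan is to prove the contrapositive: if $0$ does not lie in the interior of $\Newton(W)$, then $W$ admits no critical point in $T(\KK_{>0})$. The strategy is to exhibit a direction $u \in N$ for which $\partial_u W(p)$ is a strictly positive element of $\KK$ at every $p \in T(\KK_{>0})$, thereby ruling out $p$ being a critical point.

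First, I would produce this $u$ by convex separation. Since $\Newton(W) = \ConvexHull(\{v_i\})$ is a rational polytope whose interior misses $0$, there exists a supporting hyperplane of $\Newton(W)$, rational and hence with primitive integer normal $u \in N$, such that $\langle v_i, u\rangle \ge 0$ for every $i$. When $0$ lies outside $\Newton(W)$ the hyperplane can be chosen to separate $0$ strictly, giving $\langle v_i, u\rangle > 0$ for all $i$. When $0$ lies on the boundary, I would take $u$ to be the primitive inward normal to a facet of $\Newton(W)$ containing $0$; some $\langle v_i, u\rangle$ then vanish, but since $\Newton(W)$ is full-dimensional the $v_i$ cannot all lie on a single facet, so at least one $\langle v_i, u\rangle > 0$.

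Given such a $u$, the derivative takes the form
\[
\partial_u W(p) \;=\; \sum_{i=1}^n \gamma_i \, \langle v_i, u\rangle \, p^{v_i},
\]
and each term with $\langle v_i, u\rangle > 0$ is a product of elements of $\KK_{>0}$ with a positive real scalar, hence lies in $\KK_{>0}$; the terms with $\langle v_i, u\rangle = 0$ drop out. Since $\KK_{>0}$ is closed under addition and at least one summand survives strictly positively, $\partial_u W(p) \in \KK_{>0}$, and in particular is nonzero. This contradicts $p$ being a critical point. The only step requiring a moment's care is the convex-separation argument producing a rational normal $u$, which follows from $\Newton(W)$ being a rational polytope; I do not anticipate any serious obstacle.
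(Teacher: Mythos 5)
Your argument is correct and is essentially the paper's own proof: both find a primitive $u\in N$ with $\langle v_i,u\rangle\ge 0$ for all $i$ (with at least one strict inequality, forced by full-dimensionality), and conclude that $\partial_u W(p)=\sum_i\gamma_i\langle v_i,u\rangle p^{v_i}$ lies in $\KK_{>0}$, contradicting criticality. The extra care you take over the rationality of the separating normal and the case distinction (zero outside versus on the boundary of $\Newton(W)$) only fleshes out the step the paper states without proof.
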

\begin{proof}
Suppose indirectly that $W$ has a positive critical point $p$, but $W$ is not complete. In this case we can find a primitive element $u_0\in N$ 
 such that the Newton polytope $\Newton(W)$ lies entirely in the nonnegative half-space  $H_{\ge 0}=\{v\in M_\R\mid \langle v ,u_0\rangle\ge 0 \}$ of $M_{\R}$.
Since  $p$ is a critical point, the logarithmic derivatives $\partial_u W$ vanish at $p$. Therefore in particular we obtain
\[
\partial_{u_0}W (p)=\sum_i \gamma_i p^{v_i}\langle v_i,u_0\rangle=0.
\] 
However $\langle v_i,u_0\rangle \ge 0$ for all $i$ since $v_i$ lies in the Newton polytope which lies in $H_{\ge 0}$. Moreover since $\Newton(W)$ is full-dimensional, there exists a $v_i$ with $\langle v_i,u_0\rangle >0$, that is, strictly bigger than $0$. On the other hand each $p^{v_i}$ lies in $\KK_{>0}$, and the $\gamma_i$ lie in $\KK_{>0}$. 
It follows that  
$ \sum \gamma_i p^{v_i}\langle v_i,u_0\rangle \in \KK_{>0}$, given that at least one summand is non-vanishing and all leading terms have $\ge 0 $ coefficients. As a result we obtain a contradiction to the critical point equation $\partial_{u_0}W (p)=0$.
 \end{proof}

\begin{lemma}\label{l:contra2}
Suppose $W$ is a positive Laurent polynomial on the torus $T$ over $\KK$ whose Newton polytope is not full-dimensional. If $W$ has a critical point in $T(\KK_{>0})$ then $W$ has a $1$-parameter family of critical points in $T(\KK_{>0})$. 
\end{lemma}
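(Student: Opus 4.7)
My plan is to exploit a symmetry of $W$ coming from the failure of full-dimensionality of $\Newton(W)$ to construct, out of a single positive critical point $p$, an entire $1$-parameter family of them.

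First, since $\Newton(W)=\ConvexHull(\{v_1,\dots,v_n\})$ is not full-dimensional, its affine hull is contained in a proper affine subspace of $M_\R$. Hence there exists a nonzero $u\in N_\R$ which annihilates the direction space of this affine hull, i.e.\ such that $\langle v_i, u\rangle = \langle v_j, u\rangle$ for all $i,j$. Let $c\in\R$ denote the common value $\langle v_i,u\rangle$.

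Next, I use the $1$-parameter subgroup $\{t^{su}\mid s\in\R\}\subset T(\KK_{>0})$ of Section~\ref{s:exp}, where $\chi^v(t^{su})=t^{s\langle v,u\rangle}$, to construct the family. For any $s\in\R$, set $p_s := t^{su}\cdot p \in T(\KK_{>0})$. The key calculation, which is immediate from the defining property of $t^{su}$ and the fact that $\langle v_i,u\rangle=c$ for every $i$, is
\[
\partial_{u'}W(p_s)=\sum_{i=1}^n\gamma_i\langle v_i,u'\rangle\,\chi^{v_i}(t^{su}p)=\sum_{i=1}^n\gamma_i\langle v_i,u'\rangle\,t^{sc}\,p^{v_i}=t^{sc}\,\partial_{u'}W(p),
\]
for every $u'\in N$. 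Hence if $p$ is a critical point of $W$, so is $p_s$ for every $s\in\R$.

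Finally, these $p_s$ are pairwise distinct elements of $T(\KK_{>0})$: since $u\neq 0$, the valuations $\Val(p_s)=su+\Val(p)\in N_\R$ take distinct values as $s$ varies over $\R$. This exhibits the required $1$-parameter family. I do not expect any real obstacle; the only point requiring a moment's care is the choice of $u$, which uses nothing more than basic linear algebra applied to the affine hull of $\{v_i\}$, and the fact that $u$ may be taken in $N_\R$ (not necessarily in $N$) is harmless because the family $t^{su}$ is defined for all $u\in N_\R$ by Definition~\ref{d:expm}.
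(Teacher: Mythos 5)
Your proof is correct and follows essentially the same route as the paper: multiply the given positive critical point by a one-parameter subgroup of $T(\KK_{>0})$ in a direction $u$ that annihilates the edge directions of $\Newton(W)$, so that each critical-point equation $\partial_{u'}W(p_s)=0$ is the original one rescaled by a common invertible factor. The only differences are cosmetic — the paper takes the family $p_0e^{su_0}$ with $\langle v_i,u_0\rangle=0$ (so the members differ in their leading coefficients), while you take $t^{su}p$ with $\langle v_i,u\rangle$ merely constant (so they differ in their valuations), which also quietly covers the situation where the affine hull of the $v_i$ does not pass through the origin.
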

\begin{proof}
Let $u_0\in N$ be chosen so that the Newton polytope of $W$ is contained in the affine hyperplane $H_c:=\{v\mid \langle v, u_0\rangle=c\}$.  Let $p_0$ be a critical point of $W$ in $T(\KK_{>0})$. Then for any $s\in\R$ we have that $p_s:=p_0 e^{s u_0} $ is another critical point of $W$ inside $T(\KK_{>0})$. Namely, for $W=\sum_i\gamma_i x^{v_i}$ and for any $u\in N$,
\[
\partial_u W(p_s)=\sum_i \langle v_i,u\rangle \gamma_i p_0^{v_i} e^{s \langle v_i,u_0\rangle}=e^{sc}\sum_i \langle v_i,u\rangle \gamma_i p_0^{v_i} =e^{sc} \partial_u W(p_0)=0.
\]
Here we used first that $\langle v_i,u_0\rangle=c$, since $v_i$ is a vertex of $\Newton(W)$ and thus lies in $H_c$, and then we used that $p_0$ is a critical point for $W$.   
\end{proof}

\begin{proof}[Proof of the Proposition]
Since $\phi$ is birational we can remove a divisor $D^{(1)}$ from $T^{(1)}$ and $D^{(2)}$ from $T^{(2)}$ and obtain an isomorphism, which we still call $\phi$,   
\begin{equation*}
\begin{tikzcd}  T^{(1)}\setminus D^{(1)} 
\arrow{r}{\phi} \ar{dr}[swap]{\phi^*(W)} &T^{(2)}\setminus D^{(2)}\arrow{d}{W}\\
& \KK.
\end{tikzcd}
\end{equation*}
Consider the corresponding algebra isomorphism
$\phi^*: A^{(2)}=\KK[T^{(2)}\setminus D^{(2)}]\to A^{(1)}=\KK[T^{(1)}\setminus D^{(1)}]$. If $\partial^{(1)}$ is a $\KK$-derivation of $A^{(1)}$ then $(\phi^*)\inv\circ\partial^{(1)}\circ \phi^*$ is a $\KK$-derivation of $A^{(2)}$ and vice versa. 
Moreover the derivations of $A^{(i)}$ are generated over $A^{(i)}$ by the $\partial_u$ where $u\in N^{(i)}$. Thus we see that the property of a point $p$ being a critical point  of $W$ is equivalent to the condition $\partial^{(2)} W(p)=0$ for all $\KK$-derivations $\partial^{(2)}$ of $A^{(2)}$, and analogously for $\phi^*(W)$ and $A^{(1)}$. 

Now observe that if $p$ is a critical point of $W$ lying in $T^{(2)}\setminus D^{(2)}$, then $\phi\inv(p)$ is a critical point for $\phi^*(W)$. Namely for any $\KK$-derivation $\partial^{(1)}$ of $A^{(1)}$ we have that $(\phi^*)\inv\circ\partial^{(1)}\circ \phi^* $ is a $\KK$-derivation of $A^{(2)}$, so that
\[
0=[(\phi^*)\inv\circ \partial^{(1)}\circ \phi^*] (W) (p)=
\partial^{(1)} ( \phi^*(W)) (\phi\inv(p)),
\]  
and thus $\phi\inv(p)$ is a critical point of $\phi^*(W)$. Clearly this observation is true for any regular map $W$ with no assumption on Laurentness, completeness or positivity. If we consider $\phi\inv$ in place of $\phi$, then for any critical point $p'$ of $\phi^*(W)$ we have $\phi(p')$ is a critical point of $W$. 

Now assume that $W$ is a positive, complete  Laurent polynomial. Then by Theorem~\ref{t:main} there is  a unique positive critical point $\pcrit$. 
Since $\phi$ is positively birational it restricts to a well-defined bijection $\phi_{>0}:T^{(1)}(\KK_{>0})\to T^{(2)}(\KK_{>0})$, and we can assume that $T^{(1)}(\KK_{>0})\subset T^{(1)}\setminus D^{(1)}$ and $T^{(2)}(\KK_{>0})\subset T^{(2)}\setminus D^{(2)}$.  
Since $W$ and $\phi^*(W)$ are both positive Laurent polynomials, we also have  the following commutative diagram obtained by restriction to the positive parts
\begin{equation*}
\begin{tikzcd}  T^{(1)}(\KK_{>0})
\arrow{r}{\phi_{>0}} \ar{dr}[swap]{\phi^*(W)} &T^{(2)}(\KK_{>0})\arrow{d}{W}\\
& \KK_{>0}.
\end{tikzcd}
\end{equation*}
By the previous argument, we have that $\phi_{>0}\inv(\pcrit)=\phi\inv(\pcrit)$ is a positive critical point of $\phi^*(W)$. We have by assumption that $\phi^*(W)$ is a positive Laurent polynomial. Let us now prove it is complete. Suppose indirectly its Newton polytope does not have full dimension. Then since
$\phi^*(W)$ has a positive critical point, by Lemma~\ref{l:contra2} it must have infinitely many positive critical points. However these critical points are mapped by $\phi_{>0}$ to distinct positive critical points of $W$, which contradicts the fact (Theorem~\ref{t:main}) that the positive critical point of $W$ is unique. Thus we see that the Newton polytope of $\phi^*(W)$ must be full-dimensional. 

Now we are in the situation of Lemma~\ref{l:contra1}. Since $\phi^*(W)$ is a positive Laurent polynomial with full-dimensional Newton polytope, and since  $\phi^*(W)$ possesses a positive critical point, it must follow that the Newton polytope of $\phi^*(W)$ contains $0$ in its interior. Hence we have shown that  $\phi^*(W)$ is complete. Clearly, if $\pcrit'$ is the unique critical point of  $\phi^*(W)$ then $\phi(\pcrit')=\pcrit$. Therefore (1) and (2) are proved.

Finally, (3) follows from (2) and the definitions in Section~\ref{s:tptropicalisation}. 
\end{proof}
 
We remark that the combination of the two lemmas used in this section imply a kind of converse to our main theorem. Namely we have the following corollary.  

\begin{cor}\label{c:converse} Suppose $W$ is a positive Laurent polynomial. Then $W$ has a unique positive critical point {\it if and only if} the Newton polytope of $W$ is full-dimensional with $0$ in the interior. \qed
\end{cor}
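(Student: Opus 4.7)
The forward implication is simply a restatement of Theorem~\ref{t:main}, so no work is required there. The substance of the corollary is the converse: assuming that $W$ is a positive Laurent polynomial possessing a unique positive critical point, show that its Newton polytope is full-dimensional with $0$ in the interior. My plan is to prove the contrapositive by splitting into two cases according to how $W$ could fail to be complete.

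For the first case, suppose $\Newton(W)$ fails to be full-dimensional. Then by Lemma~\ref{l:contra2}, any positive critical point of $W$ generates a nontrivial one-parameter family of positive critical points (translating along the one-parameter subgroup $e^{s u_0}$, where $u_0 \in N$ is any vector annihilating the affine span of $\Newton(W)$). In particular, $W$ cannot have a \emph{unique} positive critical point, regardless of whether any exists at all.

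For the second case, suppose $\Newton(W)$ is full-dimensional but $0$ does not lie in its interior. Then Lemma~\ref{l:contra1} applies: if $W$ admitted a positive critical point $p$, then choosing a primitive $u_0 \in N$ so that $\Newton(W)$ lies in the half-space $\{v \mid \langle v, u_0\rangle \ge 0\}$, the derivative $\partial_{u_0}W(p) = \sum_i \gamma_i p^{v_i} \langle v_i, u_0\rangle$ would be a sum of elements of $\KK_{>0}\cup\{0\}$ with at least one strictly positive summand (because of full-dimensionality), hence nonzero, contradicting criticality. Thus $W$ has no positive critical point at all, and again fails to have a unique one.

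Combining the two cases, if $W$ is not complete, then $W$ does not have a unique positive critical point, which establishes the converse. Since both directions now follow directly from results already in hand (Theorem~\ref{t:main}, Lemma~\ref{l:contra1}, Lemma~\ref{l:contra2}), there is no real obstacle; the only subtle point is to be careful in the contrapositive that ``not unique'' covers both the case of no critical points and the case of infinitely many, which is automatic from the two lemmas.
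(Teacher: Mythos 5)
Your proof is correct and follows exactly the paper's intended argument: the forward direction is Theorem~\ref{t:main}, and the converse is the combination of Lemma~\ref{l:contra1} and Lemma~\ref{l:contra2}, which is precisely what the paper indicates just before stating the corollary ("the combination of the two lemmas used in this section"). No gaps; the case split in the contrapositive is handled correctly.
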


\bibliographystyle{amsalpha}
\bibliography{biblio}

\def\cprime{$'$}
\providecommand{\bysame}{\leavevmode\hbox to3em{\hrulefill}\thinspace}
\providecommand{\MR}{\relax\ifhmode\unskip\space\fi MR }
\providecommand{\MRhref}[2]{%
  \href{http://www.ams.org/mathscinet-getitem?mr=#1}{#2}
}
\providecommand{\href}[2]{#2}
\begin{thebibliography}{BCFKvS00}

\bibitem[ACGK12]{ACGK:mutations}
Mohammad Akhtar, Tom Coates, Sergey Galkin, and Alexander~M. Kasprzyk,
  \emph{Minkowski polynomials and mutations}, SIGMA Symmetry Integrability
  Geom. Methods Appl. \textbf{8} (2012), Paper 094, 17. \MR{3007265}

\bibitem[AM13]{AbreuMacarini:TAMS}
Miguel Abreu and Leonardo Macarini, \emph{Remarks on {L}agrangian intersections
  in toric manifolds}, Trans. Amer. Math. Soc. \textbf{365} (2013), no.~7,
  3851--3875. \MR{3042606}

\bibitem[Bat93]{Batyrev:QCoh}
Victor~V. Batyrev, \emph{Quantum cohomology rings of toric manifolds},
  Journ\'ees de g\'eom\'etrie alg\'ebrique d'Orsay - Juillet 1992,
  Ast\'erisque, no. 218, Soci\'et\'e math\'ematique de France, 1993, pp.~9--34
  (en).

\bibitem[BCFKvS00]{BCFKvSPF}
Victor~V. Batyrev, Ionu{\c{t}} Ciocan-Fontanine, Bumsig Kim, and Duco van
  Straten, \emph{Mirror symmetry and toric degenerations of partial flag
  manifolds}, Acta Math. \textbf{184} (2000), no.~1, 1--39. \MR{1756568
  (2001f:14077)}

\bibitem[BEP04]{BiranEntovPolterovich:04}
Paul Biran, Michael Entov, and Leonid Polterovich, \emph{Calabi quasimorphisms
  for the symplectic ball}, Commun. Contemp. Math. \textbf{6} (2004), no.~5,
  793--802. \MR{2100764}

\bibitem[BFZ05]{FZ3}
Arkady Berenstein, Sergey Fomin, and Andrei Zelevinsky, \emph{Cluster algebras.
  {III}. {U}pper bounds and double {B}ruhat cells}, Duke Math. J. \textbf{126}
  (2005), no.~1, 1--52. \MR{2110627 (2005i:16065)}

\bibitem[BK07]{BeKa}
Arkady Berenstein and David Kazhdan, \emph{Lecture notes on geometric crystals
  and their combinatorial analogues}, Combinatorial aspect of integrable
  systems, MSJ Mem., vol.~17, Math. Soc. Japan, Tokyo, 2007, pp.~1--9.
  \MR{2269125 (2008m:17031)}

\bibitem[CdS01]{Canasdasilva:notes}
Ana Canas~da Silva, \emph{{Symplectic toric manifolds}},
  {https://people.math.ethz.ch/~acannas/Papers/toric.pdf}, 2001.

\bibitem[CKPT21]{CKPT}
Tom Coates, Alexander~M. Kasprzyk, Giuseppe Pitton, and Ketil Tveiten,
  \emph{Maximally mutable {L}aurent polynomials}, Proc. A. \textbf{477} (2021),
  no.~2254, Paper No. 20210584, 21. \MR{4340449}

\bibitem[FG09]{FG}
Vladimir~V. Fock and Alexander~B. Goncharov, \emph{Cluster ensembles,
  quantization and the dilogarithm}, Ann. Sci. \'Ec. Norm. Sup\'er. (4)
  \textbf{42} (2009), no.~6, 865--930. \MR{2567745 (2011f:53202)}

\bibitem[FOOO10]{FOOO:I}
Kenji Fukaya, Yong-Geun Oh, Hiroshi Ohta, and Kaoru Ono, \emph{{Lagrangian
  Floer theory on compact toric manifolds, I}}, Duke Math. J. \textbf{151}
  (2010).

\bibitem[FOOO11]{FOOO:II}
\bysame, \emph{{Lagrangian Floer theory on compact toric manifolds, II : bulk
  deformations}}, Selecta Math. \textbf{17} (2011).

\bibitem[FOOO12]{FOOO:survey}
\bysame, \emph{Lagrangian {F}loer theory on compact toric manifolds: survey},
  Surveys in differential geometry. {V}ol. {XVII}, Surv. Differ. Geom.,
  vol.~17, Int. Press, Boston, MA, 2012, pp.~229--298. \MR{3076063}

\bibitem[Ful93]{Fulton:toricbook}
W.~Fulton, \emph{Introduction to toric varieties}, Princeton University Press,
  1993.

\bibitem[FZ02]{FZ1}
Sergey Fomin and Andrei Zelevinsky, \emph{Cluster algebras. {I}.
  {F}oundations}, J. Amer. Math. Soc. \textbf{15} (2002), no.~2, 497--529
  (electronic). \MR{1887642 (2003f:16050)}

\bibitem[FZ03]{FZ2}
\bysame, \emph{Cluster algebras. {II}. {F}inite type classification}, Invent.
  Math. \textbf{154} (2003), no.~1, 63--121. \MR{2004457 (2004m:17011)}

\bibitem[FZ07]{FZ4}
\bysame, \emph{Cluster algebras. {IV}. {C}oefficients}, Compos. Math.
  \textbf{143} (2007), no.~1, 112--164. \MR{2295199 (2008d:16049)}

\bibitem[Gal14]{Galkin}
S.~Galkin, \emph{{The conifold point}}, { arXiv:1404.7388 [math.AG]}, 2014.

\bibitem[GHKK18]{GHKK}
Mark {Gross}, Paul {Hacking}, Sean {Keel}, and Maxim {Kontsevich},
  \emph{Canonical bases for cluster algebras}, J. Amer. Math. Soc. \textbf{31}
  (2018), no.~2, 497--608.

\bibitem[Giv97]{Givental:QToda}
Alexander~B. Givental, \emph{Stationary phase integrals, quantum {T}oda
  lattices, flag manifolds and the mirror conjecture}, Topics in singularity
  theory, Amer. Math. Soc. Transl. Ser. 2, vol. 180, Amer. Math. Soc.,
  Providence, RI, 1997, pp.~103--115. \MR{1767115 (2001d:14063)}

\bibitem[Giv98]{Givental:Toric}
\bysame, \emph{A mirror theorem for toric complete intersections}, Topological
  field theory, primitive forms and related topics ({K}yoto, 1996), Progr.
  Math., vol. 160, Birkh\"auser Boston, Boston, MA, 1998, pp.~141--175.
  \MR{1653024 (2000a:14063)}

\bibitem[Gui94]{Guillemin:book}
Victor Guillemin, \emph{Moment maps and combinatorial invariants of hamiltonian
  $t^n$-spaces}, Progress in Mathematics, no. 122, Birh\"auser, 1994.

\bibitem[GW19]{GonzalezWoodward:Adv19}
Eduardo Gonz\'{a}lez and Chris~T. Woodward, \emph{Quantum cohomology and toric
  minimal model programs}, Adv. Math. \textbf{353} (2019), 591--646.
  \MR{3986375}

\bibitem[HV00]{Hori-Vafa}
Kentaro Hori and Cumrun Vafa, \emph{{Mirror Symmetry}}, arXiv:hep-th/0002222,
  2000.

\bibitem[Jud18]{Judd:Flag}
Jamie Judd, \emph{Tropical critical points of the superpotential of a flag
  variety}, J. Algebra \textbf{497} (2018), 102--142. \MR{3743177}

\bibitem[KLS19]{KimLeeSanda:19}
Yoosik Kim, Jaeho Lee, and Fumihiko Sanda, \emph{Detecting non-displaceable
  toric fibers on compact toric manifolds via tropicalizations}, Internat. J.
  Math. \textbf{30} (2019), no.~1, 1950003, 40. \MR{3916270}

\bibitem[LS15]{LeeSchiffler}
Kyungyong Lee and Ralf Schiffler, \emph{Positivity for cluster algebras}, Ann.
  of Math. (2) \textbf{182} (2015), no.~1, 73--125. \MR{3374957}

\bibitem[Lus94]{Lusztig94}
G.~Lusztig, \emph{Total positivity in reductive groups}, Lie theory and
  geometry, Progr. Math., vol. 123, Birkh\"auser Boston, Boston, MA, 1994,
  pp.~531--568. \MR{1327548 (96m:20071)}

\bibitem[Mac12]{Maclagan:intropaper}
Diane Maclagan, \emph{Introduction to tropical algebraic geometry}, Tropical
  geometry and integrable systems, Contemp. Math., vol. 580, Amer. Math. Soc.,
  Providence, RI, 2012, pp.~1--19. \MR{2985384}

\bibitem[Mar10]{Markwig:genPuiseux}
T.~Markwig, \emph{A field of generalised {P}uiseux series for tropical
  geometry}, Rend. Semin. Mat. Univ. Politec. Torino \textbf{68} (2010), no.~1,
  79--92. \MR{2759691}

\bibitem[McD11]{McDuff:probes}
Dusa McDuff, \emph{Displacing {L}agrangian toric fibers via probes},
  Low-dimensional and symplectic topology, Proc. Sympos. Pure Math., vol.~82,
  Amer. Math. Soc., Providence, RI, 2011, pp.~131--160. \MR{2768658}

\bibitem[NNU10]{NNU:GC}
Takeo Nishinou, Yuichi Nohara, and Kazushi Ueda, \emph{Toric degenerations of
  gelfand-cetlin systems and potential functions}, Adv. Math. \textbf{224}
  (2010), no.~2, 648–--706.

\bibitem[Rie06]{rietschNagoya}
Konstanze Rietsch, \emph{A mirror construction for the totally nonnegative part
  of the {P}eterson variety}, Nagoya Math. J. \textbf{183} (2006), 105--142.
  \MR{2253887 (2007i:14055)}

\bibitem[{Rie}08]{R}
Konstanze {Rietsch}, \emph{A mirror symmetric construction of {$q
  H_T^*(G/P)_{(q)}$}}, Adv. Math. \textbf{217} (2008), 2401--2442.

\bibitem[RW19]{RW}
Konstanze {Rietsch} and Lauren {Williams}, \emph{{Newton-Okounkov} bodies,
  cluster duality, and mirror symmetry for {Grassmannians}}, Duke Math. J.
  \textbf{168} (2019), no.~18, 3437--3527.

\bibitem[Stu07]{Sturmfels:notes2}
B.~Sturmfels, \emph{{A combinatorial introduction to tropical geometrty:
  Section 2}}, https://math.berkeley.edu/~bernd/tropical/sec2.pdf, 2007.

\bibitem[Woo11]{Woodward:11}
Christopher~T. Woodward, \emph{Gauged {F}loer theory of toric moment fibers},
  Geom. Funct. Anal. \textbf{21} (2011), no.~3, 680--749. \MR{2810861}

\end{thebibliography}

\end{document}